\newtheorem{proposition}{Proposition}[section]
\newtheorem{definition}[proposition]{Definition}
\newtheorem{lemma}[proposition]{Lemma}
\newtheorem{theorem}[proposition]{Theorem}
\newtheorem{corollary}[proposition]{Corollary}
\newtheorem{MainTheorem}{Theorem}
\theoremstyle{definition}
\newtheorem{remark}[proposition]{Remark}
\newtheorem{example}[proposition]{Example}
\DeclareMathOperator{\vol}{vol}
\DeclareMathOperator{\Val}{Val}
\DeclareMathOperator{\Gr}{Gr}
\DeclareMathOperator{\SL}{SL}
\DeclareMathOperator{\SO}{SO}
\renewcommand{\O}{\mathrm{O}}
\DeclareMathOperator{\nc}{nc}
\DeclareMathOperator{\pd}{pd}
\DeclareMathOperator{\Area}{Area}
\DeclareMathOperator{\glob}{glob}
\DeclareMathOperator{\Sym}{Sym}
\newcommand{\R}{\mathbb{R}}
\newcommand{\C}{\mathbb{C}}
\newcommand{\id}{\mathrm{id}}
\newcommand{\sgn}{\mathrm{sgn}}
\DeclareMathOperator{\V}{\mathrm{V}}
\DeclareMathOperator{\Flag}{Flag}
\DeclareMathOperator{\FlagArea}{FlagArea}
\renewcommand{\V}{V}
\newcommand{\largewedge}{\mbox{\Large $\wedge$}}
\newcommand{\tr}{\mathrm{tr}}
\date{\today}
\subjclass[2000]{Primary 52A20 
52B45; 
Secondary 52A39
}
\keywords{}
\title{Additive kinematic formulas for flag area measures}
\author{Judit Abardia-Ev\'equoz} 
\author{Andreas Bernig} 
\address{Institut f\"ur Mathematik, Goethe-Universit\"at Frankfurt am Main, 
Robert-Mayer-Str. 10, 60054 Frankfurt, Germany}
\email{abardia@math.uni-frankfurt.de}
\email{bernig@math.uni-frankfurt.de}
\begin{document}

\date{\today}

\begin{abstract}
We show the existence of additive kinematic formulas for general flag area measures, which generalizes a recent result by Wannerer. Building on previous work by the second named author, we introduce an algebraic framework to compute these formulas explicitly. This is carried out in detail in the case of the incomplete flag manifold consisting of all $(p+1)$-planes containing a unit vector. 
\end{abstract}

\thanks{J. A. was supported by DFG grant AB 584/1-2. A. B. was supported by DFG grant BE 2484/5-2.}
\maketitle

\tableofcontents

\section{Introduction}

\subsection{Global and local kinematic formulas}

Let $V$ be a Euclidean vector space of dimension $n$ and unit sphere $S^{n-1}$. Let $\mathcal{K}(V)$ denote the space of compact convex subsets in $V$. A \emph{valuation} is a finitely additive map on $\mathcal{K}(V)$, that is, a map $\mu:\mathcal{K}(V) \to A$, where $A$ is any abelian semigroup, and such that
\begin{displaymath}
\mu(K \cup L)+\mu(K \cap L)=\mu(K)+\mu(L), 
\end{displaymath} 
whenever $K,L,K \cup L \in \mathcal{K}(V)$. If $A$ is a topological semigroup, then continuity of $\mu$ is understood with respect to the Hausdorff topology on $\mathcal{K}(V)$.

The \emph{intrinsic volumes} $\mu_0,\ldots,\mu_n$ are examples of real valued valuations. They are continuous, translation invariant, and invariant under rotations. Hadwiger has shown that the space $\Val^{\SO(n)}$ of continuous, translation and rotation invariant valuations is spanned by the intrinsic volumes. If $G$ is a subgroup of $\SO(n)$, then $\Val^G$ (the space of continuous, translation invariant and $G$-invariant valuations) is finite-dimensional if and only if $G$ acts transitively on the unit sphere. 

The classification of connected compact Lie groups $G$ acting transitively on some
sphere is a topological problem which was solved by Montgomery-Samelson \cite{montgomery_samelson43} and Borel \cite{borel49}. There are six infinite lists
\begin{displaymath} 
 \mathrm{SO}(n), \mathrm{U}(n/2), \mathrm{SU}(n/2), \mathrm{Sp}(n/4), \mathrm{Sp}(n/4) \cdot \mathrm{U}(1), \mathrm{Sp}(n/4)\cdot \mathrm{Sp}(1),
\end{displaymath}
and three exceptional groups
\begin{displaymath} 
 \mathrm{G}_2, \mathrm{Spin}(7), \mathrm{Spin}(9). 
\end{displaymath}
For the sake of brevity, we will call a group from this list a \emph{transitive group}. The computation of the dimension as well as an explicit geometric description of a basis of $\Val^G$ in each of these cases is a challenging problem and many results have been obtained recently by various authors \cite{alesker_mcullenconj01, bernig_sun09, bernig_g2, bernig_qig, bernig_fu_hig,  bernig_solanes, bernig_solanes17, bernig_voide}.  

Many important constructions in convex geometry can be interpreted as valuations taking values in some abelian semigroup $A$ other than $\R$. We mention briefly two cases. If $A=(\mathcal{K}(V),+)$, where $+$ stands for the Minkowski sum, then $A$-valued valuations are called \emph{Minkowski valuations} \cite{haberl10,ludwig_2005,schuster_wannerer18,schuster_wannerer}. If $A=\mathrm{Sym}^*V$, then $A$-valued valuations are called \emph{tensor valuations} \cite{bernig_hug,  boroczky_domokos_solanes, hug_schneider_localtensor, hug_schneider_schuster_a, hug_schneider_schuster_b,hug_weis17, Kiderlen_Vedel_Jensen}.

Federer's curvature measures $C_0,\ldots,C_n$ are valuations with values in the space of signed measures on $V$. We refer to Schneider \cite{schneider78} for a classification result of these curvature measures. Fu \cite{fu90} and Bernig-Fu-Solanes \cite{bernig_fu_solanes} studied smooth curvature measures in a broader context. For a transitive group $G$, there are local kinematic formulas for translation and $G$-invariant smooth curvature measures. The case $G=\SO(n)$ was known by Federer, whereas the hermitian case $G=\mathrm U(n/2)$ was described completely in \cite{bernig_fu_hig,bernig_fu_solanes, bernig_fu_solanes_proceedings}. 

The classical surface area measures $S_0,\ldots,S_{n-1}$ are valuations with values in the space of signed measures on the unit sphere. Additive kinematic formulas for surface area measures have been shown in \cite{schneider75}. Smooth area measures which are equivariant with respect to a transitive group $G$ were introduced by Wannerer \cite{wannerer_area_measures,wannerer_unitary_module}, who proved the existence of local additive kinematic formulas and established such formulas in the hermitian case. Since these results are very influential for the present work, let us state them explicitly.

\begin{theorem}
Let $G$ be a transitive group. Then the space $\Area^{G}$ of smooth, $G$-invariant area measures is finite-dimensional. If $\Phi_1,\ldots,\Phi_N$ is a basis of $\Area^{G}$, then there are local additive kinematic formulas 
\begin{displaymath}
\int_G \Phi_i(K +gL,\kappa \cap g\lambda)dg=\sum_{k,l} c_{k,l}^i \Phi_k(K,\kappa) \Phi_l(L,\lambda),
\end{displaymath}   
where $K,L \in \mathcal K(V)$ and where $\kappa$ and $\lambda$ are Borel subsets of $S^{n-1}$.
\end{theorem}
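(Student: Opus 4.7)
\medskip

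The plan is to follow the strategy that has become standard in algebraic integral geometry: first realize smooth area measures via translation-invariant differential forms on the sphere bundle $SV=V\times S^{n-1}$, then read off finite-dimensionality from the compact group action, and finally describe the kinematic operator through an integral of pull-backs of these forms along Minkowski sum maps.

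\medskip

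First I would recall that any smooth area measure can be written, modulo a certain explicit kernel, in the form
\begin{displaymath}
\Phi(K,\kappa)=\int_{\nu(K)\cap\pi^{-1}(\kappa)}\omega,
\end{displaymath}
where $\nu(K)\subset SV$ is the normal cycle of $K$, $\pi\colon SV\to S^{n-1}$ is the second projection, and $\omega\in\Omega^{n-1}(SV)^{\mathrm{tr}}$ is a translation-invariant $(n-1)$-form. Since $G$ acts diagonally on $SV$ and commutes with translations, a $G$-invariant smooth area measure is represented (up to the same kernel) by a $G$-invariant form. Writing $SV\cong V\times S^{n-1}$ and using the K\"unneth splitting of translation-invariant forms, the space of such forms is a quotient of $\bigoplus_{k}\Lambda^{n-1-k}V^\ast\otimes\Omega^{k}(S^{n-1})^{G}$. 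Since $G$ is compact and acts transitively on $S^{n-1}$, averaging shows that $\Omega^{k}(S^{n-1})^{G}$ is finite-dimensional for each $k$ (it injects into the finite-dimensional space $\Lambda^k T_{e_1}S^{n-1}{}^\ast$ cut out by the stabilizer $H=\mathrm{Stab}_G(e_1)$). This yields $\dim\Area^G<\infty$.

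\medskip

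Next, to produce the kinematic formulas, I would define the bilinear functional on the right-hand side by
\begin{displaymath}
A_G(\Phi)(K,L;\kappa,\lambda):=\int_G\Phi(K+gL,\kappa\cap g\lambda)\,dg
\end{displaymath}
and show that $A_G(\Phi)\in\Area^G\widehat\otimes\Area^G$. The idea is to rewrite each summand via the form representation: the normal cycle of a Minkowski sum $K+gL$ decomposes, under the map $m\colon SV\times_\pi SV\to SV$, $m(x,\xi;y,\xi)=(x+y,\xi)$, as $\nu(K+gL)=m_\ast(\nu(K)\times_\pi g\nu(L))$. Combined with the observation that $\pi^{-1}(\kappa\cap g\lambda)=\pi^{-1}(\kappa)\cap g\,\pi^{-1}(\lambda)$, the integrand of $A_G(\Phi)$ becomes, after pulling $\omega$ back along $m$, an integral over a fibre product. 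Averaging this pull-back over $G$ yields a translation-invariant, $G\times G$-invariant form on $SV\times_\pi SV$, which, by the same finite-dimensionality argument applied to $G\times G$ acting on $SV\times SV$ via the diagonal on $S^{n-1}$, lies in a tensor product of the finite-dimensional spaces of $G$-invariant forms used to represent $\Area^G$.

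\medskip

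The main obstacle is the last step: passing from an invariant form on $SV\times_\pi SV$ to a genuine element of $\Area^G\otimes\Area^G$, i.e.\ showing that the bilinear functional $A_G(\Phi)$ is, as a function of $(L,\lambda)$ with $(K,\kappa)$ fixed, a finite linear combination of $G$-invariant smooth area measures whose coefficients are themselves smooth $G$-invariant area measures in $(K,\kappa)$. This is the area-measure analogue of the Alesker--Fu fundamental theorem, and I would handle it by introducing a convolution-type product on the space of translation-invariant forms on $SV$ (via $m_\ast$ and fibre integration along $\pi$), checking that this product descends modulo the kernel of the form representation, and then identifying $A_G$ as its $G$-average. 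Continuity in $K,L$ and in the measure variables follows from continuity of the normal cycle and standard Fubini arguments, so the only real work is algebraic: controlling the kernel under $m_\ast$ and fibre integration. Once the kernel is stable, the decomposition into tensors of basis elements $\Phi_k\otimes\Phi_l$ produces the coefficients $c^i_{k,l}$.
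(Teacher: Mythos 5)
Your overall strategy—represent area measures by translation-invariant forms on $SV$, use the decomposition $\nc(K+gL)=m_*\bigl(\nc(K)\times_\pi g\,\nc(L)\bigr)$, average over $G$, and decompose the resulting invariant object—is the same strategy the paper follows (itself adapted from Fu and Wannerer). But the sketch has two genuine problems.

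First, the phrase ``Averaging this pull-back over $G$ yields a translation-invariant, $G\times G$-invariant form on $SV\times_\pi SV$'' does not parse: $G\times G$ does not act on $SV\times_\pi SV$, since the action $(g_1,g_2)\cdot\bigl((x_1,\xi),(x_2,\xi)\bigr)=\bigl((g_1x_1,g_1\xi),(g_2x_2,g_2\xi)\bigr)$ destroys the fiber-product condition $g_1\xi=g_2\xi$ unless $g_1^{-1}g_2$ stabilizes $\xi$. The quantity you want to decompose lives naturally on the total space of the $G$-parametrized family of fiber products, i.e.\ on the space
$E'=\{(g,x_1,\xi_1,x_2,\xi_2,x_3,\xi_3):x_1+gx_2=x_3,\ \xi_1=g\xi_2=\xi_3\}$
together with the two projections $p':E'\to SV\times SV$ (onto the first two pairs) and $q':E'\to G\times SV$ (onto $(g,x_3,\xi_3)$). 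On $SV\times SV$ the group $\bar G\times\bar G$ acts freely, $p'$ and $q'$ are equivariant, and the key object is the fiber-integral $p'_*(q')^*(dg\wedge\omega)$. The decomposition $p'_*(q')^*(dg\wedge\omega)=\sum_{k,l}c_{k,l}\,\omega_k\wedge\omega_l$ in the finite-dimensional space of invariant forms on $SV\times SV$ is what produces the constants $c^i_{k,l}$. In your sketch this step is the entire content of the proof, and it is precisely where the Federer/Fu fiber-integration lemmas (Lemmas~\ref{lemma_push_forward_inverse}, \ref{lemma_sard}, \ref{lemma_fu}) are needed to turn $\int_G\!\int_{\nc(K+gL)\cap\pi^{-1}(\kappa\cap g\lambda)}\omega\,dg$ into $\sum_{k,l}c_{k,l}\Phi_k(K,\kappa)\Phi_l(L,\lambda)$; none of this is carried out.

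Second, you misidentify the ``main obstacle.'' Controlling the kernel of the form representation under $m_*$ and fiber integration is needed to show that a convolution product descends to a well-defined algebra structure on dual area measures (the content of Theorem~\ref{thm_ftaig} in the paper), but it is \emph{not} needed for existence. The kinematic formula is an identity between real numbers, and once a single representative $\omega$ of $\Phi_i$ has been chosen, the coefficients are read off from the decomposition of $p'_*(q')^*(dg\wedge\omega)$; no compatibility with the kernel needs to be checked. Finally, a small issue in the finite-dimensionality paragraph: the space of $G$-invariant translation-invariant forms is $\bigoplus_k\bigl(\Lambda^{n-1-k}V^*\otimes\Omega^k(S^{n-1})\bigr)^G$, with invariants of the \emph{diagonal} $G$-action; this is not a quotient of $\bigoplus_k\Lambda^{n-1-k}V^*\otimes\Omega^k(S^{n-1})^G$. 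Your conclusion is nonetheless correct: by evaluation at a base point, $\bigl(\Lambda^{n-1-k}V^*\otimes\Omega^k(S^{n-1})\bigr)^G\cong\bigl(\Lambda^{n-1-k}V^*\otimes\Lambda^kT^*_{e_1}S^{n-1}\bigr)^H$, which is finite-dimensional.
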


The linear map 
\begin{displaymath}
A:\Area^{G} \to \Area^{G} \otimes \Area^{G}, \ \Phi_i \mapsto \sum c_{k,l}^i \Phi_k \otimes \Phi_l
\end{displaymath}
is a cocommutative, coassociative coproduct. The transposed map $A^*:\Area^{G,*} \otimes \Area^{G,*} \to \Area^{G,*}$ thus provides $\Area^{G,*}$ with the structure of a commutative associative algebra.

In the case $G=\SO(n)$, this algebra is isomorphic to $\R[t]/{\langle t^n\rangle}$. The case $G=\mathrm{U}(n/2)$ is more involved. Using results from hermitian integral geometry \cite{bernig_fu_hig} and from the theory of tensor valuations \cite{bernig_hug}, Wannerer could write down this algebra in an explicit way.
\begin{theorem}
The algebra $\Area^{\mathrm{U}(n/2),*}$ is isomorphic to 
\begin{displaymath}
\R[s,t,v]/I_n,
\end{displaymath}
where $I_n$ is the ideal generated by 
\begin{displaymath}
f_{n/2+1}(s,t),f_{n/2+2}(s,t),p_{n/2}(s,t)-q_{n/2-1}(s,t)v, v^2
\end{displaymath}
with 
\begin{align*}
\log(1+tx+sx^2) & = \sum_{k=0}^\infty f_k(s,t)x^k\\
\frac{1}{1+tx+sx^2} & = \sum_{k=0}^\infty p_k(s,t)x^k\\
-\frac{1}{(1+tx+sx^2)^2} & = \sum_{k=0}^\infty q_k(s,t)x^k.
\end{align*}
\end{theorem}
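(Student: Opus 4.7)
The strategy combines three ingredients: the existence of additive kinematic formulas for area measures (from the previous theorem), the known description of $\Val^{\mathrm{U}(n/2),*}$ from hermitian integral geometry, and the interplay between area measures and tensor valuations that lets one detect the extra degrees of freedom which are invisible to globalization.

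The plan is to first observe that the globalization map $\glob\colon \Area^{\mathrm{U}(n/2)} \to \Val^{\mathrm{U}(n/2)}$ intertwines the additive kinematic coproduct on area measures with the additive kinematic coproduct on valuations. Dualizing, $\glob^*\colon \Val^{\mathrm{U}(n/2),*} \hookrightarrow \Area^{\mathrm{U}(n/2),*}$ is an injective algebra homomorphism. By Fu's and Alesker's description, $\Val^{\mathrm{U}(n/2),*} \cong \R[s,t]/(f_{n/2+1},f_{n/2+2})$, so this embedding already accounts for two of the three generators of the target algebra together with the first two generators of the ideal $I_n$.

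Next I would produce the extra generator $v$ as the dual of a distinguished smooth $\mathrm{U}(n/2)$-invariant area measure that lies in the kernel of $\glob$. Such a measure exists because $\Area^{\mathrm{U}(n/2)}$ is strictly larger than $\Val^{\mathrm{U}(n/2)}$: the former can be identified with $\mathrm{U}(n/2)$-invariant differential forms on the cosphere bundle modulo exact forms and contact multiples, and the latter is the quotient that kills the kernel of the globalization. To pin down the relations satisfied by $v$, I would integrate area measures against monomial tensors on $S^{n-1}$; the resulting map $\Area^{\mathrm{U}(n/2)} \to \mathrm{TVal}^{\mathrm{U}(n/2)}$ into tensor valuations is injective in sufficiently high tensor rank and intertwines the additive kinematic coproducts. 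Comparing the identities in hermitian tensor-valued integral geometry to the putative products $s\cdot v$, $t\cdot v$ and $v^2$ yields, after rearrangement, the remaining relations $p_{n/2}(s,t) = q_{n/2-1}(s,t)\,v$ and $v^2 = 0$; the power series $p_k$ and $q_k$ enter precisely because they encode the action of convolution by the generators of $\Val^{\mathrm{U}(n/2)}$ on unitarily invariant tensor valuations. This gives a surjection $\R[s,t,v]/I_n \twoheadrightarrow \Area^{\mathrm{U}(n/2),*}$.

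Finally I would close the argument by a dimension count: the dimension of $\Area^{\mathrm{U}(n/2)}$ can be computed independently via the first fundamental theorem of invariant theory applied to the $\mathrm{U}(n/2)$-action on the cosphere bundle of $V$, and a combinatorial tally of monomials in $s, t, v$ not killed by $I_n$ gives the same number, so the surjection is an isomorphism. The hardest part will be producing the canonical extra generator $v$ and verifying the non-obvious relation $p_{n/2}(s,t) = q_{n/2-1}(s,t)\,v$; this requires either a template calculation on carefully chosen convex bodies (e.g.\ products of round disks in complex subspaces, where both sides of the kinematic formula can be evaluated explicitly) or a careful tracing of the relations through the embedding into tensor valuations, and neither is a routine calculation.
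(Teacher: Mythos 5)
The paper does not prove this theorem; it is quoted as a background result of Wannerer, with the reference \cite{wannerer_area_measures} (see also \cite{wannerer_unitary_module}). The authors explicitly attribute it: ``Using results from hermitian integral geometry \cite{bernig_fu_hig} and from the theory of tensor valuations \cite{bernig_hug}, Wannerer could write down this algebra in an explicit way.'' There is therefore no ``paper's own proof'' against which your proposal can be compared.

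As a sketch of Wannerer's argument, your outline is on the right track and identifies the correct structural ingredients: the algebra embedding $\glob^*\colon \Val^{\mathrm{U}(n/2),*} \hookrightarrow \Area^{\mathrm{U}(n/2),*}$ dual to the surjective coalgebra morphism $\glob$, the Fu--Alesker presentation $\Val^{\mathrm{U}(n/2),*}\cong \R[s,t]/(f_{n/2+1},f_{n/2+2})$ accounting for the first two generators of $I_n$, the one extra generator $v$ dual to a distinguished area measure in $\ker\glob$, the moment maps to tensor valuations to pin down relations, and a dimension count to close the argument. However, you are candid that the hard content --- constructing $v$ canonically, proving $v^2=0$, and above all establishing the mixed relation $p_{n/2}(s,t)=q_{n/2-1}(s,t)\,v$ --- is left as ``not a routine calculation.'' These computations are exactly what constitutes Wannerer's proof: he works with the first moment map to $\mathrm{Sym}^1$-valued (vector-valued) tensor valuations, exploits the kinematic formulas for hermitian tensor valuations from \cite{bernig_hug}, and carries out explicit template evaluations to fix the constants. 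In other words, your plan is a faithful skeleton of the known proof but leaves its load-bearing steps unexecuted, so it cannot be accepted as a proof in its own right; it is, rather, a correct high-level description of how the known proof proceeds.
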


Although it is possible to write down the local additive kinematic formulas for $G=\mathrm U(n/2)$ using this theorem, the result is not as explicit as one would like. In \cite{bernig_dual_area_measures}, the second named author has shown that the algebra structure on $\Area^{G,*}$ is induced from the algebra structure of a larger (infinite-dimensional) space $\Area^{*,sm}$ of \emph{smooth dual area measures} and that the product in this algebra can be computed in some easy, algorithmic way. This has led to very explicit local additive kinematic formulas.

\subsection{Results of the present paper}

Let $V$ be an $n$-dimensional Euclidean vector space with unit sphere $S^{n-1}$. Let $G$ be a subgroup of $\O(n)$ acting transitively on the sphere $S^{n-1}$ and let $\overline G:=G \ltimes V$. 

Let $H$ be a closed subgroup of $G \cap \O(n-1)$, where $\O(n-1) \subset \O(n)$ is the stabilizer of the first basis vector. Then $M:=G/H$ is a $G$-homogeneous manifold. 

A \emph{smooth flag area measure} is a translation invariant valuation with values in the space of signed measures on $M=G/H$ which is given by a certain smooth differential form on $V \times M$. We denote by $\Area_{G/H}$ the space of smooth flag area measures and by $\Area^G_{G/H}$ the subspace of smooth flag area measures which are equivariant with respect to the action, i.e., 
\begin{displaymath}
 \Phi(gK,g\kappa)=\Phi(K,\kappa), K \in \mathcal{K}(V), \kappa \in \mathcal{B}(M), g \in G.
\end{displaymath}

We refer to Section \ref{sec_existence} for the complete definition. 

Our first two main theorems generalize \cite[Corollary 3.1]{wannerer_area_measures} and \cite[Theorem 2]{bernig_dual_area_measures}, where the special case $M=S^{n-1}$ was considered. 

\begin{MainTheorem}[Existence of local additive kinematic formulas]\label{thm_existence1}
The space $\Area^G_{G/H}$ is finite-dimensional if and only if $G$ acts transitively on the unit sphere. In this case, if $\{\Phi_1,\dots,\Phi_N\}$ is a basis of $\Area^G_{G/H}$, then there exist local additive kinematic formulas
\begin{displaymath}
 \int_G \Phi_i(K+gL,\kappa\cap g\lambda)dg=\sum_{k,l=1}^N c^i_{k,l}\Phi_k(K,\kappa)\Phi_l(L,\lambda),
\end{displaymath}
where $\kappa,\lambda$ are Borel subsets of $M$. 
\end{MainTheorem}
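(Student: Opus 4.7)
The plan is to follow the strategy of Wannerer \cite{wannerer_area_measures} for the case $M = S^{n-1}$, extending it to the general homogeneous space $M = G/H$. The argument naturally splits into the finite-dimensionality assertion and the existence of the kinematic formulas.

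For finite-dimensionality, I would represent a smooth flag area measure by a differential form of appropriate bidegree on $V \times M$, modulo the kernel of forms vanishing on all normal cycles. Translation invariance together with $G$-equivariance then cuts the relevant space down to $G$-invariant forms on $V \times M$, which by restriction to the fiber over a base point of $M$ reduce further to $H$-invariant tensors on a finite-dimensional vector space. Since $H$ is compact, the space of such invariants is finite-dimensional. Conversely, if $G$ has infinitely many orbits on $S^{n-1}$, then infinitely many linearly independent $G$-invariant continuous functions $f$ on $S^{n-1}$ can be pulled back via the canonical $G$-equivariant projection $M \to S^{n-1} = G/(G \cap \O(n-1))$, and paired with the top classical surface area measure they produce an infinite linearly independent family in $\Area^G_{G/H}$.

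For the existence of kinematic formulas, set
\begin{displaymath}
A(\Phi)(K,L,\kappa,\lambda) := \int_G \Phi(K+gL,\kappa \cap g\lambda)\,dg.
\end{displaymath}
Fixing $(L,\lambda)$, the substitution $g \mapsto hg$ combined with the diagonal $G$-equivariance of $\Phi$ shows that $(K,\kappa) \mapsto A(\Phi)(K,L,\kappa,\lambda)$ is again $G$-equivariant, and analogously for the other pair of arguments. Once we check that this functional belongs to $\Area^G_{G/H}$ in each slot, finite-dimensionality from the first step allows us to expand in a basis $\{\Phi_1,\dots,\Phi_N\}$, producing the constants $c^i_{k,l}$ such that
\begin{displaymath}
A(\Phi_i)(K,L,\kappa,\lambda) = \sum_{k,l=1}^N c^i_{k,l}\,\Phi_k(K,\kappa)\,\Phi_l(L,\lambda).
\end{displaymath}

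The main obstacle is verifying that $A(\Phi)(\,\cdot\,,L,\,\cdot\,,\lambda)$ and $A(\Phi)(K,\,\cdot\,,\kappa,\,\cdot\,)$ are each represented by a smooth differential form on $V \times M$, rather than merely being continuous measure-valued valuations. This smoothness step proceeds by a Fubini-type argument on the level of defining forms, tracking how the form representing $\Phi$ transforms under Minkowski addition and the $G$-averaging, and it is here that the passage from $M = S^{n-1}$ to a general $M$ requires genuine work and the homogeneous bundle structure of $M$. Once smoothness is established, the remaining properties (translation invariance, valuation property, diagonal $G$-equivariance) follow formally from the construction, and the finite-dimensional expansion completes the proof.
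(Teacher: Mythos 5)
Your outline matches the paper's strategy in spirit: reduce to a finite-dimensional space of invariant forms, and show that the kinematic integral admits a bilinear expansion because it lands in a finite tensor-product space. You also correctly locate the non-trivial step---showing that the averaged functional $A(\Phi)$ is actually represented by smooth invariant forms, i.e., lands in $\Area^G_{G/H}\otimes\Area^G_{G/H}$, rather than merely being a continuous $G$-equivariant map in each argument---but you leave that step as an unspecified ``Fubini-type argument.'' This is precisely where the substance of the proof lies, and it cannot be dispatched formally: without it, the expansion ``in a basis in each slot'' is not justified, and the same unproved smoothness is needed again to show that the coefficient functions $a_k(L,\lambda)$ of the first expansion are themselves smooth flag area measures in $(L,\lambda)$.

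The paper supplies the missing step in the proof of Theorem~\ref{existence2} by building an explicit fiber-bundle diagram. It introduces $E:=\{(g,(x_i,p_i)_i)\in G\times(V\times M)^3: x_1+gx_2=x_3,\ p_1=gp_2=p_3\}$, maps $p:E\to(V\times M)^2$ and $q:E\to G\times V\times M$, the analogous $E',p',q'$ over the sphere bundle $SV$, and the bundle map $\Pi':E\to E'$. The $\overline{G}\times\overline{G}$-equivariance of this whole diagram, combined with Fu's Lemma~\ref{lemma_fu} on compatibility of pull-back and push-forward, forces $p_*q^*(dg\wedge\omega)$ to be a $\overline{G}\times\overline{G}$-invariant form whose $(m,m)$-bidegree component is a finite sum $\sum_{k,l}c_{k,l}\,\omega_k\wedge\omega_l$. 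The kinematic integral is then matched with this form by pulling back $\nc(K+gL)=q'_*\left[(q_1')^{-1}(g)\cap(p')^*(\nc K\times\nc L)\right]$ along $\Pi$ (a second application of Lemma~\ref{lemma_fu}) and converting the $G$-integral into a current pairing via Lemmas~\ref{lemma_push_forward_inverse} and~\ref{lemma_sard}. This is the ``genuine work'' you gesture at but do not carry out. As a smaller remark, for the ``only if'' direction the paper argues more economically via the surjection $\glob:\Area^G_{G/H}\to\Val^G$ and the known infinite-dimensionality of $\Val^G$ for non-transitive $G$; your proposed explicit family built from pulled-back $G$-invariant functions should also work, but the verification that these belong to $\Area^G_{G/H}$ and are linearly independent would still need to be spelled out.
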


Examples:
\begin{enumerate}
\item[(i)] Let $G$ be a subgroup of $\O(n)$ which acts transitively on $S^{n-1}$. Let $H=G \cap \O(n-1)$ be the stabilizer of the action. Then $M=S^{n-1}$ and the flag area measures in this case are called \emph{area measures}. The kinematic formulas in the case $G=\O(n)$ are classical \cite{schneider75}. For more general $G$, Wannerer has shown the existence of kinematic formulas \cite[Theorem 2.1]{wannerer_area_measures} and our proof follows his arguments. Explicit formulas in the hermitian case $G=\mathrm U(n)$ are contained in \cite{bernig_dual_area_measures, solanes, wannerer_area_measures}. 
\item[(ii)] Let $G=\SO(n), H=S(\O(p) \times \O(q))$,  $p+q=n-1$. Then $M=\Flag_{1,p+1}$ is the incomplete flag manifold consisting of pairs $(v,E)$ with $v \in S^{n-1}, E \in \Gr_{p+1}(V)$. Some elements in the space $\FlagArea^{(p),\SO(n)}:=\Area^G_{G/H}$ were constructed using a Steiner type formula in \cite{hug_tuerk_weil}, see also \cite{hinderer_hug_weil}. A complete description of this space was given in \cite{abardia_bernig_dann}.
 \item[(iii)] If $G \subset \SO(n)$ and $H=\{1\}$, we also call the elements in $\Area^{G}_{G/\{1\}}$ \emph{rotation measures}. They will be studied in Section \ref{sec_rotation_measures}.
\end{enumerate}

The local additive kinematic formulas can be encoded by the map
\begin{displaymath}
 A:\Area^G_{G/H} \to \Area^G_{G/H} \otimes \Area^G_{G/H},\ \Phi_i \mapsto \sum_{k,l} c_{k,l}^i \Phi_k \otimes \Phi_l,
\end{displaymath}
which is a cocommutative, coassociative coproduct. Alternatively, the dual space $(\Area^{G,*}_{G/H},A^*)$ is a commutative, associative algebra.

We give an explicit construction and classification of rotation measures. Consider a smooth compact convex body and $x \in \partial K$. Let $\nu(x)$ be the outer normal vector at $x$. Given $g \in \SO(n)$ with $ge_1=\nu(x)$, the vectors $ge_2,\ldots,ge_n$ span $T_x\partial K$. The shape operator is the self-adjoint linear map $S_x:=d\nu_x:T_x\partial K \to T_x\partial K$.

\begin{MainTheorem} \label{mainthm_rotation_measures}
The space of rotation measures of degree $k$ is of dimension  
\begin{displaymath}
\dim \Area^{\SO(n),*}_{k,\SO(n)/\{1\}}=\frac{1}{n}\binom{n}{k}\binom{n}{k+1}.
\end{displaymath}
For each $0 \leq k \leq n-1$ and for each ordered subsets $I,J \subset \{2,\ldots,n\}$ with $|I|=|J|=k$ there exists a unique rotation measure $S_{I,J} \in \Area^{\SO(n)}_{\SO(n)/\{1\},k}$ such that for every compact convex body with smooth boundary 
\begin{equation} \label{eq_rotation_on_smooth}
S_{I,J}(K,f)=\int_{\partial K}  \int_{\substack{g \in \SO(n)\\ ge_1=\nu(x)}} f(g) \det(\pi_{J^\perp} \circ S_x|_{V_I^\perp}:V_I^\perp \to V_J^\perp) dg \ d\mathcal H^{n-1}(x).
\end{equation}
Here $V_I=\mathrm{span}\{ge_i,i \in I\}, V_J:=\mathrm{span}\{ge_j,j \in J\}$ are oriented $k$-dimensional subspaces of $T_x\partial K$ and $\pi_{J^\perp}:T_x\partial K \to V_J^\perp$ is the orthogonal projection.

There are linear relations among these rotation measures: $S_{I,J}$ is antisymmetric in $I$ and antisymmetric in $J$. Moreover, given $I'=(i_1',\ldots,i_{k-1}') \subset \{2,\ldots,n\},J'=(j_1',\ldots,j'_{k+1}) \subset \{2,\ldots,n\}$ we have 
\begin{equation} \label{eq_relation_rotation_measures}
\sum_{l=1}^{k+1} (-1)^l S_{\{i_1',\ldots,i_{k-1}',j_{l}'\},\{j_1',\ldots,j_{l-1}',j_{l+1}',\ldots,j_{k+1}'\}}=0.
\end{equation}
The space $\Area^{\SO(n)}_{\SO(n)/\{1\},k}$ of rotation measures is isomorphic to the quotient of the vector space spanned by the $S_{I,J}$ by the relations \eqref{eq_relation_rotation_measures}. 
\end{MainTheorem}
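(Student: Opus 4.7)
The plan is to prove the theorem in three stages: (i) realize each $S_{I,J}$ as the rotation measure coming from an explicit $\SO(n)$-invariant, translation-invariant differential form on $V\times\SO(n)$; (ii) deduce the antisymmetry and Plücker-type relations by pulling back to the lifted normal cycle and using elementary identities on minors of the symmetric shape operator; (iii) combine a combinatorial identity with the kernel description from Section~\ref{sec_existence} to obtain the dimension.

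For (i), I work in the moving-frame formalism. On $V\times\SO(n)$, let $\theta_i$ be the translation-invariant, $\SO(n)$-invariant $1$-forms given by $\theta_i|_{(x,g)}(v,X)=\langle e_i,g^{-1}v\rangle$, and let $\omega_{ij}$ be the pullback of the left Maurer--Cartan components of $\SO(n)$. For $I,J\subset\{2,\ldots,n\}$ with $|I|=|J|=k$ set
$$\omega_{I,J}=c_{n,k}\,\theta_I\wedge\omega_{1,J^c}\wedge\bigwedge_{2\le a<b\le n}\omega_{ab},$$
where $\theta_I=\bigwedge_{i\in I}\theta_i$, $\omega_{1,J^c}=\bigwedge_{j\in J^c}\omega_{1j}$, and $J^c=\{2,\ldots,n\}\setminus J$. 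This form has total degree $\binom{n}{2}$ and $V^*$-degree $k$, hence represents a degree-$k$ rotation measure. Parametrizing $\widetilde N(K)$ by pairs $(x,g)$ with $ge_1=\nu(x)$, each tangent vector decomposes into a base part $v\in T_x\partial K$ and a fiber part $\xi\in\sofrak(n-1)$; one computes $\theta_i|_v=\langle e_i,g^{-1}v\rangle$ and (up to sign) $\omega_{1j}|_v=\langle e_j,g^{-1}S_xv\rangle$ on base vectors, whereas the $\omega_{ab}$ for $a,b\ge 2$ restrict to the Maurer--Cartan form on the fiber. Evaluating $\theta_I\wedge\omega_{1,J^c}$ on an ordered basis of $T_x\partial K$ gives $\det(\pi_{J^\perp}\circ S_x|_{V_I^\perp})$ times the area form $d\mathcal H^{n-1}$, while $\bigwedge_{a<b}\omega_{ab}$ is the Haar measure on the fiber $\SO(n-1)$. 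With the right normalization $c_{n,k}$, this reproduces~\eqref{eq_rotation_on_smooth}, and uniqueness of $S_{I,J}$ follows from density of smooth bodies in $\K(V)$.

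For (ii), antisymmetry of $S_{I,J}$ in $I$ is inherited from the antisymmetry of $\theta_I$; antisymmetry in $J$ comes from the orientation convention for $V_J$ in~\eqref{eq_rotation_on_smooth}. Formula~\eqref{eq_relation_rotation_measures} is equivalent, via~\eqref{eq_rotation_on_smooth}, to the identity
$$\sum_{l=1}^{k+1}(-1)^l\det\bigl(S|_{(J'^c\cup\{j_l'\})\times(I'^c\setminus\{j_l'\})}\bigr)=0$$
on minors of the self-adjoint operator $S=S_x$; terms with $j_l'\in I'$ vanish by antisymmetry, so only the case $I'\cap J'=\emptyset$ needs verification. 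The identity is then a direct Laplace-type expansion that uses the symmetry $s_{ab}=s_{ba}$ in an essential way.

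For (iii), the combinatorial identity
$$\binom{n-1}{k}^2-\binom{n-1}{k-1}\binom{n-1}{k+1}=\frac{1}{n}\binom{n}{k}\binom{n}{k+1}$$
(easily verified by direct manipulation) gives an upper bound on the dimension of the quotient by the antisymmetry and Plücker relations. By Section~\ref{sec_existence}, $\Area^{\SO(n)}_{\SO(n)/\{1\},k}$ is identified with $\SO(n)$-invariant, translation-invariant forms on $V\times\SO(n)$ of $V^*$-degree $k$ and total degree $\binom{n}{2}$, modulo the kernel of the form-to-measure map. Discarding the trivial part of the kernel (forms with a $\theta_1$ factor or with insufficient fiber degree) leaves the span of the $\omega_{I,J}$, so the $S_{I,J}$ generate the space. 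The main obstacle is the last step: showing that no relations among the $S_{I,J}$ exist beyond the explicit Plücker ones. I would attack this by interpreting $\omega_{I,J}\mapsto\det(S|_{J^c\times I^c})$ as an evaluation onto polynomials in the entries of a symmetric matrix, arguing that the kernel of this evaluation is generated by the Plücker relations, and matching this algebraic kernel with the second-order kernel from Section~\ref{sec_existence}.
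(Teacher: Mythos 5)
Your proposal follows essentially the same skeleton as the paper's proof: the form $\omega_{I,J}=\theta_I\wedge\omega_{1,J^c}\wedge\rho$ you construct is (up to a sign absorbed into the normalization) exactly the paper's $\sigma_{i_1}\wedge\dots\wedge\sigma_{i_k}\wedge\omega_{j^c_1,1}\wedge\dots\wedge\omega_{j^c_{n-k-1},1}\wedge\rho$, the evaluation on a moving frame producing the determinant of the complementary minor of $S_x$ is the same computation, and the combinatorial identity
\begin{displaymath}
\binom{n-1}{k}^2-\binom{n-1}{k-1}\binom{n-1}{k+1}=\frac{1}{n}\binom{n}{k}\binom{n}{k+1}
\end{displaymath}
is precisely what the paper uses. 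The concrete detour through identities on minors of a symmetric matrix to get \eqref{eq_relation_rotation_measures} is fine, but the paper arrives at the same relations more directly by noting that wedging $\sigma_{i'_1}\wedge\dots\wedge\sigma_{i'_{k-1}}\wedge\omega_{j''_1,1}\wedge\dots\wedge\omega_{j''_{n-k-2},1}\wedge\rho$ (with $\{j''_l\}$ the complement of $J'$) with $\Pi^*d\alpha=\sum_i\sigma_i\wedge\omega_{i,1}$ produces exactly the alternating sum in \eqref{eq_relation_rotation_measures}, and such multiples kill the induced measure by Proposition~\ref{prop_kernel}.

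The step you flag as ``the main obstacle''---that no relations beyond the Pl\"ucker ones exist---is not actually open once you invoke the kernel description you already cite. Proposition~\ref{prop_kernel} says the kernel of the form-to-measure map is \emph{exactly} $\langle\Pi^*\alpha,\Pi^*d\alpha,\mathfrak F^{m,r}\rangle$. Restricting to bidegree $(k,m-k)$ and quotienting by $\Pi^*\alpha$ and $\mathfrak F^{m,r}$ identifies the space of forms, modulo the trivial part of the kernel, with $\Lambda^kV_\sigma^*\otimes\Lambda^{n-1-k}V_\omega^*$; quotienting further by the $\Pi^*d\alpha$-multiples is then the definition of $\Area^{\SO(n)}_{\SO(n)/\{1\},k}$. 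Since the $\Pi^*d\alpha$-multiples are exactly the Pl\"ucker relations, no other relations can occur and there is no need to analyze the kernel of an evaluation map on polynomials over symmetric matrices. What both you and the paper leave implicit---and should at least be mentioned---is that multiplication by $\Pi^*d\alpha\colon\Lambda^{k-1}V_\sigma^*\otimes\Lambda^{n-k-2}V_\omega^*\to\Lambda^kV_\sigma^*\otimes\Lambda^{n-k-1}V_\omega^*$ is injective (a standard hard-Lefschetz / $\mathfrak{sl}_2$ fact, valid here because the total degree is at most $n-1=\dim V_\sigma$), so the Pl\"ucker relations are linearly independent and the quotient has the dimension claimed; your phrase ``gives an upper bound'' has the inequality pointing the wrong way (quotienting by more relations shrinks dimension, so linear independence of the relations yields a \emph{lower} bound to match Proposition~\ref{prop_kernel}'s upper bound).
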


The next theorem describes algebraically the kinematic formulas for $\Area^{G}_{G/H}$ for any closed subgroup $H \subset \SO(n-1)$. 

Let us write $S_{i,j}:=S_{\{i\},\{j\}}$ in the case $k=1$. Then \eqref{eq_relation_rotation_measures} translates to $S_{i,j}= S_{j,i}$. Hence the element $\frac{S^*_{i,j}+S^*_{j,i}}{2}$ belongs to $\Area^{\SO(n)*}_{\SO(n)/\{1\},1}$.

\begin{MainTheorem} \label{mainthm_algebra}
The map $x_{i,j} \mapsto - \vol \SO(n) \cdot \frac{S^*_{i,j}+S^*_{j,i}}{2}$ induces a graded algebra morphism 
\begin{displaymath}
\R[X]/I \cong \Area^{\SO(n),*}_{\SO(n)/\{1\}},
\end{displaymath}
where $X:=(x_{i,j})_{2\leq i,j\leq n}$ and where $I \subset \R[X] $ is the ideal generated by 
\begin{displaymath}
x_{i,j}x_{k,l}+x_{i,k}x_{j,l}+x_{i,l}x_{j,k},\quad x_{i,j}-x_{j,i}.
\end{displaymath}
If $H \subset \SO(n-1)$ is a closed subgroup, then 
\begin{displaymath}
\Area^{\SO(n),*}_{\SO(n)/H} \cong \left(\R[X]/I\right)^H,
\end{displaymath}
with respect to the action $h^*X=h^tXh$. 
\end{MainTheorem}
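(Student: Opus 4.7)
\emph{Proof proposal.}
Our plan combines Main Theorem~\ref{mainthm_rotation_measures} with an explicit product formula for duals of flag area measures, analogous to the one in \cite{bernig_dual_area_measures} and developed in the preceding sections for the flag setting. The starting point is that, by Main Theorem~\ref{thm_existence1}, the dual $\Area^{\SO(n),*}_{\SO(n)/\{1\}}$ carries a commutative associative graded algebra structure, and by Main Theorem~\ref{mainthm_rotation_measures} a basis is given by the dual rotation measures $S^*_{I,J}$ subject to the relations \eqref{eq_relation_rotation_measures}.

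First, we verify that the assignment $x_{i,j}\mapsto-\vol\SO(n)\cdot\frac{S^*_{i,j}+S^*_{j,i}}{2}$ descends to a well-defined graded algebra morphism $\varphi\colon\R[X]/I\to\Area^{\SO(n),*}_{\SO(n)/\{1\}}$. Symmetry in $(i,j)$ is built into the definition. For the Plücker-type relation $x_{i,j}x_{k,l}+x_{i,k}x_{j,l}+x_{i,l}x_{j,k}$, we evaluate the product of two symmetrized degree-one duals via the product formula for dual flag area measures and express the outcome as a linear combination of $S^*_{I,J}$ with $|I|=|J|=2$. The antisymmetry of $S_{I,J}$ in $I$ and in $J$, combined with the relations \eqref{eq_relation_rotation_measures}, should force the cyclic sum over $(j,k,l)$ to vanish, since antisymmetrization of the index triples encodes exactly the three-term Plücker identity.

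To show $\varphi$ is an isomorphism we argue by graded dimensions. For surjectivity, iterating the product formula shows that every $S^*_{I,J}$ with $|I|=|J|=k$ lies in the image of the degree-$k$ part of $\R[X]/I$: this mirrors the determinantal expansion in \eqref{eq_rotation_on_smooth}, which writes the defining integrand as a signed sum of products of entries of the shape operator, and these entries are exactly the degree-one rotation measures. For injectivity, we compute the Hilbert series of $\R[X]/I$. The ideal $I$ defines a classical determinantal ring, and a standard calculation (or identification with a known variety) yields the graded dimensions $\tfrac{1}{n}\binom{n}{k}\binom{n}{k+1}$, matching Main Theorem~\ref{mainthm_rotation_measures}; the fact that \eqref{eq_relation_rotation_measures} and antisymmetry already cut the space down to this dimension then shows that $\varphi$ is an isomorphism.

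For the last statement, a translation-invariant $\SO(n)$-equivariant smooth measure on $V\times \SO(n)/H$ pulls back to an $\SO(n)$-equivariant smooth measure on $V\times\SO(n)$ whose underlying differential form is $H$-invariant for the right $H$-action. This yields a canonical identification $\Area^{\SO(n)}_{\SO(n)/H}\cong(\Area^{\SO(n)}_{\SO(n)/\{1\}})^H$ and, by duality, $\Area^{\SO(n),*}_{\SO(n)/H}\cong(\Area^{\SO(n),*}_{\SO(n)/\{1\}})^H$. Since $H\subset\O(n-1)$ acts on the tangent frame $(ge_2,\dots,ge_n)$ spanning $T_x\partial K$ by orthogonal change of basis, the induced action on the shape-operator matrix $X$ is precisely the conjugation $h^*X=h^tXh$, which is compatible with the ideal $I$ (so that $(\R[X]/I)^H$ makes sense). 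The principal obstacle is the explicit computation of the product of two degree-one duals, and in particular the verification of the Plücker-type identity; once the product formula for dual flag area measures is set up, this reduces to a finite combinatorial check controlled by the antisymmetry in Main Theorem~\ref{mainthm_rotation_measures}.
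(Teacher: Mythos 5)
Your high-level strategy matches the paper's: identify $\Area^{\SO(n),*}_{\SO(n)/\{1\}}$ as a graded algebra via Theorem~\ref{thm_ftaig}, verify that $I$ dies under the map sending $x_{i,j}$ to a symmetrized degree-one dual, and then close the argument by comparing graded dimensions against Theorem~\ref{mainthm_rotation_measures}; the $H$-invariance reduction via $\glob^*$ is also the paper's Corollary~\ref{subalgebras}. However, there are two concrete gaps, and one misattribution of the mechanism.

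First, you attribute the Plücker relation $x_{i,j}x_{k,l}+x_{i,k}x_{j,l}+x_{i,l}x_{j,k}=0$ to the antisymmetry of $S_{I,J}$ and the relations \eqref{eq_relation_rotation_measures}. Those relations live on the primal side (they come from factoring out multiples of $\Pi^*d\alpha$) and do not directly translate into a relation among products of dual degree-one elements. The paper instead realizes $\Area^{\SO(n),*}_{\SO(n)/\{1\},k}$ as the kernel $\mathcal L_k\subset\Lambda^k(V_\omega^{\C})^*\otimes\Lambda^k(V_\omega^{\C})^*$ of the Lefschetz map $L$, and sends $x_{i,j}\mapsto \tfrac12(\omega_{1i}\otimes\omega_{1j}+\omega_{1j}\otimes\omega_{1i})$; the Plücker relation is then an identity among wedge products in $\Lambda^2\otimes\Lambda^2$ that is verified by a direct one-line expansion. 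This is what the $*_1$-convolution of Theorem~\ref{thm_ftaig} actually computes; the $S_{I,J}$-relations are not the right input.

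Second, your surjectivity and injectivity arguments are both essentially placeholders. For surjectivity you write "iterating the product formula shows that every $S^*_{I,J}$ lies in the image," mirroring the determinantal formula \eqref{eq_rotation_on_smooth}. But that formula describes the primal $S_{I,J}$, and because the $S_{I,J}$ satisfy the linear relations \eqref{eq_relation_rotation_measures}, the "dual basis" $S^*_{I,J}$ is not a well-defined object for $k>1$, so this route does not close on its own. The paper gets surjectivity from a clean representation-theoretic fact: $\mathcal L_k$ is an irreducible $\SL(n-1,\C)$-module, $\Psi$ is equivariant, and $\Psi(x_{23}x_{45}\cdots)\neq 0$. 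For injectivity you appeal to "a standard calculation (or identification with a known variety)," which is precisely the content one must supply: the paper proves the upper bound $\dim(\C[X]/I)_k\le\tfrac1n\binom{n}{k}\binom{n}{k+1}$ by an explicit combinatorial reduction (pairing up indices and using the Catalan number of non-crossing pairings) in Lemma~\ref{lemma_upper_bound_dimension_algebra}. So the two pieces you flag as "should work" are exactly where the actual proof lives; filling them requires the $\mathcal L_k$-identification and irreducibility on one side and the combinatorial dimension bound on the other, both of which are nontrivial and absent from your sketch.
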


In the important case of the incomplete flag manifold consisting of pairs $(v,E)$ with $\dim E=p+1, v \in E$, we write down the algebra structure more explicitly.
\begin{proposition} \label{prop_algebra_flag_case}
Let $p+q=n-1$, $G=\SO(n), H=S(\O(p)\times \O(q)), M=G/H=\Flag_{1,p+1}$. If $p \neq \frac{n-1}{2}$, then there is a graded isomorphism  
\begin{displaymath}
\FlagArea^{(p),\SO(n),*} \cong \R[x,y]/\langle x^{p+1},y^{q+1}\rangle,
\end{displaymath}
where $\deg x=\deg y=1$.
If $p =q= \frac{n-1}{2}$, then there is a graded isomorphism  
\begin{displaymath}
\FlagArea^{(p),\SO(n),*} \cong \R[x,y,u]/\left\langle x^{p+1},y^{p+1},ux,uy,u^2-(-1)^p\frac{p+1}{2^{2p}}x^py^p \right\rangle,
\end{displaymath}
where $\deg u=\frac{n-1}{2}$.
\end{proposition}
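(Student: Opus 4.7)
My approach is to apply Theorem \ref{mainthm_algebra} to identify $\FlagArea^{(p),\SO(n),*}$ with the $H$-invariants of $\R[X]/I$, where $H=S(\O(p)\times\O(q))$ acts on the symmetric $(n-1)\times(n-1)$ matrix $X$ by $X\mapsto h^tXh$. Block-decomposing $X=\left(\begin{smallmatrix}A&B\\B^t&C\end{smallmatrix}\right)$ with $A,C$ symmetric of sizes $p\times p$ and $q\times q$ and $B$ of size $p\times q$, the natural candidate generators are $\tilde x=\tr A$ and $\tilde y=\tr C$, both of degree $1$, together with $\tilde u=\det B$ (of degree $p$) in the case $p=q$; the last is $H$-invariant because $\det(g_1^tBg_2)=\det g_1\det g_2\det B=\det B$ on $S(\O(p)\times\O(p))$.

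The main algebraic input is a collection of matrix identities in $\R[X]/I$ obtained from the Pl\"ucker consequence $x_{ii}x_{jk}=-2x_{ij}x_{ik}$ (the index pattern $(i,i,j,k)$). Summing over $i$ in the $A$-range yields $A^2=-\tfrac12(\tr A)A$; analogous computations give
\begin{displaymath}
C^2=-\tfrac12(\tr C)C,\ AB=-\tfrac12(\tr A)B,\ BC=-\tfrac12(\tr C)B,\ BB^t=-\tfrac12(\tr C)A,\ B^tB=-\tfrac12(\tr A)C.
\end{displaymath}
The special case $x_{ii}^2=0$ yields $\tilde x^{p+1}=\tilde y^{q+1}=0$ by pigeonhole, and $x_{ii}x_{i,p+j}=0$ combined with a Laplace expansion of $\det B$ along row $i$ gives $x_{ii}\tilde u=0$, hence $\tilde u\tilde x=\tilde u\tilde y=0$ when $p=q$.

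For $\tilde u^2$: iterating $A^2=-\tfrac12(\tr A)A$ gives $\tr A^m=(-\tfrac12)^{m-1}(\tr A)^m$, and plugging these power sums into $\log E(t)=\sum_{k\ge 1}\frac{(-1)^{k-1}}{k}p_kt^k$ identifies $E(t)=(1-\tr A\cdot t/2)^{-2}$, yielding $\det A=\tfrac{p+1}{2^p}\tilde x^p$. Combined with $(\det B)^2=\det(BB^t)=\det(-\tfrac12(\tr C)A)=(-\tfrac12)^p(\tr C)^p\det A$, this delivers
\begin{displaymath}
\tilde u^2=(-1)^p\frac{p+1}{2^{2p}}\tilde x^p\tilde y^p,
\end{displaymath}
precisely the stated relation.

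Having established the relations, we obtain a well-defined graded algebra map from the algebra in the statement into $\FlagArea^{(p),\SO(n),*}$. For surjectivity, the matrix identities above reduce any $\O(p)\times\O(q)$-invariant contraction of $A,B,C$ modulo $I$ to a polynomial in $\tr A$ and $\tr C$, while the only $S(\O(p)\times\O(q))$-pseudoinvariant that introduces something genuinely new is $\det B$ (and only when $p=q$). The dimension $(p+1)(q+1)$ (resp.\ $(p+1)^2+1$) of $\FlagArea^{(p),\SO(n)}$, supplied by \cite{abardia_bernig_dann}, matches the source algebra and forces the isomorphism. The principal technical difficulty lies precisely in certifying the list of $S(\O(p)\times\O(q))$-pseudoinvariants; once the matrix identities above are available, this either reduces to a classical application of the first fundamental theorem for the orthogonal groups, or can be bypassed via the independent dimension count.
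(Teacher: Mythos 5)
Your proposal follows the same structural blueprint as the paper: identify $\FlagArea^{(p),\SO(n),*}$ with $(\R[X]/I)^H$ via Corollary~\ref{subalgebras}, take the same generators (your $\tr A$, $\tr C$, $\det B$ coincide with the paper's $E_1(x_{22},\ldots,x_{p+1,p+1})$, $E_1(x_{p+2,p+2},\ldots,x_{n,n})$ and $\det(x_{ij})_{2\le i\le p+1,\,p+2\le j\le n}$, since $E_1^i=i!E_i$ by Lemma~\ref{lemma_det_as_xii}(iii)), verify the defining relations, and invoke the dimension count $\dim\FlagArea^{(p),\SO(n)}_k=m_k+1$ (or $+2$ in the exceptional degree) from \cite{abardia_bernig_dann}. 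Your repackaging of the Pl\"ucker consequences as block-matrix identities $A^2=-\tfrac12(\tr A)A$, $BB^t=-\tfrac12(\tr C)A$, etc., is a pleasant and more conceptual variant of the paper's direct computations, and your Newton-identity derivation of $\det A=\tfrac{p+1}{2^p}(\tr A)^p$ reproduces the correct constant in the $\tilde u^2$ relation.

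The gap is at the end. Having a well-defined graded algebra morphism from $\R[x,y]/\langle x^{p+1},y^{q+1}\rangle$ (resp.\ the extended algebra with $u$) into $(\R[X]/I)^H$ together with equality of dimensions does \emph{not} by itself force the map to be an isomorphism: one still needs injectivity or surjectivity as an independent input, so the dimension count cannot ``bypass'' the certification step as you claim. Your sketched surjectivity route via the first fundamental theorem for $\O(p)\times\O(q)$ plus the reduction identities is plausible, but it would need to be carried out in full, including controlling the $S(\O(p)\times\O(q))$-relative invariants and the fact that the relevant FFT concerns the action on symmetric matrices. The paper instead proves injectivity directly by introducing a bigrading on $\R[X]/I$: entries from the $A$-block are assigned bidegree $(1,0)$, from the $C$-block bidegree $(0,1)$, and from the $B$-block bidegree $(\tfrac12,\tfrac12)$. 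The ideal $I$ is bihomogeneous and $\tilde\Xi(x^iy^j)$ has bidegree $(i,j)$, so the images of the monomial basis are nonzero and sit in distinct bigraded components, with the single collision between $u$ and $x^{p/2}y^{p/2}$ (when $p$ is even) resolved by an explicit evaluation on tangent vectors. You should supply an argument of comparable precision to close the gap.
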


A basis of the space $\FlagArea^{(p),\SO(n)}$ consists of flag area measures $S_k^{(p),i}$, where $0 \leq k \leq n-1, 0 \leq i \leq m_k$ (together with an additional flag area measure $\tilde S_{\frac{n-1}{2}}^{\left(\frac{n-1}{2}\right)}$ if $p=q=\frac{n-1}{2}$), see \cite{abardia_bernig_dann}. The algebra structure given in the previous corollary translates into explicit kinematic formulas for these flag area measures. 

Let $\omega_n$ denote the volume of the $(n-1)$-dimensional unit sphere and let
\begin{displaymath}
c_{n,k,p,i}:=\binom{n-1}{k}^{-1}\binom{m_{k}}{i}^{-1}\binom{|k-q|+m_{k}}{i}^{-1}\binom{n-1}{i},
\end{displaymath}
as in \cite{abardia_bernig_dann}. For given $p,q,k$, we write $m_{k}:=\min\{p,q,k,n-k-1\}$, $m'_{k}:=\min\{p,k\}$ and define 
\begin{align*}
C^{k,i}_{j,b,c} & := (-1)^{b+c+m'_{j}+m'_{k-j}} \frac{c_{n,k,p,i}}{c_{n,j,p,b}c_{n,k-j,p,c}} \sum_{t=m'_{k}-m_{k}}^{m'_{k}-i} (-1)^t \binom{m'_{k}-t}{i} \cdot \\
& \quad \cdot \sum_{s=\max\{m'_{j}-b,t-m_{k-j}\}}^{\min\{m_{j},t-m'_{k-j}+c\}} \binom{b}{m'_{j}-s}\binom{c}{m'_{k-j}-t+s} \cdot \\
& \quad \cdot\binom{q}{j-s}^{-1}\binom{p}{s}^{-1}\binom{q-k+t-s+j}{j-s}\binom{p-t+s}{s}.
\end{align*}
We do not know whether this expression can be simplified any further.

\begin{MainTheorem}[Local additive kinematic formulas for flag area measures] \label{thm_kin_S}
Let $0\leq p,k\leq n-1$,  and $0\leq i\leq m_{k}$. Then, 
\begin{enumerate}
\item[(i)] If $(k,p) \neq \left(n-1,\frac{n-1}{2}\right)$, then 
\begin{displaymath}
A_{1,p+1}^{\SO(n)}\left(S_k^{(p),i}\right)=\frac{1}{\omega_n}\sum_{j=0}^k\sum_{b=0}^{m_{j}}\sum_{c=0}^{m_{k-j}}
C^{k,i}_{j,b,c}S_j^{(p),b}\otimes S_{k-j}^{(p),c},
\end{displaymath}
\item[(ii)] If $(k,p) = \left(n-1,\frac{n-1}{2}\right)$, then  
\begin{align*}
A_{1,\frac{n+1}{2}}^{\SO(n)}\left(S_{n-1}^{\left(\frac{n-1}{2}\right),0}\right) & =\frac{1}{\omega_n}\sum_{j=0}^{n-1}\sum_{b,c=0}^{m_{j}} 
C^{n-1,0}_{j,b,c}S_j^{\left(\frac{n-1}{2}\right),b}\otimes S_{k-j}^{\left(\frac{n-1}{2}\right),c} \\
& \quad + \frac{(-1)^{\frac{n-1}{2}} (n+1)}{2^{n}\omega_n} \tilde S_{\frac{n-1}{2}}^{\left(\frac{n-1}{2}\right)} \otimes \tilde S_{\frac{n-1}{2}}^{\left(\frac{n-1}{2}\right)},
\end{align*}
\item[(iii)] If $(k,p) = \left(\frac{n-1}{2},\frac{n-1}{2}\right)$, then 
\begin{displaymath}
A^{\SO(n)}_{1,\frac{n+1}{2}}\left(\tilde S_{\frac{n-1}{2}}^{\left(\frac{n-1}{2}\right)}\right)=\frac{1}{\omega_n}\left(\tilde S_{\frac{n-1}{2}}^{\left(\frac{n-1}{2}\right)} \otimes S_0^{\left(\frac{n-1}{2}\right),0}+S_0^{\left(\frac{n-1}{2}\right),0} \otimes \tilde S_{\frac{n-1}{2}}^{\left(\frac{n-1}{2}\right)}\right).
\end{displaymath}
\end{enumerate}
\end{MainTheorem}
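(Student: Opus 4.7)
The plan is to reduce Theorem \ref{thm_kin_S} to the algebraic description supplied by Proposition \ref{prop_algebra_flag_case}. Since $A^{\SO(n)}_{1,p+1}$ is dual to the multiplication on $\FlagArea^{(p),\SO(n),*}$, once this product is written explicitly in the basis dual to $\{S_k^{(p),i}\}$ (together with $\tilde S_{\frac{n-1}{2}}^{(\frac{n-1}{2})}$ in the symmetric case), transposition yields the required coproduct identities. The global factor $1/\omega_n$ originates from the normalization of the algebra isomorphism of Proposition \ref{prop_algebra_flag_case}, analogous to the $\vol \SO(n)$ factor appearing in Theorem \ref{mainthm_algebra}.

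First I would identify the dual basis explicitly. Using the construction of the $S_k^{(p),i}$ from \cite{abardia_bernig_dann} via invariant differential forms whose pullback to the sphere bundle is controlled by how the tangent frame splits with respect to the distinguished $(p+1)$-plane and its orthogonal complement, one writes $(S_k^{(p),i})^*$, under the isomorphism of Proposition \ref{prop_algebra_flag_case}, as a specific linear combination of the monomials $x^a y^{k-a}$ in $\R[x,y]/\langle x^{p+1},y^{q+1}\rangle$. The combinatorial weight of this combination is exactly the normalization $c_{n,k,p,i}$ appearing in the definition of $C^{k,i}_{j,b,c}$; it encodes the explicit pairing between the distinguished form-theoretic basis and the monomial basis.

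Next I would compute $(S_j^{(p),b})^* \cdot (S_{k-j}^{(p),c})^*$ by multiplying the corresponding polynomials in $\R[x,y]/\langle x^{p+1},y^{q+1}\rangle$ and re-expanding the degree-$k$ result in the basis $\{(S_k^{(p),i})^*\}$. The ranges of the inner sums over $s$ and $t$ in $C^{k,i}_{j,b,c}$ are forced precisely by the vanishing conditions $a\leq p$ and $k-a\leq q$ for monomials in the quotient, while the alternating signs record the inversion of the triangular basis change. Transposition then yields case (i). Case (iii) follows at once from the relations $ux=uy=0$: the only degree-zero element of the algebra is $(S_0^{(\frac{n-1}{2}),0})^*$, so the dual of $\tilde S^{(\frac{n-1}{2})}_{\frac{n-1}{2}}$ can pair nontrivially only through this element. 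Case (ii) comes from the remaining relation $u^2 = (-1)^{\frac{n-1}{2}} \frac{p+1}{2^{2p}} x^p y^p$, which contributes the extra term $\tilde S^{(\frac{n-1}{2})}_{\frac{n-1}{2}} \otimes \tilde S^{(\frac{n-1}{2})}_{\frac{n-1}{2}}$ to $A^{\SO(n)}_{1,(n+1)/2}\bigl(S^{(\frac{n-1}{2}),0}_{n-1}\bigr)$, with the coefficient $\frac{(-1)^{(n-1)/2}(n+1)}{2^{n}\omega_n}$ read off directly from the constant in that relation combined with the factor $1/\omega_n$.

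The principal obstacle is the combinatorial bookkeeping in the middle step: inverting the triangular transformation between the monomials $x^a y^b$ and the dual basis $\{(S_k^{(p),i})^*\}$ produces the nested alternating binomial sums over $s$ and $t$ that constitute $C^{k,i}_{j,b,c}$. As remarked in the paper, no simpler closed form is known, so the combinatorial heart of the proof proceeds by direct expansion rather than any conceptual shortcut, and cross-checks in low dimensions (e.g.\ recovering Wannerer's hermitian formulas when specialized, or the classical area measure formulas) serve as the primary verification of the normalization constants.
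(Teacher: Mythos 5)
Your proposal is correct and follows essentially the same route as the paper: dualize via Theorem \ref{thm_ftaig}, identify the dual basis elements as polynomials through Lemma \ref{lemma_dual_omega}, multiply in $\R[x,y,u]/I$ and re-expand in the $\{(S_k^{(p),i})^*\}$ basis to read off $C^{k,i}_{j,b,c}$, and use the relations $ux=uy=0$ and $u^2 = (-1)^p\frac{p+1}{2^{2p}}x^py^p$ for the exceptional cases; the only small slip is the suggested cross-check against Wannerer's hermitian ($\U(n)$) formulas, which is not a specialization of this $\SO(n)$ flag setting — the paper instead verifies by globalizing to the classical $\SO(n)$ area-measure kinematic formula \eqref{eq_kinematic_area}.
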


\smallskip
\subsection*{Acknowledgments}
We thank Thomas Wannerer for useful comments on a first draft of this paper.

\section{Existence of kinematic formulas for smooth flag area measures}\label{sec_existence}

In this section we will introduce smooth area measures. Our definition will be justified by the existence of kinematic formulas for smooth area measures that will be shown in this section. 

\subsection{Fiber integration}

We first collect some definitions and results from the theory of fiber bundles that will be used in the following.
\begin{definition}\label{def_push-forward}
Let $(E,\pi,B,F)$ be a fiber bundle, with $B$ and $E$ oriented manifolds and $F$ compact with $\dim F=r$. The \emph{fiber integration} or \emph{push-forward} of a form $\eta\in\Omega^{d+r}(E)$ is the form $\pi_*\eta \in \Omega^{d}(B)$ such that
\begin{displaymath}
\int_B \omega \wedge \pi_* \eta=  \int_{E} \pi^{*}\omega \wedge \eta,
\end{displaymath}
for every differential form $\omega\in\Omega^*(B)$ with compact support. 
\end{definition}

We then have the projection formula
\begin{equation} \label{eq_projection_formula}
\pi_*(\pi^*\omega \wedge \eta)=\omega \wedge \pi_*\eta.
\end{equation}

We note that for the above definition of fiber integration, we follow the sign convention in \cite{alvarez_fernandes}, as in \cite{abardia_bernig_dann, wannerer_area_measures}. For another sign convention see, e.g., \cite{berline_getzler_vergne}.

\begin{lemma}[{\cite[Equation (7)]{wannerer_area_measures}}] \label{lemma_push_forward_inverse}
Let $(E,\pi,B,F)$ be a fiber bundle, with $B$ orientable and $E$ oriented with the local product orientation. If $N\subset B$ is a compact and oriented submanifold with $\dim N=n$ and $\pi^{-1}(N)\subset E$ has the local product orientation, then, for every $\omega\in\Omega^{n+r}(E)$ with fiber-compact support,
$$\int_N \pi_*\omega=\int_{\pi^{-1}(N)}\omega.$$
\end{lemma}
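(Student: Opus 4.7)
The plan is to reduce the identity to a local statement in adapted coordinates and then recognize it as an instance of Fubini's theorem; a partition of unity on $N$ glues the local pieces together.

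As a first step, I would use compactness of $N$ to find a finite cover of $N$ in $B$ by open sets $U_\alpha$ over which the bundle is trivial via orientation-preserving diffeomorphisms $\phi_\alpha \colon \pi^{-1}(U_\alpha) \xrightarrow{\sim} U_\alpha \times F$, chosen so that each $U_\alpha$ carries coordinates $(x_1,\ldots,x_b)$ (where $b = \dim B$) adapted to $N$, i.e.\ $N \cap U_\alpha = \{x_{n+1}=\cdots=x_b=0\}$ with $(x_1,\ldots,x_n)$ positively oriented on $N$. Pick a partition of unity $\{\rho_\alpha\}$ on a neighborhood of $N$ in $B$ subordinate to $\{U_\alpha\}$ and set $\omega_\alpha := (\pi^*\rho_\alpha)\,\omega$. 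By the projection formula \eqref{eq_projection_formula},
\begin{displaymath}
\pi_*\omega_\alpha = \rho_\alpha\,\pi_*\omega,
\end{displaymath}
so $\sum_\alpha \pi_*\omega_\alpha = \pi_*\omega$ near $N$, and the asserted identity splits as a sum over $\alpha$. Thus it suffices to prove it for each $\omega_\alpha$, which is supported in $\pi^{-1}(U_\alpha)$ and still has fiber-compact support.

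For the local computation I would expand, in the coordinates $(x_1,\ldots,x_b,y_1,\ldots,y_r)$ on $U_\alpha \times F$,
\begin{displaymath}
\omega_\alpha = \sum_{|I|+|J| = n+r} f_{I,J}(x,y)\, dx^I \wedge dy^J.
\end{displaymath}
Unpacking Definition \ref{def_push-forward} in these coordinates, $\pi_*\omega_\alpha$ only registers the summands with $|J|=r$, and pulling back to $N$ further kills every $dx^I$ with $I\neq(1,\ldots,n)$. Consequently
\begin{displaymath}
\int_N \pi_*\omega_\alpha = \int_{N \cap U_\alpha} \left(\int_F f_{(1,\ldots,n),(1,\ldots,r)}(x,y)\, dy^1\cdots dy^r\right) dx^1\cdots dx^n.
\end{displaymath}
On the other hand $\pi^{-1}(N)\cap\pi^{-1}(U_\alpha)\cong (N\cap U_\alpha)\times F$, and the single summand of $\omega_\alpha$ with $I=(1,\ldots,n)$, $J=(1,\ldots,r)$ contributes the full top form $f_{(1,\ldots,n),(1,\ldots,r)}\,dx^1\cdots dx^n\wedge dy^1\cdots dy^r$ to the integral over this piece. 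Fubini's theorem then identifies $\int_{\pi^{-1}(N)}\omega_\alpha$ with the same iterated integral, which finishes the local step.

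The main obstacle is the orientation bookkeeping: I must ensure that the trivializations $\phi_\alpha$ can be chosen simultaneously orientation-preserving for the given orientation of $E$ \emph{and} for the local product orientation of $\pi^{-1}(N)$, so that no stray sign appears when identifying the oriented integrals with iterated Lebesgue integrals. Since the sign convention of \cite{alvarez_fernandes} adopted in Definition \ref{def_push-forward} places the fiber variables after the base variables, the local-product hypothesis on $\pi^{-1}(N)$ forces consistency with the orientations induced from $N$ and from the fiber orientation fixed by $\phi_\alpha$, so the two oriented integrals agree chart-by-chart. Summing over $\alpha$ and using $\sum_\alpha \rho_\alpha \equiv 1$ on $N$ then gives the claimed global equality.
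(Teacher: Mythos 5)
Your argument is correct. The paper itself gives no proof of this lemma --- it is quoted from Wannerer (\emph{Integral geometry of unitary area measures}, Equation (7)) --- and your localization-plus-Fubini argument is the standard way to establish it: the partition-of-unity reduction via the projection formula is legitimate (for a $0$-form $\rho_\alpha$ there is no sign in \eqref{eq_projection_formula}), the coordinate computation of $\pi_*$ from Definition \ref{def_push-forward} correctly isolates the summands of full fiber degree, and the hypotheses that both $E$ and $\pi^{-1}(N)$ carry the \emph{local product} orientation are exactly what make the chart-by-chart identification with iterated Lebesgue integrals sign-free, as you note at the end.
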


\begin{lemma}[{\cite[4.3.2, 4.3.8]{federer_book}}] \label{lemma_sard}
Let $f:M\to N$ be a smooth and surjective map with $M,N$ compact and oriented smooth manifolds with $\dim M=m$ and $\dim N=n$. Then $f^{-1}(y)$ is an orientable smooth submanifold for almost every $y\in N$. 
Let the orientation of $M$ be given by a smooth $m$-vector field $\xi$, and let the orientation of $N$ be given by the smooth form $dy \in \Omega^n(N)$. If the orientation of $f^{-1}(y)$ is given by the smooth $(m-n)$-vector field $\xi \llcorner f^*dy$ and if $\mu$ denotes the measure given by $[[Y]] \llcorner dy$, then
\begin{displaymath}
\int_N\varphi(y)\left(\int_{f^{-1}(y)}\omega\right)d\mu(y)=\int_Mf^{*}(\varphi\wedge dy)\wedge\omega,
\end{displaymath}
for every continuous function $\varphi:N\to\R$ and every $\omega\in\Omega^{m-n}(M)$.
\end{lemma}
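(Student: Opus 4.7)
The plan is the classical coarea-style argument for smooth forms: use Sard's theorem to control the critical locus, submersion normal form to set up a local Fubini, and a partition of unity to assemble the global identity. First, Sard's theorem yields that the set $\Sigma \subset M$ of critical points has image $f(\Sigma) \subset N$ of Lebesgue measure zero, so the outer integral on the left-hand side makes sense once we verify that for each regular value $y$ the fiber $f^{-1}(y)$ is a smooth oriented $(m-n)$-submanifold. The implicit function theorem makes it a smooth submanifold, and the $(m-n)$-vector $\xi \llcorner f^*dy$ provides a nowhere-vanishing smooth orientation: a pointwise linear-algebra check shows that the contraction annihilates any factor lying outside $\ker df_x$, so the result lies in $\Lambda^{m-n}\ker df_x = \Lambda^{m-n} T_x f^{-1}(y)$, and it is non-zero because $df_x$ is surjective at regular points.

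The second observation is that the right-hand integrand vanishes identically on the critical set. Indeed, at any $x \in \Sigma$ the covectors $df^1_x, \dots, df^n_x$ (in any local coordinates $(y^i)$ on $N$ with $dy = dy^1\wedge\cdots\wedge dy^n$) are linearly dependent, hence $f^*dy = df^1\wedge\cdots\wedge df^n$ vanishes at $x$. Therefore $f^*(\varphi\wedge dy)\wedge\omega \equiv 0$ on $\Sigma$, and the right-hand side is effectively an integral over the open regular set $M\setminus\Sigma$, which matches the fact that the left-hand side only ``sees'' regular values.

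Third, I verify the identity locally on $M\setminus\Sigma$. At each regular point the submersion theorem furnishes a positively-oriented chart with coordinates $(y^1,\ldots,y^n,z^1,\ldots,z^{m-n})$ in which $f$ is the projection $(y,z)\mapsto y$; by possibly flipping a sign one may arrange that $\xi = \partial_{y^1}\wedge\cdots\wedge\partial_{y^n}\wedge\partial_{z^1}\wedge\cdots\wedge\partial_{z^{m-n}}$ and $f^*dy = dy^1\wedge\cdots\wedge dy^n$. A direct computation then yields $\xi \llcorner f^*dy = \partial_{z^1}\wedge\cdots\wedge\partial_{z^{m-n}}$, so the induced fiber orientation is the standard $z$-slice orientation; only the $dz^1\wedge\cdots\wedge dz^{m-n}$-component of $\omega$ survives the wedge with $f^*dy$; and Fubini's theorem on $\R^m$ reduces the identity to an equality of iterated integrals. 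Choosing a smooth partition of unity $\{\rho_\alpha\}$ on $M$ subordinate to a cover by such submersion charts together with an open neighborhood of $\Sigma$, applying the local identity to each $\rho_\alpha \omega$ whose support avoids $\Sigma$, and invoking step two to discard the remaining $\rho_\alpha$ patches the local identities into the global one.

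The main obstacle is sign bookkeeping. The identification of $\xi \llcorner f^*dy$ with the standard $z$-slice orientation in the submersion chart, together with its compatibility with the local-product orientation convention used in Lemma \ref{lemma_push_forward_inverse} and Definition \ref{def_push-forward}, requires a careful comparison of contraction conventions (including the $(-1)^{n(m-n)}$ that distinguishes left and right interior products). Once the sign conventions are pinned down consistently with those underlying the fiber-integration formula, the remaining computation is essentially Fubini; but any inconsistency here would propagate into the applications of this lemma to push-forwards of differential forms later in the paper.
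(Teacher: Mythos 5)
The paper offers no proof of this lemma at all: it is quoted from Federer, 4.3.2 and 4.3.8, where it arises as a special case of the slicing theory for normal currents under Lipschitz maps combined with the coarea formula. Your direct argument --- Sard's theorem, submersion normal form, Fubini, partition of unity --- is the natural self-contained proof in the smooth setting, and its overall structure is sound. What the citation to Federer buys is precisely the systematic handling of the orientation and sign conventions that you flag as the main obstacle (Federer essentially \emph{defines} the slice so that an identity of this type is its characterizing property), whereas in your route they must be checked by hand; your chart computation $\xi\llcorner f^*dy=\partial_{z^1}\wedge\cdots\wedge\partial_{z^{m-n}}$ does settle this once a convention for $\llcorner$ is fixed, so I regard that part as acceptable rather than as a gap.

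Two points in your write-up do need repair. First, the assembly step as stated does not work: you cover $M$ by submersion charts ``together with an open neighborhood of $\Sigma$'' and propose to discard the latter patch by invoking step two, but step two only gives vanishing of the integrand \emph{on} $\Sigma$, not on a neighborhood of it, so the integral over that patch need not vanish (and $\Sigma$ itself may have positive measure in $M$). The fix is standard: since the integrand vanishes pointwise on the closed set $\Sigma$, the right-hand side equals the integral over the open set $R=M\setminus\Sigma$; take a countable, locally finite partition of unity on $R$ subordinate to submersion charts and interchange the sum with the integrals by dominated convergence. Second, after this interchange the right-hand side becomes $\int_N\varphi(y)\bigl(\int_{f^{-1}(y)\cap R}\omega\bigr)\,d\mu(y)$, and one must observe that for $\mu$-almost every $y$ (namely the regular values) one has $f^{-1}(y)\subset R$, so that the inner integral agrees with $\int_{f^{-1}(y)}\omega$; as written, your argument does not account for regular points of $M$ lying over critical values.
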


\begin{lemma}[\cite{fu90}] \label{lemma_fu} 
Let $(E,\pi,M,F)$ and $(E',\pi',M',F)$ be oriented fiber bundles with the same fiber $F$. Assume that $\overline f:E'\to E$ is a bundle map covering a smooth map $f:M'\to M$ and that there exists an open cover $\{U\}$ of $M$ with local trivializations $\varphi:\pi^{-1}(U)\to U\times F$ and $\varphi':\pi^{-1}(f^{-1}(U))\to f^{-1}(U)\times F$ compatible with the orientations of the bundle such that  $\varphi \circ\overline f=(f\times\id_F) \circ \varphi'$, then 
$$f^*\circ \pi_*=\pi'_*\circ\overline f^*.$$
\end{lemma}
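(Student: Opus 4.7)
The plan is to reduce the identity to a computation in a single local trivialization and then patch globally via a partition of unity. Both $f^* \circ \pi_*$ and $\pi'_* \circ \overline{f}^*$ are $\R$-linear operators $\Omega^{d+r}(E) \to \Omega^{d}(M')$ that depend locally on the input form. So, choosing a partition of unity $\{\rho_U\}$ on $M$ subordinate to the given cover $\{U\}$ and using that $\overline{f}^*$ preserves the support condition $\supp \omega \subset \pi^{-1}(U)$, it suffices to verify the identity on forms $\omega$ whose support lies in $\pi^{-1}(U)$ for a single chart $U$.

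On such a chart, I transport the situation via $\varphi$ and $\varphi'$ to the trivial product bundles $U \times F$ and $f^{-1}(U) \times F$. By the compatibility hypothesis $\varphi \circ \overline{f} = (f \times \id_F) \circ \varphi'$, the bundle map becomes the product map $f \times \id_F$. Locally on $U \times F$, any differential form is a finite sum of decomposable forms $p_1^*\alpha \wedge p_2^*\beta$, where $p_1, p_2$ are the projections onto $U$ and $F$, $\alpha \in \Omega^*(U)$, $\beta \in \Omega^*(F)$. By Fubini's theorem applied in the local product orientation,
\[
\pi_*(p_1^*\alpha \wedge p_2^*\beta) = \begin{cases} \left(\int_F \beta\right) \alpha & \text{if } \deg \beta = r, \\ 0 & \text{otherwise,} \end{cases}
\]
and analogously on $f^{-1}(U) \times F$. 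Since $(f \times \id_F)^*(p_1^*\alpha \wedge p_2^*\beta) = p_1^*(f^*\alpha) \wedge p_2^*\beta$, applying both sides of the claimed identity to such a decomposable form produces $\left(\int_F \beta\right) f^*\alpha$.

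By $\R$-linearity this gives the identity on $\pi^{-1}(U)$, and summing the contributions from $\rho_U \omega$ over the partition of unity then yields the identity on all of $\Omega^{d+r}(E)$. The only delicate point is orientation: the two fiberwise integrals $\int_F \beta$ carry matching signs only if $\varphi$ and $\varphi'$ induce the same orientation on the fiber $F$, which is precisely the content of \emph{compatible with the orientations of the bundle} in the hypothesis; without it, one would pick up a sign along each connected component of $M'$ where the orientations disagree. I do not expect any serious obstacle beyond this bookkeeping, as the lemma is essentially a naturality statement for fiber integration along product maps.
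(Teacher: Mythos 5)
The paper itself offers no proof of this lemma: it is quoted from Fu \cite{fu90} and used as a black box, so there is no internal argument to compare yours against. That said, your proof is the standard naturality argument for fiber integration (localize by a partition of unity subordinate to $\{U\}$, transport through the compatible trivializations so that $\overline f$ becomes $f\times\id_F$, and reduce to a Fubini computation), and it is correct in outline. The partition-of-unity step works because $\pi^*\rho_U$ pulls out of $\pi_*$ by the projection formula \eqref{eq_projection_formula} and $\overline f^*(\pi^*\rho_U)=(\pi')^*(f^*\rho_U)$, and you correctly identify the orientation compatibility of $\varphi,\varphi'$ as the only place a sign could enter.

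One step is overstated: a form on $U\times F$ is \emph{not} in general a finite sum of decomposables $p_1^*\alpha\wedge p_2^*\beta$ with $\alpha\in\Omega^*(U)$, $\beta\in\Omega^*(F)$, since the coefficient functions depend jointly on the base and fiber variables. This is harmless but should be repaired: either work in local coordinates with forms $g(x,y)\,p_1^*dx_I\wedge p_2^*dy_J$, where the same computation gives $\pi_*\bigl(g\,p_1^*dx_I\wedge p_2^*dy_J\bigr)=\bigl(\int_F g(x,y)\,dy_J\bigr)dx_I$ when $|J|=r$ and $0$ otherwise, and observe that pulling back in $x$ commutes with integrating in $y$; or note that both $f^*\circ\pi_*$ and $\pi'_*\circ\overline f^*$ are continuous in the $C^\infty$ topology and that product-coefficient forms are dense. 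One should also verify that the explicit local formula agrees with the duality characterization of $\pi_*$ in Definition \ref{def_push-forward} under the chosen sign convention, but since the identical convention is used on both bundles, any global sign cancels in the asserted identity.
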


\subsection{Smooth flag area measures}

Let us introduce our main object of study. Let $e_1$ be the first standard vector in $\R^n$ and let $\O(n-1)$ be its stabilizer. Let $M:=G/H$ be a homogeneous space, where $G$ is a closed subgroup of $\O(n)$ and where $H$ is a closed subgroup of $G \cap \O(n-1)$. Let $\Pi:V \times M \to SV, (x,gH) \mapsto (x,ge_1)$, which is a fiber bundle. We let $r$ denote the dimension of the fiber and $m:=n-1+r$. 

\begin{definition}
A translation invariant valuation $\Phi$ with values in the space of signed measures on $M$ is called a \emph{smooth flag area measure} if there is a translation invariant differential form $\omega \in \Omega^m(V \times M)$ such that for every $f \in C^\infty(M)$ we have 
\begin{displaymath}
\int_{M} f d\Phi(K,\cdot)=\int_{\nc(K)} \Pi_*(f \omega),
\end{displaymath}
where $\nc(K)$ denotes the normal cycle of $K$. The space of smooth flag area measures is denoted by $\Area_{G/H}$, and $\Area^G_{G/H}$ denotes the subspace of smooth flag area measures equivariant under the action of $G$ given by $(g\Phi)(K,f)=\Phi(g^{-1}K,g^*f)$.
\end{definition}

\begin{definition}
Let $H_1 \subset H_2 \subset G$ be subgroups. Let $\hat \Pi: V \times G/H_1 \to V \times G/H_2$ 
denote the projection map. The \emph{globalization map} 
\begin{displaymath}
\glob: \Area_{G/H_1} \to \Area_{G/H_2}
\end{displaymath}
is defined by 
\begin{displaymath}
\int_{G/H_2} f d\glob \Phi(K,\cdot)=\int_{G/H_1} \hat\Pi^* f d\Phi(K,\cdot), \quad f \in C^\infty(G/H_2).
\end{displaymath}
\end{definition}

\begin{lemma} \label{lemma_globalization}
Let $\hat \Phi \in \Area^G_{G/H_1}$ be represented by $\hat \omega$. Then $\glob \hat \Phi$ is represented by $\hat \Pi_*\hat \omega$.
\end{lemma}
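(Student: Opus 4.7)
My plan is to prove the lemma by unfolding the definition of $\glob$ and reducing everything to two basic tools: functoriality of fiber integration under composition of bundles, and the projection formula \eqref{eq_projection_formula}. Denoting the bundle projections by $\Pi_1 \colon V \times G/H_1 \to SV$ and $\Pi_2 \colon V \times G/H_2 \to SV$, the key geometric input is that $\Pi_1 = \Pi_2 \circ \hat\Pi$, since on the second factor $\hat\Pi$ is the natural projection $G/H_1 \to G/H_2$ and both $\Pi_i$ take the first factor $x$ together with the class $gH_i$ to $(x, ge_1) \in SV$.

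The calculation then proceeds as follows. Fix $f \in C^\infty(G/H_2)$ and $K \in \mathcal{K}(V)$. By definition of globalization,
\begin{displaymath}
\int_{G/H_2} f \, d(\glob \hat\Phi)(K, \cdot) = \int_{G/H_1} \hat\Pi^* f \, d\hat\Phi(K, \cdot).
\end{displaymath}
Since $\hat\Phi$ is represented by $\hat\omega$, the right hand side equals $\int_{\nc(K)} (\Pi_1)_*\bigl(\hat\Pi^* f \cdot \hat\omega\bigr)$. Functoriality of push-forward gives $(\Pi_1)_* = (\Pi_2)_* \circ \hat\Pi_*$, so
\begin{displaymath}
(\Pi_1)_*\bigl(\hat\Pi^* f \cdot \hat\omega\bigr) = (\Pi_2)_* \hat\Pi_*\bigl(\hat\Pi^* f \cdot \hat\omega\bigr) = (\Pi_2)_*\bigl(f \cdot \hat\Pi_* \hat\omega\bigr),
\end{displaymath}
where the last equality is the projection formula \eqref{eq_projection_formula} applied to $\hat\Pi$ with the $0$-form $f$. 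Comparing with the definition of the flag area measure represented by $\hat\Pi_*\hat\omega$, this is exactly $\int_{G/H_2} f \, d\Phi'(K, \cdot)$ for $\Phi'$ the measure represented by $\hat\Pi_*\hat\omega$. Since $f$ was arbitrary, $\glob \hat\Phi = \Phi'$, as claimed.

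The main point to check carefully is that fiber integration is functorial along the composition $\Pi_2 \circ \hat\Pi$; this reduces, via local trivializations of $G/H_1 \to G/H_2$ (with fiber $H_2/H_1$) and $G/H_2 \to SV$ (with fiber $H_2/H$), to Fubini on a product of compact oriented manifolds, provided we install compatible orientations on all the fibers. This is where the sign convention of \cite{alvarez_fernandes} recalled in Definition~\ref{def_push-forward} has to be used consistently; once one fixes coherent orientations on $G/H_1$, $G/H_2$ and on the nested fibers $H_2/H_1 \hookrightarrow G/H_1 \to G/H_2$, the projection formula and the composition identity $(\Pi_1)_* = (\Pi_2)_* \circ \hat\Pi_*$ follow routinely from the definitions and Lemma~\ref{lemma_push_forward_inverse}.
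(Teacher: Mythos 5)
Your proof is correct and follows essentially the same route as the paper's: unfold the definition of $\glob$, use that $\hat\Phi$ is represented by $\hat\omega$, apply functoriality of push-forward to the composed projection $\Pi\circ\hat\Pi$, and finish with the projection formula \eqref{eq_projection_formula}. The paper compresses the functoriality and projection-formula steps into a single line, whereas you spell out the functoriality $(\Pi_1)_* = (\Pi_2)_*\circ\hat\Pi_*$ and its justification via local trivializations; this is a harmless elaboration of what the paper leaves implicit.
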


\proof
Let $\Pi:V \times G/H_2 \to SV$ be the projection map. For $f \in C^\infty(G/H_2)$ and $K \in \mathcal{K}(V)$, we obtain, by using \eqref{eq_projection_formula},
\begin{align*}
\int_{G/H_2} f d \glob \hat \Phi(K,\cdot) & =\int_{G/H_1} \hat \Pi^*f d \hat \Phi(K,\cdot)\\
& =\int_{\nc(K)} (\Pi \circ \hat \Pi)_*(\hat \Pi^*f \cdot \hat \omega)\\
& =\int_{\nc(K)} \Pi_* (f \cdot \hat \Pi_*\hat \omega).
\end{align*}
Hence $\glob \hat \Phi$ is represented by the form $\hat \Pi_* \hat \omega$.
\endproof

The space $\Omega^l(V \times M)^{\tr}$ of translation invariant forms admits a filtration as follows. For $(x,gH) \in V \times M, 0\leq j\leq l$, we define
 \begin{align*}
  \mathfrak{F}^{l,j}_{x,gH} & := \{\phi \in \largewedge^l T^*_{(x,gH)}(V \times M)\,: \\
  & \quad \quad \forall v_1,\ldots,v_j \in T_{(x,gH)} \Pi^{-1}(\Pi(x,gH)), \ \phi(v_1,\ldots,v_j,-)=0 \},\\
  \mathfrak{F}^{l,j} & :=\{\omega \in \Omega^l(V \times M)^{\tr} :\, 
   \forall (x,gH) \in V \times M, \ \omega|_{(x,gH)} \in \mathfrak{F}^{l,j}_{x,gH} \},\\
  \mathfrak{F}^{\bullet,j} & := \bigoplus_l \mathfrak{F}^{l,j}.
 \end{align*}
 Then 
 \begin{displaymath}
  \Omega^l(V \times M)^{\tr} = \mathfrak{F}^{l,r+1} \supset \mathfrak{F}^{l,r} \supset \ldots \supset \mathfrak{F}^{l,0}=\{0\}.
 \end{displaymath} 

 \begin{proposition} \label{prop_kernel}
 A form $\tau \in \Omega^m(V \times M)^{\tr}$ induces the trivial flag area measure if and only if 
  \begin{displaymath}
   \tau \in \langle \Pi^*\alpha, \Pi^* d\alpha,\mathfrak{F}^{m,r}\rangle.
  \end{displaymath}
 \end{proposition}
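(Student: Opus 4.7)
The plan is to treat the two implications separately, reducing the ``only if'' direction to Wannerer's characterization of trivial area measures combined with a fiberwise localization argument. For the ``if'' direction, by Lemma \ref{lemma_push_forward_inverse},
$$\int_M f\, d\Phi_\tau(K,\cdot) = \int_{\nc(K)} \Pi_*(f\tau) = \int_{\Pi^{-1}(\nc(K))} f\tau,$$
and every generator of the claimed ideal restricts to zero on $\Pi^{-1}(\nc(K))$: since $\nc(K)$ is Legendrian in $SV$, both $\alpha$ and $d\alpha$ vanish along it, hence so do $\Pi^*\alpha$ and $\Pi^*d\alpha$ on the preimage; and a form in $\mathfrak{F}^{m,r}$ cannot give a nonzero top form on the $m$-dimensional submanifold $\Pi^{-1}(\nc(K))$, because the latter contains the $r$-dimensional vertical fibers of $\Pi$ and the form vanishes on any $r$-fold vertical evaluation.

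For the ``only if'' direction, I first convert the hypothesis into an area-measure condition on $SV$. Writing $\pi : SV \to S^{n-1}$ for the sphere bundle projection and $\pi_M : M \to S^{n-1}$, $gH \mapsto ge_1$, the projection formula gives, for every $h \in C^\infty(S^{n-1})$ and $f \in C^\infty(M)$,
$$\int_{\nc(K)} \pi^*h \cdot \Pi_*(f\tau) = \int_{\nc(K)} \Pi_*(\pi_M^*h \cdot f \cdot \tau) = \Phi_\tau(K, \pi_M^*h \cdot f) = 0.$$
Hence $\Pi_*(f\tau)$ induces the trivial area measure on $S^{n-1}$ for every $f$, and Wannerer's characterization \cite{wannerer_area_measures} forces $\Pi_*(f\tau) \in \langle \alpha, d\alpha\rangle$ in $\Omega^{n-1}(SV)^{\tr}$. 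Choosing a horizontal complement to $\ker d\Pi$, I decompose $\tau = \tau_{\max} + \tau_0$ with $\tau_0 \in \mathfrak{F}^{m,r}$ and $\tau_{\max}$ of pure vertical order $r$; since $\Pi_*(f\tau_0) = 0$, the same containment holds for $\tau_{\max}$.

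Next I localize: fixing $p_0 = (x_0,v_0) \in SV$ and $g_0 H \in \pi_M^{-1}(v_0)$, I take approximate delta functions $f_\varepsilon \in C^\infty(M)$ supported in shrinking neighborhoods of $g_0 H$ with total integral tending to $1$. By the definition of fiber integration, $\Pi_*(f_\varepsilon \tau_{\max})|_{p_0}$ converges to the contraction of $\tau_{\max}|_{(x_0,g_0 H)}$ with the vertical volume form. Since $\langle \alpha|_{p_0}, d\alpha|_{p_0}\rangle$ is a closed linear subspace of $\Lambda^{n-1} T^*_{p_0} SV$, the limit lies there, and wedging back with the vertical volume form yields $\tau_{\max}|_{(x_0,g_0H)} \in \langle \Pi^*\alpha, \Pi^*d\alpha\rangle$ pointwise. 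The main obstacle is the final step of upgrading this pointwise ideal membership to membership as a section. Here I exploit $G$-equivariance: since $\Pi$ is $G$-equivariant and $\alpha$ is $G$-invariant on $SV$, the forms $\Pi^*\alpha$ and $\Pi^*d\alpha$ are $G$-invariant, so the ideal they generate in $\Lambda^\bullet T^*(V\times M)^{\tr}$ is a $G$-invariant sub-bundle over the homogeneous space $M = G/H$. By transitivity of $G$ on $M$ this sub-bundle has constant rank, and a smooth translation-invariant section whose pointwise values lie in it is itself a section of the ideal. This delivers $\tau_{\max} \in \langle \Pi^*\alpha, \Pi^*d\alpha\rangle$ and thus $\tau \in \langle \Pi^*\alpha, \Pi^*d\alpha, \mathfrak{F}^{m,r}\rangle$.
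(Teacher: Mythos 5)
The paper itself gives no proof of this proposition; it only cites \cite[Theorem~2.3]{abardia_bernig_dann} (which treats the special flag manifold $\Flag_{1,p+1}$) and asserts that the argument adapts. Your proposal fills in an actual argument, and the route you choose is a natural and essentially correct one: reduce the flag statement to Wannerer's kernel characterization for ordinary area measures by pushing forward along $\Pi$, then localize in the fiber to obtain the pointwise conclusion, and finally globalize. The ``if'' direction is clean and exactly right: both $\Pi^*\alpha$ and $\Pi^*d\alpha$ restrict to zero on $\Pi^{-1}(\nc(K))$ because the normal cycle is Legendrian, and an element of $\mathfrak F^{m,r}$ restricts to zero there because the $r$-dimensional vertical fibre sits inside the $m$-dimensional tangent space. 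In the ``only if'' direction, the reduction via the projection formula to $\Pi_*(f\tau)\in\langle\alpha,d\alpha\rangle$ for all $f$ is correct, as is the observation that only $\tau_{\max}$ (the component of maximal vertical degree $r$ relative to a chosen, and by averaging $G$-invariant, horizontal complement) survives push-forward. The delta-sequence limit indeed identifies the pointwise contraction of $\tau_{\max}$ with the vertical volume, this lies in the closed linear space $\langle\alpha|_{p_0},d\alpha|_{p_0}\rangle$, and because $\tau_{\max}$ is of pure type $(n-1,r)$ it is recovered by wedging the contraction back with the vertical volume form, so the pointwise membership $\tau_{\max}|_{(x_0,g_0H)}\in\langle\Pi^*\alpha,\Pi^*d\alpha\rangle$ follows.

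The one spot that deserves a sentence of justification rather than an assertion is the final globalization. ``A smooth translation-invariant section whose pointwise values lie in the sub-bundle is itself a section of the ideal'' elides two steps: first, that the $G$-invariant sub-bundle has constant rank (true by transitivity of $G$ on $M$, as you say), so a pointwise-valued section is automatically a smooth section of that sub-bundle; second, and what you actually need, that a smooth section of the sub-bundle can be \emph{expressed} as $\Pi^*\alpha\wedge\xi+\Pi^*d\alpha\wedge\psi$ with smooth, translation-invariant $\xi,\psi$. This requires that the bundle map $(\xi,\psi)\mapsto\Pi^*\alpha\wedge\xi+\Pi^*d\alpha\wedge\psi$ admits a smooth right inverse onto its image, which follows from constant rank plus compactness of $M$ via local splittings and a partition of unity. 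You should also pin down the exact statement you are invoking from \cite{wannerer_area_measures}: the kernel of $\Omega^{n-1}(SV)^{\tr}\to\Area$ is exactly $\langle\alpha,d\alpha\rangle$ (this is the $M=S^{n-1}$, $r=0$ instance of the proposition you are proving, i.e.\ $\mathfrak F^{n-1,0}=\{0\}$), and that statement is for arbitrary translation-invariant forms, not only the $G$-invariant ones — which you need, since $\Pi_*(f\tau)$ is not $G$-invariant in general. With these two points made explicit, the argument is complete and gives a genuine proof where the paper gives only a pointer.
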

 
\proof
In the special case $G=\O(n), H=\O(p) \times \O(q)$, this was shown in \cite[Theorem 2.3]{abardia_bernig_dann}. The proof can be easily adapted to the general case. 
\endproof

\subsection{Kinematic formulas}

If $G$ is transitive, then the space of smooth flag area measures is a quotient of the finite-dimensional space $\Omega^m(V \times M)^{\overline G}$ and hence finite-dimensional itself. If $G$ is not transitive, then $\Val^G$ is not finite-dimensional. Since we have a surjective map $\glob:\Area^G_{G/H} \to \Val^G$, $\dim \Area^G_{G/H}=\infty$ as well in this case. 

By using the definition of smooth flag area measures, we can restate the remaining part of Theorem~\ref{thm_existence1} as follows. 

\begin{theorem}\label{existence2}
Let $G$ be transitive. 
Let $\omega \in \Omega^{m}(V \times M)^{\overline G}$ and let $\{\omega_1,\dots,\omega_N\}$ be a basis of $\Omega^m(V \times M)^{\overline G}$. Then there exist constants $c_{k,l}$ such that
\begin{displaymath}
\int_{G} \int_{\nc(K+gL)} \Pi_*(\varphi \cdot (g^{-1})^*\psi \cdot \omega) dg=\sum_{k,l} c_{k,l} \int_{\nc(K)}\Pi_*(\varphi \omega_k) \int_{\nc(L)}\Pi_*(\psi\omega_l),
\end{displaymath}
where $\varphi,\psi \in C^\infty(M)$. 
\end{theorem}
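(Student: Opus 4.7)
The plan is to follow Wannerer's proof of the analogous statement for area measures \cite[Theorem 2.1]{wannerer_area_measures}, adapted to the present setting by replacing the bundle $SV$ with $V \times M$ and exploiting $\Pi: V \times M \to SV$. The geometric input is the standard description of the normal cycle of a Minkowski sum: for smooth strictly convex $K, L$ in general position, $\nc(K+L)$ is parametrized by the fiber product $\nc(K) \times_{S^{n-1}} \nc(L)$ via the sum map $((x,\xi),(y,\xi)) \mapsto (x+y,\xi)$. First I would restrict attention to smooth strictly convex $K$ and $L$ and apply Lemma~\ref{lemma_push_forward_inverse} to rewrite the left-hand side as
\begin{displaymath}
\int_G \int_{\Pi^{-1}(\nc(K)) \times_{S^{n-1}} \Pi^{-1}(\nc(gL))} \varphi(m_1) \cdot \psi(g^{-1} m_2) \cdot \sigma^*\omega \, dg,
\end{displaymath}
where $\sigma$ denotes the induced sum map on the lifted fiber product and $(x,m_1,y,m_2)$ is a generic point in it.

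Next, using the $G$-equivariant identification $\nc(gL) = g \cdot \nc(L)$, the fiber-product constraint $\Pi(x,m_1) = \Pi(y,m_2)$ becomes the condition $g\xi_2 = \xi_1$ after the change of variable, and Fubini together with the coarea formula (Lemma~\ref{lemma_sard}) applied to the map $G \to S^{n-1}$, $g \mapsto g\xi_2$, allows us to interchange $\int_G dg$ with the integration over $\Pi^{-1}(\nc(K)) \times \Pi^{-1}(\nc(L))$. For each fixed pair of lifted points, the inner $G$-integral reduces to a Haar integral over the stabilizer $G_{\xi_1} \subset G$, producing a translation-invariant, $\overline G$-equivariant ``kernel'' form $\Omega$ on $(V \times M) \times (V \times M)$. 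By transitivity of $G$ on $S^{n-1}$, the space of such forms of the relevant bidegree is finite-dimensional, and modulo the kernel characterized in Proposition~\ref{prop_kernel} it is spanned by the tensor products $\omega_k \boxtimes \omega_l$. Decomposing $\Omega$ in this basis produces the constants $c_{k,l}$, and substituting back yields the asserted identity for smooth strictly convex bodies. The general case then follows by Hausdorff continuity of both sides in $K$ and $L$, together with the continuity of the normal cycle.

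The main obstacle is the precise justification of the lifted fiber-product description of $\Pi^{-1}(\nc(K+gL))$, including the correct orientations and the verification that forms in $\langle \Pi^*\alpha, \Pi^*d\alpha, \mathfrak{F}^{m,r} \rangle$ do not contribute to the bilinear pairing after pushforward. This last step relies on careful bookkeeping of degrees in the vertical filtration $\mathfrak{F}^{\bullet,j}$ and on Proposition~\ref{prop_kernel} to ensure that the coefficient extraction is well-defined; handling the exceptional $g \in G$ for which $K$ and $gL$ fail to be in general position is a measure-zero issue that can be dealt with by the usual transversality argument.
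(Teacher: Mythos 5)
The overall plan is in the right spirit and closely mirrors the paper's proof (which follows Fu and Wannerer): reduce to smooth bodies, parametrize $\nc(K+gL)$ by a fiber product, interchange integrals using Lemmas~\ref{lemma_push_forward_inverse} and~\ref{lemma_sard}, decompose an invariant kernel form in a basis, and pass to general bodies by approximation. But the central geometric step is stated incorrectly in a way that actually breaks the argument outside the case $M=S^{n-1}$.

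You write the lifted cycle as the fiber product $\Pi^{-1}(\nc(K)) \times_{S^{n-1}} \Pi^{-1}(\nc(gL))$ and integrate $\sigma^*\omega$ (degree $m$) over it. That fiber product has dimension $2m-(n-1)=m+r$, where $r$ is the fiber dimension of $\Pi\colon V\times M\to SV$; this is $r$ too large. Correspondingly, the ``sum map'' $\sigma$ is not well defined on this space, since for a pair $((x,m_1),(y,m_2))$ with merely $\xi(m_1)=\xi(m_2)$ there is no canonical $M$-component for the image point. The correct description of $\Pi^{-1}(\nc(K+gL))$ forces a \emph{single} $M$-fiber over each normal direction: one needs the constraint $m_1=gm_2$ \emph{in $M$}, not merely $\xi(m_1)=g\xi(m_2)$ in $S^{n-1}$. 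This is precisely what the paper's incidence variety $E$ encodes through the condition $p_1=gp_2=p_3$. The constraint over $M$ has codimension $\dim M=m$, so the resulting space has dimension $m$ and the degrees match.

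The same error propagates to your claim that ``the inner $G$-integral reduces to a Haar integral over the stabilizer $G_{\xi_1}$.'' With the correct constraint, the fiber of $p\colon E\to(V\times M)^2$ is a coset of a conjugate of $H$, of dimension $\dim H$, whereas $\dim G_{\xi_1}=\dim(G\cap\O(n-1))\geq\dim H$. These coincide only when $H=G\cap\O(n-1)$, i.e., in Wannerer's original area-measure case. So the proposal reproduces the $M=S^{n-1}$ argument without actually adapting the incidence correspondence to the flag manifold $M=G/H$, which is exactly the new content that the paper's construction of $E$ supplies.

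Two smaller remarks: the decomposition of the kernel form into $\sum c_{k,l}\,\omega_k\otimes\omega_l$ only uses that $\Omega^m(V\times M)^{\overline G}$ is finite-dimensional; Proposition~\ref{prop_kernel} is not required, since the theorem asserts existence of some $c_{k,l}$, not uniqueness. And one should apply Lemma~\ref{lemma_fu} (as the paper does) to justify the commutation $\Pi^*\circ q'_* = q_*\circ(\Pi')^*$ in the passage from the normal-cycle parametrization on $SV$ to its pullback on $V\times M$; this step is glossed over in the proposal.
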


\begin{proof}[Proof of Theorem~\ref{existence2}]
We follow the ideas of Fu \cite{fu90} and Wannerer \cite{wannerer_area_measures}. We first assume that the convex bodies $K$ and $L$ have smooth boundaries.
 
We define 
\begin{displaymath}
E:=\{(g,(x_i,p_i)_{i=1,2,3}) \in G \times (V \times M)^3\,:\,
x_1+gx_2=x_3, p_1=gp_2=p_3\},
\end{displaymath}
and the maps
\begin{align*}
p:E & \to (V \times M) \times (V \times M)\\
(g,x_1,p_1,x_2,p_2,x_3,p_3) & \mapsto ((x_1,p_1),(x_2,p_2)),\\
q:E & \to G \times V \times M\\
(g,x_1,p_1,x_2,p_2,x_3,p_3) & \mapsto (g,x_3,p_3).
\end{align*}

We define $E'$ together with maps $p':E' \to SV \times SV, q':E' \to G \times SV$ in an analogous way, using $S^{n-1}$ instead of $M$. We then have  a map 
\begin{align*}
\Pi':E & \to E', \\
(g,(x_i,p_i)) & \mapsto (g,\Pi(x_i,p_i)). 
\end{align*}

Let $q_1:E \to G, q_1':E' \to G$ be the projections on the first factors. Let $\Pi_1,\Pi_2:(V \times M) \times (V \times M) \to M$ be the projection maps onto the $M$-factors. Let $l_i:G \times V \times M \to M$ be defined by $l_1(g,x,p):=p, l_2(g,x,p):=g^{-1}p$. Then, $\Pi_i \circ p=l_i \circ q,i=1,2$ and we have the diagram 
\begin{displaymath}
\xymatrix{
& M & \\
(V \times M) \times (V \times M) \ar[d]^{\Pi \times \Pi} \ar[ur]^{\Pi_i} & E  \ar[l]_-p \ar[r]^-q \ar[d]^{\Pi'} & G \times V \times M \ar[d]^{\mathrm{id} \times \Pi} \ar[ul]_{l_i} \\
SV \times SV & E' \ar[d]_{q_1'} \ar[l]_-{p'} \ar[r]^-{q'} & G \times SV \\
& G & 
}
\end{displaymath}

Given $\bar h \in \overline G$, we let $h$ be the part in $G$, that is, $h(x)=\overline h(x)-\overline h(0)$. 
The group $\overline G \times \overline G$ acts on $V \times M \times V \times M, E$, and $G \times V \times M$ as follows:
\begin{align*}
(\bar h,\bar k) \cdot (g,x_1,p_1,x_2,p_2,x_3,p_3) & := (hgk^{-1},\bar hx_1,h p_1,\bar kx_2, kp_2,\bar h x_3,h p_3),\\
(\bar h,\bar k) \cdot (x_1,p_1,x_2,p_2) & := (\bar h x_1,h p_1,\bar k x_2, kp_2),\\
(\bar h,\bar k) \cdot (g,x_3,p_3) & := (hgk^{-1},\bar h x_3, h p_3). 
\end{align*}
The actions on $SV \times SV, E'$, and $G \times SV$ are defined analogously. Then the maps in the above diagram are $\bar G \times \bar G$-equivariant. 

Using Lemma \ref{lemma_fu} we obtain that $p_* q^*(dg \wedge \omega)$ is a $\bar G \times \bar G$-invariant form, i.e., an element of $\Omega^*(V \times M)^{\bar G} \times \Omega^*(V \times M)^{\bar G}$. The component of bidegree $(m,m)$ can be written as $\sum_{k,l} c_{k,l} \omega_k \wedge \omega_l$.

The normal cycle $K+gL$ is given by 
\begin{displaymath}
\nc(K+gL)= q'_*\left[(q_1')^{-1}(g) \cap (p')^* (\nc(K) \times \nc(L))\right].
\end{displaymath}

Applying the pull-back $\Pi^*$, Lemma \ref{lemma_fu} yields 
\begin{align*}
\Pi^* \nc(K+gL) & = \Pi^* (q')_* \left[(q_1')^{-1}(g) \cap (p')^* (\nc(K) \times \nc(L))\right]\\
& = q_* (\Pi')^* \left[(q_1')^{-1}(g) \cap (p')^* (\nc(K) \times \nc(L))\right]\\
& = q_* \left[q_1^{-1}(g) \cap (\Pi')^* (p')^* (\nc(K) \times \nc(L))\right]\\
& = q_*\left[q_1^{-1}(g) \cap p^* (\Pi^*\nc(K) \times \Pi^*\nc(L))\right].
\end{align*}

We set $F:=p^{-1} (\Pi^{-1}\nc(K) \times \Pi^{-1}\nc(L)) \subset E$. 

Using the previous computation and Lemmas \ref{lemma_push_forward_inverse} and \ref{lemma_sard} (applied to $(q_1)|_F:F \to G$) we deduce that 
\begin{align*}
\int_G\int_{\nc(K+gL)}\Pi_* & (\varphi \cdot (g^{-1})^* \psi \cdot \tau)dg =\int_G\int_{\Pi^{-1} \nc(K+gL)} \varphi \cdot (g^{-1})^*\psi \cdot \tau dg \\
&= \int_G \int_{q_1^{-1}(g) \cap F} q^*(\varphi \cdot (g^{-1})^* \psi) \tau dg\\
&= \int_F q^*(\varphi \cdot (g^{-1})^* \psi \wedge \tau) \wedge dg \\
&= \int_{\Pi^{-1}(\nc(K)) \times \Pi^{-1}(\nc(L))} p_*\left[q^*(\varphi \cdot (g^{-1})^*\psi \cdot \tau) \wedge dg\right] \\
&= \int_{\Pi^{-1}(\nc(K)) \times \Pi^{-1}(\nc(L))} p_*\left[q^* l_1^*\varphi \cdot q^*l_2^*\psi \cdot q^*\tau \wedge dg\right] \\
&= \int_{\Pi^{-1}(\nc(K)) \times \Pi^{-1}(\nc(L))} p_*\left[p^* \Pi_1^*\varphi \cdot p^*\Pi_2^*\psi \cdot q^*\tau \wedge dg\right] \\
& =\int_{\Pi^{-1}(\nc(K) )\times \Pi^{-1}(\nc(L))} \Pi_1^*\varphi \cdot \Pi_2^*\psi \cdot p_*(q^*\tau \wedge dg)\\
& =\int_{\Pi^{-1}(\nc(K))\times\Pi^{-1}(\nc(L))} \Pi_1^*\varphi \cdot \Pi_2^*\psi \cdot \left(\sum_{k,l}c_{k,l} \tau_k \wedge \tau_l\right)\\
&=\sum_{k,l} c_{k,l}\int_{\Pi^{-1}(\nc(K))}\varphi \tau_k \int_{\Pi^{-1}(\nc(L))}\psi\tau_l\\
&=\sum_{k,l}c_{k,l}\int_{\nc(K)}\Pi_*(\varphi\tau_k)\int_{\nc(L)}\Pi_*(\psi\tau_l).
\end{align*}

The general case of not necessarily smooth convex bodies follows by approximation as in \cite{wannerer_area_measures}. 
\end{proof}

\section{Dual flag area measures}\label{sec_dual}

In this section we introduce the notion of smooth dual flag area measure, which generalizes the notion of smooth dual area measure from \cite{bernig_dual_area_measures}. Similarly to the case of dual area measures, we define a convolution product on the space of smooth dual area measures. 

The space $\Omega^m(V \times M)^{\tr}$ of translation invariant forms is endowed with the usual Fr\'echet topology of uniform convergence on compact subsets of all partial derivatives and there is a surjection $\Omega^m(V\times M)^{\tr} \to \Area_{G/H}$. We endow the latter space with the quotient topology and denote by $\Area_{G/H}^*$ the dual space to $\Area_{G/H}$.

Analogously to the case of area measures, we define a convolution product on a subspace of $\Omega^n(\V \times M)^{\tr}$.

First, we introduce an operator on the space of differential forms on $V \times M$ which will play the analogous role, and is defined analogously, to $*_1$ from \cite{bernig_fu06} and \cite{bernig_dual_area_measures}. We denote this operator again by $*_1$.
  
\begin{definition} 
The linear operator 
\begin{displaymath}
*_1:\Omega^{*}(V\times M)^{\tr} \to\Omega^*(V \times M)^{\tr}
\end{displaymath}
is given by
\begin{displaymath}
*_1(\pi_1^*\tau_1\wedge \pi_2^* \tau_2)=(-1)^{\binom{n-k}{2}} \pi_1^*(*\tau_1)\wedge \pi_2^*\tau_2,
\end{displaymath}
where $\tau_1\in \bigwedge^k\V$, $\tau_2\in\Omega^*(M)$ and $*:\bigwedge^k \V\to\bigwedge^{n-k}V$ denotes the Hodge star operator. Here $\pi_1:V\times M \to V$ and $\pi_2:V \times M \to M$ denote the natural projections. 
 \end{definition}
 
\begin{lemma}\label{lemma_def_J_conv}
The space 
\begin{align*}
\mathcal{J}^{n,tr} & =\mathcal{J}^{n,tr}(V \times M)\\
& :=\{\tau\in\Omega^n(\V \times M)^{\tr}\,:\,\tau \in \mathfrak{F}^{n,1}, \Pi^*\alpha \wedge \tau=0, \Pi^* d\alpha \wedge\tau=0\}
\end{align*}
is closed under the operation
\begin{equation}\label{conv_flag_dual}
\tau *\tau':=*_1^{-1}(*_1\tau\wedge *_1\tau').
\end{equation}
\end{lemma}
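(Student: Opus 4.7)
The plan is to decompose the verification of $\tau*\tau'\in \mathcal{J}^{n,tr}$ into three pieces: that $\tau*\tau'$ is a well-defined $n$-form, that it lies in $\mathfrak{F}^{n,1}$, and that it is annihilated by wedging with $\Pi^*\alpha$ and $\Pi^*d\alpha$.

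For the first two pieces, the key observation is that $*_1$ is an involution (up to a sign depending on bidegree) that acts only on the $V$-factor. A direct bidegree count shows that if $\tau$ has bidegree $(k,n-k)$ and $\tau'$ has bidegree $(k',n-k')$, then $*_1\tau \wedge *_1\tau'$ has bidegree $(2n-k-k',\,2n-k-k')$, which lies in the image of $*_1$ applied to a form of total degree $n$; hence $*_1^{-1}$ is well-defined on the product and $\tau*\tau'$ is an $n$-form. Since the fiber directions of $\Pi$ lie purely in the $M$-factor and $*_1$ acts trivially there, $*_1$ preserves the filtration $\mathfrak{F}^{\bullet,1}$; and $\mathfrak{F}^{\bullet,1}$ is closed under wedge products because contraction by a fiber vector is a graded derivation. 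Hence $\tau,\tau'\in\mathfrak{F}^{n,1}$ implies $*_1\tau\wedge *_1\tau'\in\mathfrak{F}^{\bullet,1}$, and therefore $\tau*\tau'\in\mathfrak{F}^{n,1}$.

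For the third piece, the main idea is to identify the operators $\sigma \mapsto *_1(\Pi^*\alpha \wedge \sigma)$ and $\sigma \mapsto *_1(\Pi^*d\alpha \wedge \sigma)$ as (graded) derivations on $\Omega^*(V\times M)^{\tr}$. Writing $\Pi^*\alpha = \sum_i c_i\,\pi_1^*e^i$, where $c_i=c_i(gH)$ is a smooth function on $M$ and $\{e^i\}$ is an orthonormal basis of $V^*$, the elementary identity $*(e^i \wedge \tau_1) = (-1)^{|\tau_1|} i_{e_i}(*\tau_1)$ on forms on $V$ yields $*_1(\Pi^*\alpha \wedge \sigma) = \pm\,i_\xi(*_1\sigma)$, where $\xi$ is the vector field whose value at $(x,gH)$ is $ge_1\in V$ (lifted along the $V$-factor of $T(V\times M)$). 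Since $i_\xi$ is a graded derivation of degree $-1$ and $i_\xi(*_1\tau) = 0 = i_\xi(*_1\tau')$ by the hypotheses on $\tau,\tau'$, we obtain $i_\xi(*_1\tau \wedge *_1\tau') = 0$, which upon applying $*_1^{-1}$ translates to $\Pi^*\alpha \wedge (\tau*\tau') = 0$. Analogously, $\Pi^*d\alpha = \sum_i \pi_2^*(dc_i)\wedge\pi_1^*e^i$ gives $*_1(\Pi^*d\alpha \wedge \sigma) = \pm D(*_1\sigma)$ with $D(\mu) := \sum_i \pi_2^*(dc_i) \wedge i_{e_i}\mu$; a short sign check (the $(-1)^{|a|}$ from the Leibniz rule for $i_{e_i}$ is cancelled by the sign from commuting the $1$-form $\pi_2^*(dc_i)$ past $a$) shows that $D$ is an even derivation, $D(a\wedge b) = D(a)\wedge b + a\wedge D(b)$. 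Hence $D(*_1\tau\wedge *_1\tau') = D(*_1\tau)\wedge *_1\tau' + *_1\tau\wedge D(*_1\tau') = 0$, yielding $\Pi^*d\alpha \wedge (\tau*\tau') = 0$. The main technical step, and essentially the only one requiring care, is the identification of these two operators as derivations via the Hodge-star-contraction identity; the argument runs parallel to the area-measure case treated in \cite{bernig_dual_area_measures}.
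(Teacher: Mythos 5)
Your argument is correct and follows essentially the same route as the paper: the $\mathfrak{F}^{n,1}$-closure via the filtration and the derivation property of contraction is exactly the paper's argument, while for the $\Pi^*\alpha$ and $\Pi^*d\alpha$ conditions the paper simply defers to \cite{bernig_dual_area_measures}, whose argument is precisely the one you have reconstructed (conjugating by $*_1$ to turn wedging with $\Pi^*\alpha$ and $\Pi^*d\alpha$ into the graded derivations $i_\xi$ and $D$). In short, you have the same proof, with the deferred details written out.
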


\begin{proof}
Since $*_1$ does not affect the fiber of $\Pi$, we have $\tau \in \mathfrak{F}^{\bullet,j}$ if and only if $*_1\tau \in \mathfrak{F}^{\bullet,j}$. Moreover, if $\rho_i \in \mathfrak F^{\bullet,j_i}, i=1,2$, then $\rho_1 \wedge \rho_2 \in \mathfrak{F} ^{\bullet,j_1+j_2-1}$. Hence, if $\tau,\tau'\in\mathfrak{F}^{n,1}$, we have $\tau *\tau'\in\mathfrak{F}^{n,1}$.  

The other two conditions can be proved as in the case $p=0$ (see \cite{bernig_dual_area_measures}). 
\end{proof}

\begin{definition}
Let $\tau \in \Omega^{2n-1+r}(V \times M)^{\tr}$. Then $(\pi_1)_*\tau \in \Omega^n(V)^{\tr}$ is a multiple of the Lebesgue measure. We denote this multiple by $\int \tau$.
\end{definition}

\begin{definition} \label{def_smooth_dual_area}
 A dual flag area measure $L \in \Area_{G/H}^*$ is called smooth if there exists  $\tau \in \mathcal{J}^{n,tr}$ such that 
\begin{equation} \label{eq_paing_dual_area}
  \langle L,\Phi\rangle = \int \tau \wedge \omega,
 \end{equation}
whenever $\omega \in \Omega^{n-1+r}(V \times M)^{\tr}$ represents $\Phi \in \Area_{G/H}$. The space of smooth dual flag area measures is denoted by $\Area_{G/H}^{*,sm}$.
\end{definition}

\begin{lemma} \label{lemma_glob_dual}
Let $H_1 \subset H_2 \subset G$ be subgroups and $\hat \Pi: V \times G/H_1 \to V \times G/H_2$ the projection map.
Let $L \in \Area_{G/H_2}^{G*}$ be represented by the form $\tau \in \mathcal J^{n,tr}(V \times G/H_2)$. Then $\glob^*L \in \Area_{G/H_1}^{G*}$ is represented by $\hat \Pi^*\tau \in \mathcal J^{n,tr}(V \times G/H_1)$. In particular, $\glob^*$ is injective.
\end{lemma}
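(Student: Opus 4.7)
The plan is threefold: verify that $\hat\Pi^*\tau$ lies in $\mathcal{J}^{n,tr}(V\times G/H_1)$, check that it represents $\glob^*L$ via Definition~\ref{def_smooth_dual_area}, and deduce injectivity of $\glob^*$.

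\textbf{Membership.} Translation invariance of $\hat\Pi^*\tau$ is automatic since $\hat\Pi$ acts as the identity on the $V$-factor. Writing $\Pi_i:V\times G/H_i\to SV$ for the two bundle projections, one has $\Pi_1 = \Pi_2 \circ \hat\Pi$, hence $\Pi_1^*\alpha = \hat\Pi^*\Pi_2^*\alpha$ (and similarly for $d\alpha$), so
\begin{displaymath}
\Pi_1^*\alpha \wedge \hat\Pi^*\tau = \hat\Pi^*(\Pi_2^*\alpha\wedge\tau) = 0,
\end{displaymath}
and likewise for $d\alpha$. For the filtration condition $\hat\Pi^*\tau \in \mathfrak{F}^{n,1}$, any vertical $v\in\ker d\Pi_1$ is mapped by $d\hat\Pi$ into $\ker d\Pi_2$ (again from $\Pi_2\circ\hat\Pi=\Pi_1$), so $(\hat\Pi^*\tau)(v,\cdot) = \tau(d\hat\Pi\cdot v,\cdot) = 0$ by the assumption $\tau\in\mathfrak{F}^{n,1}(V\times G/H_2)$.

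\textbf{Pairing.} Let $\hat\Phi \in \Area^G_{G/H_1}$ be represented by $\hat\omega$. By Lemma~\ref{lemma_globalization}, $\glob\hat\Phi$ is represented by $\hat\Pi_*\hat\omega$, and the projection formula~\eqref{eq_projection_formula} gives $\hat\Pi_*(\hat\Pi^*\tau\wedge\hat\omega) = \tau\wedge\hat\Pi_*\hat\omega$. Since the projection onto $V$ on $V\times G/H_1$ factors through $\hat\Pi$, the operator $\int$ (fiber integration over the $M$-factor, reading off the Lebesgue coefficient) yields the same scalar on either side. Thus
\begin{displaymath}
\langle \glob^*L,\hat\Phi\rangle = \langle L,\glob\hat\Phi\rangle = \int \tau\wedge\hat\Pi_*\hat\omega = \int \hat\Pi^*\tau\wedge\hat\omega,
\end{displaymath}
which is the required representation.

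\textbf{Injectivity.} It suffices to show $\glob:\Area^G_{G/H_1}\to\Area^G_{G/H_2}$ is surjective. Given $\omega$ representing $\Phi\in\Area^G_{G/H_2}$, pick a $G$-invariant form $\beta$ pulled back from $G/H_1$, of degree $\dim H_2/H_1$, with $\hat\Pi_*\beta=1$; existence follows by averaging a volume form on the compact homogeneous fiber $H_2/H_1$. Then $\hat\omega := \hat\Pi^*\omega\wedge\beta$ is translation and $G$-invariant, and $\hat\Pi_*\hat\omega = \omega$ by the projection formula, so the associated $\hat\Phi \in \Area^G_{G/H_1}$ satisfies $\glob\hat\Phi=\Phi$. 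The main delicate step is this construction of $\beta$, but it is routine for a compact homogeneous fiber; everything else follows from naturality of pullback and the projection formula.
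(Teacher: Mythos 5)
Your pairing computation is exactly the paper's proof: combine Lemma~\ref{lemma_globalization} with the defining property of fiber integration (Definition~\ref{def_push-forward}, equivalently the projection formula~\eqref{eq_projection_formula}) to pass from $\int \tau\wedge\hat\Pi_*\hat\omega$ to $\int \hat\Pi^*\tau\wedge\hat\omega$. Your two additions go beyond what the paper writes. The membership verification that $\hat\Pi^*\tau\in\mathcal{J}^{n,tr}(V\times G/H_1)$ (naturality of $\alpha,d\alpha$ under $\Pi_2\circ\hat\Pi=\Pi_1$ and compatibility of $d\hat\Pi$ with the vertical distributions) is correct and a worthwhile detail the paper leaves implicit; one does need it so that the displayed pairing actually shows $\hat\Pi^*\tau$ \emph{represents} $\glob^*L$ in the sense of Definition~\ref{def_smooth_dual_area}. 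For injectivity the paper offers no argument; the shortest route is that $\glob^*L=0$ forces $\hat\Pi^*\tau=0$ by non-degeneracy of the pairing on $G$-invariant translation-invariant forms, and then $\tau=0$ because $\hat\Pi^*$ is injective for the surjective submersion $\hat\Pi$. You instead deduce injectivity of $\glob^*$ from surjectivity of $\glob$, constructing a $G$-invariant vertical form $\beta$ with $\hat\Pi_*\beta=1$ and taking $\hat\omega=\hat\Pi^*\omega\wedge\beta$, so that $\hat\Pi_*\hat\omega=\omega$ by the projection formula. This is equally valid and arguably more informative, since it records the useful fact that $\glob$ is onto. The one point worth spelling out is that such a $G$-invariant vertical volume form $\beta$ exists only when the fiber $H_2/H_1$ of $\hat\Pi$ is $G$-equivariantly orientable (equivalently, $H_1$ preserves orientation on $\mathfrak{h}_2/\mathfrak{h}_1$); this is a standing hypothesis of the paper's fiber-integration framework (cf.\ Definition~\ref{def_push-forward} and the use of the fiber volume form $\rho$ in Section~\ref{sec_bases_flag_case}), so it is fine in context, but it is exactly the place where ``averaging a volume form'' could silently produce zero and deserves a sentence.
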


\proof
By Lemma \ref{lemma_globalization} and Definition \ref{def_push-forward}, we have 
\begin{align*}
\langle \glob^*L,\hat \Phi\rangle &= \langle L,\glob \hat \Phi\rangle = \int \tau \wedge \hat \Pi_*\hat \omega = \int \hat\Pi^*\tau \wedge \hat\omega. 
\end{align*}

\endproof

\begin{definition} \label{def_convolution_dual}
 Let $L_1,L_2 \in \Area_{G/H}^{*,sm}$ be represented by forms $\tau_1,\tau_2 \in \mathcal{J}^{n,tr}$. Then we define $L_1 * L_2 \in \Area_{G/H}^{*,sm}$ as the smooth dual area measure represented by $\tau_1 * \tau_2=*_1^{-1} (*_1\tau_1 \wedge *_1\tau_2) \in \mathcal{J}^{n,tr}$.
\end{definition}

\begin{theorem} \label{thm_ftaig}
Let $G \subset \O(n)$ be a closed subgroup acting transitively on the unit sphere and $H \subset G \cap \O(n-1)$ be a closed subgroup. Then the following diagram commutes
\begin{displaymath}
 \xymatrix{
 \Area_{G/H}^{*,sm} \otimes \Area_{G/H}^{*,sm} \ar[d]^{q_G \otimes q_G} \ar[r]^-{*} & \Area_{G/H}^{*,sm} \ar[d]^{q_G}\\
 \Area_{G/H}^{G*} \otimes \Area_{G/H}^{G*} \ar[r]^-{A^*} & \Area_{G/H}^{G*}
 }
\end{displaymath}
Here $q_G$ is the map transposed to the inclusion $\Area^G_{G/H} \hookrightarrow \Area_{G/H}$.
\end{theorem}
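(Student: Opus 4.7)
The plan is to unwind the commutativity of the diagram into an explicit integral identity and then verify that identity via the fiber-bundle computation used in the proof of Theorem~\ref{existence2}. By definition of $q_G$ and $A^*$, commutativity of the diagram is equivalent to
\begin{displaymath}
\langle L_1 * L_2, \Phi\rangle = \sum_{k,l} c_{k,l}\,\langle L_1, \Phi_k\rangle\,\langle L_2, \Phi_l\rangle
\end{displaymath}
for all $L_1, L_2 \in \Area^{*,sm}_{G/H}$ and every $\Phi \in \Area^G_{G/H}$, where $\{\Phi_1,\dots,\Phi_N\}$ is a basis of $\Area^G_{G/H}$ and $A\Phi = \sum c_{k,l}\Phi_k \otimes \Phi_l$. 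Choosing $\bar G$-invariant representatives $\omega, \omega_k \in \Omega^m(V \times M)^{\bar G}$ of $\Phi, \Phi_k$ and representatives $\tau_i \in \mathcal{J}^{n,tr}$ of $L_i$, this in turn is equivalent to the form-level identity
\begin{displaymath}
\int (\tau_1 * \tau_2) \wedge \omega = \sum_{k,l} c_{k,l} \int \tau_1 \wedge \omega_k \int \tau_2 \wedge \omega_l,
\end{displaymath}
where each $\int$ is the scalar extracted by the top-degree integration of Section~\ref{sec_dual}. The independence of the left- and right-hand sides from the choice of representatives is guaranteed by Proposition~\ref{prop_kernel} together with the defining conditions of $\mathcal{J}^{n,tr}$.

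Next, I invoke the diagram chase in the proof of Theorem~\ref{existence2}: the constants $c_{k,l}$ appear as the coefficients of the bidegree-$(m,m)$ component of the $\bar G \times \bar G$-invariant form $p_*q^*(dg \wedge \omega)$ on $(V \times M)^2$, with $E, p, q$ as defined there. By the projection formula and Lemma~\ref{lemma_fu}, the right-hand side of the target identity can be written as
\begin{displaymath}
\int_{(V \times M)^2} (\tau_1 \boxtimes \tau_2) \wedge p_*q^*(dg \wedge \omega) = \int_E p^*(\tau_1 \boxtimes \tau_2) \wedge q^*(dg \wedge \omega),
\end{displaymath}
where $\tau_1 \boxtimes \tau_2$ denotes the external wedge product on $(V \times M)^2$. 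Performing the fibre integration along $q_1 \colon E \to G$ first and then along the three $V$-directions, the linear constraint $x_1 + gx_2 = x_3$ on $E$ together with translation invariance collapses the three $V$-integrations into a single convolution integral on $V$, tensored with a pointwise wedge along the $M$-factor.

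The main obstacle is the final identification of this collapsed integral with $\int (\tau_1 * \tau_2) \wedge \omega = \int *_1^{-1}(*_1 \tau_1 \wedge *_1 \tau_2) \wedge \omega$. The Hodge star $*_1$ on the $V$-factor is precisely the Fourier-type duality that converts additive Minkowski convolution on $V$ into a pointwise operation in the dual basis, and its appearance here is the area-measure analogue of the identification of Alesker's convolution product with the additive kinematic operator on translation-invariant valuations~\cite{bernig_fu06}. For $M = S^{n-1}$ this identification was carried out in~\cite{bernig_dual_area_measures}. Because $\bar G$ acts on $V \times M$ by an isometric product action that respects the splitting into translation and rotation parts, and the Hodge star on $V$ is $\O(n)$-equivariant, the $V$-integration decouples from the $M$-integration; hence the argument of~\cite{bernig_dual_area_measures} applies verbatim to our setting, yielding the form-level identity and thus the commutativity of the diagram.
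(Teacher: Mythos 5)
Your reduction of the diagram to the form-level identity
\begin{displaymath}
\int (\tau_1 * \tau_2) \wedge \omega = \sum_{k,l} c_{k,l} \int \tau_1 \wedge \omega_k \int \tau_2 \wedge \omega_l
\end{displaymath}
is correct, and it is also right that the constants $c_{k,l}$ are read off from the $(m,m)$-component of $p_*q^*(dg \wedge \omega)$ on $(V\times M)^2$. But the step you yourself call ``the main obstacle'' is never actually carried out, and the way you try to dispose of it introduces a genuine gap. You assert that the identification of the collapsed integral over $E$ with $\int *_1^{-1}(*_1\tau_1 \wedge *_1\tau_2)\wedge\omega$ ``was carried out in \cite{bernig_dual_area_measures}'' for $M=S^{n-1}$ and ``applies verbatim.'' That is not what happens in \cite{bernig_dual_area_measures}: there, as here, the intertwining of $*_1$-convolution with $A^*$ is obtained indirectly, by passing through $\Gamma$-valued ($\Gamma \subset C^\infty(G)$ or $\Gamma=\Sym^l V$) tensor valuations via moment maps $M^\Gamma$, using Poincar\'e duality $\widehat{\pd}$ and the Rumin operator $D$, and then invoking the known ftaig for tensor valuations; there is no direct fiber-integration computation over $E$ identifying the answer with the $*_1$-product. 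So you cannot cite that paper as having done the integral computation you sketch, and your proof effectively has no argument for its central step.

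There is a second problem with the ``applies verbatim'' claim even at the structural level. Your proposed reduction points in the wrong direction: $S^{n-1}=G/(G\cap\O(n-1))$ is the case of the \emph{largest} admissible $H$, and the globalization $\glob^*\colon \Area_{S^{n-1}}^{G*}\to\Area_{G/H}^{G*}$ is injective but far from surjective for smaller $H$. Thus knowing the theorem for $M=S^{n-1}$ does not imply it for general $M=G/H$. The paper instead first proves the statement for rotation measures ($H=\{1\}$, the \emph{smallest} $H$, hence largest $M$), precisely because $\glob^*\colon\Area_{G/H}^{G*}\hookrightarrow\Area_{G/\{1\}}^{G*}$ is injective and intertwines both $A^*$ and $*$ (using Lemma~\ref{lemma_glob_dual} and the fact that $\hat\Pi^*$ commutes with $*_1$); this reduction step is entirely missing from your sketch. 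To complete your direct approach you would have to either (a) actually carry out the fiber-bundle computation over $E$ showing that the $V$-directions produce exactly the $*_1$-convolution — a nontrivial computation along the lines of \cite{bernig_fu06} that neither you nor the cited reference performs in this setting — or (b) switch to the moment-map/Poincar\'e-duality argument the paper uses, in which case your first two paragraphs become superfluous.
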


We need some preparation before proving the theorem.

\begin{proposition}
Let $G$ be as in the theorem and let $(\Gamma,\rho)$ be a finite-dimen\-sional $G$-module. The group $G$ acts on $\Val \otimes \Gamma$ by $(g\mu)(K):=\rho(g)\mu(g^{-1}K)$. Then the space $(\Val \otimes \Gamma)^G$ is finite-dimensional and contained in $(\Val^{sm} \otimes \Gamma)^G$.
\end{proposition}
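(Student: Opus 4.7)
The statement has two pieces: finite-dimensionality of $(\Val\otimes\Gamma)^G$, and the inclusion into $(\Val^{sm}\otimes\Gamma)^G$. I would handle the inclusion first, since it is a general representation-theoretic fact. Any $G$-fixed element $\eta\in\Val\otimes\Gamma$ spans a trivial, hence finite-dimensional, $G$-invariant subspace; on any finite-dimensional $G$-invariant subspace of a continuous Fr\'echet $G$-representation the action is automatically smooth, so $\eta$ is a smooth vector of the diagonal $G$-action on $\Val\otimes\Gamma$. To conclude, I would identify the smooth vectors of $\Val\otimes\Gamma$ with $\Val^{sm}\otimes\Gamma$: choosing a basis $(\gamma_j)$ of $\Gamma$ and writing $\eta=\sum_j \mu_j\otimes\gamma_j$, the identity $g\eta=\sum_{i,j}\rho_{ij}(g)(g\mu_j)\otimes\gamma_i$, with matrix coefficients $\rho_{ij}$ smooth on $G$, shows that smoothness of $g\mapsto g\eta$ in $\Val\otimes\Gamma$ is equivalent to smoothness of each $g\mapsto g\mu_j$ in $\Val$.

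For the finite-dimensionality, my strategy is to reduce to the classical theorem cited in the introduction that $\Val^G$ is finite-dimensional when $G$ acts transitively on the sphere. Decomposing the finite-dimensional $G$-module $\Gamma$ into irreducibles $\Gamma=\bigoplus_i\Gamma_i$ and using the identification $(\Val\otimes\Gamma)^G\cong\bigoplus_i\Hom_G(\Gamma_i^*,\Val)$, the claim reduces to showing that $\Hom_G(\sigma,\Val)$ is finite-dimensional for each irreducible $G$-representation $\sigma$. The $G$-equivariant McMullen decomposition $\Val=\bigoplus_{k=0}^n \Val_k$ lets me work degree by degree, and the inclusion already proved (applied to $\Val\otimes\sigma^*$ in place of $\Val\otimes\Gamma$) lets me replace $\Val_k$ by $\Val_k^{sm}$. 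I would then invoke Alesker's identification of the even and odd parts of $\Val_k^{sm}$ with subspaces of the smooth sections of a $G$-equivariant finite-rank bundle over an appropriate Grassmannian, together with Peter--Weyl and Frobenius reciprocity for the compact group $G$, to conclude finite multiplicity of $\sigma$.

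The delicate step is precisely this last multiplicity claim. For $\sigma$ trivial it is the Alesker/Bernig theorem already invoked; for a general irreducible $\sigma$, the obstruction is that transitivity of $G$ on $S^{n-1}$ does not imply transitivity on $\Gr_k(V)$ (as for instance when $G=\U(n/2)$ acts on $\Gr_2$ through K\"ahler angles), so one cannot present the Grassmannian as a single $G$-orbit and run a clean Frobenius reciprocity. Handling this requires the full structure of $\Val^{sm}$ as a $\GL(n,\R)$-representation and a branching argument from $\O(n)$ to $G$.
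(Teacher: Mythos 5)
Your argument for the inclusion has a substantive gap, and it sits exactly where the theorem's content lies. In this paper, as in the valuation literature it builds on, $\Val^{sm}$ denotes the smooth vectors of $\Val$ \emph{for the $\GL(n,\R)$-action}, not for the $G$-action. Your argument only shows that a $G$-fixed $\eta$ is a smooth vector for the $G$-action; that is automatic (any fixed vector in a continuous Fr\'echet $G$-representation is a $G$-smooth vector) and carries no information about $\Val^{sm}$. The space of $G$-smooth vectors of $\Val$ for a compact $G\subset\O(n)$ is vastly larger than $\Val^{sm}$ --- it contains every $G$-finite vector --- so the identification you invoke in the sentence ``I would identify the smooth vectors of $\Val\otimes\Gamma$ with $\Val^{sm}\otimes\Gamma$'' is false if ``smooth'' means $G$-smooth, and unproved if it means $\GL(n,\R)$-smooth. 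Showing that $G$-invariance (for transitive $G$) forces $\GL(n,\R)$-smoothness genuinely requires Alesker's irreducibility theorem, and the remainder of your argument does not supply that ingredient.

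The finite-dimensionality half of your proposal is a plan rather than a proof: you openly flag the multiplicity claim $\dim\Hom_G(\sigma,\Val)<\infty$ as the ``delicate step'' and leave it to a branching argument you do not carry out. In addition, the reduction you propose (``replace $\Val_k$ by $\Val_k^{sm}$'') leans on the inclusion that has the gap above, so the circle does not close. The paper's proof is a short density-and-closedness argument that gets both conclusions simultaneously and avoids any branching analysis: Alesker's irreducibility theorem gives density of $\Val^{sm}\otimes\Gamma$ in $\Val\otimes\Gamma$; averaging an approximating sequence over $G$ keeps it convergent and lands it in $(\Val^{sm}\otimes\Gamma)^G$; and the latter space is finite-dimensional because it is a quotient of the (manifestly finite-dimensional, for transitive $G$) space of translation- and $G$-invariant $\Gamma$-valued forms on the sphere bundle, hence closed, hence contains the limit $\mu$. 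That single chain both proves the inclusion and establishes finite-dimensionality, which is the mechanism your proposal is missing.
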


\proof
Let $\mu \in (\Val \otimes \Gamma)^G$. By Alesker's irreducibility theorem \cite{alesker_mcullenconj01} we may approximate $\mu$ by a sequence $\mu_i \in \Val^{sm} \otimes \Gamma$. Averaging with respect to Haar measure on $G$ we find an approximating sequence $\tilde \mu_i \in (\Val^{sm} \otimes \Gamma)^G$. But the latter space is finite-dimensional, since it is a quotient of the space of translation- and $G$-invariant, $\Gamma$ valued smooth differential forms, which is obviously finite-dimensional. It follows that $\mu$ belongs to $(\Val^{sm} \otimes \Gamma)^G$. 
\endproof

\begin{proposition}[Kinematic formulas for tensor valuations]
Let $(\Gamma_i,\rho_i)$ be finite-dimensional $G$-modules. If $\mu_1,\ldots,\mu_k$ is a basis of $(\Val \otimes \Gamma_1)^G$ and $\phi_1,\ldots,\phi_l$ is a basis of $(\Val \otimes \Gamma_2)^G$, then for every $\tau \in (\Val \otimes \Gamma_1 \otimes \Gamma_2)^G$ there are constants $c_{ij}^\tau$ such that 
 \begin{displaymath}
  \int_G (\mathrm{id} \otimes \rho_2(g^{-1})) \tau(K+gL)dg=\sum_{i,j} c_{i,j}^\tau \mu_i(K) \otimes \phi_j(L).
 \end{displaymath}
We thus obtain a cocommutative coassociative coproduct 
\begin{displaymath}
a^{\Gamma_1,\Gamma_2}: (\Val \otimes \Gamma_1 \otimes \Gamma_2)^G  \to (\Val \otimes \Gamma_1)^G \otimes (\Val \otimes \Gamma_2)^G.
\end{displaymath}
\end{proposition}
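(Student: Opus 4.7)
The plan is to define the bi-valuation
\[
f(K,L) := \int_G (\id_{\Gamma_1} \otimes \rho_2(g^{-1}))\,\tau(K+gL)\,dg \ \in\ \Gamma_1 \otimes \Gamma_2,
\]
verify two independent equivariance properties of $f$ (one in each slot), and then apply the previous proposition twice to decompose $f$ as a finite sum of tensor products.

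First, since $\tau \in (\Val \otimes \Gamma_1 \otimes \Gamma_2)^G$, the previous proposition applied to the $G$-module $\Gamma_1 \otimes \Gamma_2$ yields $\tau \in (\Val^{sm} \otimes \Gamma_1 \otimes \Gamma_2)^G$. Hence $\tau$ is smooth and $f$ is well defined, continuous, bi-additive and translation-invariant in each argument. The key computational step is the following pair of equivariances. Writing $\tau(hK+gL) = (\rho_1(h)\otimes\rho_2(h))\,\tau(K + h^{-1}gL)$ (from the $G$-invariance of $\tau$) and substituting $g' := h^{-1}g$ (left-invariance of Haar measure on $G$), one obtains
\[
f(hK, L) = (\rho_1(h) \otimes \id)\,f(K,L), \qquad h \in G.
\]
The dual substitution $g' := gh$ (right-invariance of Haar measure, valid since $G$ is compact) gives
\[
f(K, hL) = (\id \otimes \rho_2(h))\,f(K,L), \qquad h \in G.
\]

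Next, I would fix $L$ and view $f(\,\cdot\,, L)$ as an element of $\Val \otimes \Gamma_1 \otimes \Gamma_2$ with $G$ acting trivially on the second tensor factor. The first equivariance places $f(\,\cdot\,, L)$ in $(\Val \otimes \Gamma_1)^G \otimes \Gamma_2$, which by the previous proposition is finite-dimensional with a basis of the form $\{\mu_i \otimes e_s\}$ for any basis $\{e_s\}$ of $\Gamma_2$. Hence there exist unique elements $a_1(L),\ldots,a_k(L) \in \Gamma_2$ with $f(\,\cdot\,, L) = \sum_i \mu_i \otimes a_i(L)$. Letting $L$ vary, each $a_i : \mathcal K(V) \to \Gamma_2$ is a continuous translation-invariant valuation, and the second equivariance together with the uniqueness of the decomposition forces $a_i(hL) = \rho_2(h)\,a_i(L)$, that is $a_i \in (\Val \otimes \Gamma_2)^G$. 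Expanding in the basis $\phi_1,\ldots,\phi_l$ produces constants $c_{i,j}^\tau$ with $a_i = \sum_j c_{i,j}^\tau \phi_j$, which is the claimed kinematic formula.

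The coproduct formalism is then formal: cocommutativity of $a^{\Gamma_1, \Gamma_2}$ (up to the swap $\Gamma_1 \leftrightarrow \Gamma_2$) follows from the identity $K+gL = g(g^{-1}K + L)$ combined with the substitution $g \mapsto g^{-1}$ in the integral, and coassociativity reduces to the associativity of the Minkowski sum through a three-fold version of the same construction. The only technical point requiring care is invoking the previous proposition for the tensor-product module $\Gamma_1 \otimes \Gamma_2$, which is what legitimates all the integral manipulations by placing $\tau$ in the smooth class; beyond this I do not anticipate a genuine obstacle, only the bookkeeping of the two equivariances.
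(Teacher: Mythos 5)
Your proof is correct, and it is precisely the ``usual Hadwiger argument'' that the paper invokes (by reference to Bernig--Hug, Section 3.2) without spelling it out: establish the two equivariance properties of the bivaluation by invariance of $\tau$ and Haar-measure substitutions, then use finite-dimensionality of $(\Val\otimes\Gamma_1)^G$ to decompose in the $K$-slot and finite-dimensionality of $(\Val\otimes\Gamma_2)^G$ to decompose the resulting coefficient valuations in the $L$-slot. So you have simply filled in the details of the same approach; there is no genuine divergence.
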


\proof
This follows from the usual Hadwiger argument, compare \cite[Section 3.2]{bernig_hug} for a similar situation.  
\endproof

\begin{definition}
Let $(\Gamma_1,\rho_1),(\Gamma_2,\rho_2)$ be finite-dimensional $G$-modules. We define the \emph{convolution} $c:(\Val \otimes \Gamma_1)^G \otimes (\Val \otimes \Gamma_2)^G \to (\Val \otimes \Gamma_1 \otimes \Gamma_2)^G$ componentwise, i.e., $(\mu_1 \otimes f_1) * (\mu_2 \otimes f_2):=(\mu_1 * \mu_2) \otimes f_1 \otimes f_2$. 
\end{definition}

\begin{proposition}
Let $(\Gamma,\rho)$ be a finite-dimensional $G$-module. The bilinear map 
\begin{displaymath}
c: (\Val_k \otimes \Gamma)^G \otimes (\Val_{n-k} \otimes \Gamma^*)^G \to (\Val_0 \otimes \Gamma \otimes \Gamma^*)^G \cong \Val_0^G \cong \R
\end{displaymath} 
gives rise to an isomorphism 
$\widehat{\pd}:(\Val_k \otimes \Gamma)^G \to (\Val_{n-k}^* \otimes \Gamma)^G$, which is called \emph{Poincar\'e duality}. 
\end{proposition}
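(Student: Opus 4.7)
The plan is to deduce the result from Alesker's Poincar\'e duality for translation-invariant continuous valuations by tensoring with $\Gamma$ and descending to $G$-invariants via Haar averaging. By the preceding proposition, every $\mu \in (\Val_k \otimes \Gamma)^G$ in fact lies in $\Val_k^{sm} \otimes \Gamma$, and can therefore be written as a finite sum $\mu = \sum_i \mu_i \otimes f_i$ with $\mu_i \in \Val_k^{sm}$ and $f_i \in \Gamma$ linearly independent. Since each $\mu_i$ is smooth, Alesker convolution $\mu_i * (-) \colon \Val_{n-k} \to \Val_0 \cong \R$ is a continuous functional, i.e., an element of $\Val_{n-k}^*$, and we set
\begin{displaymath}
\widehat{\pd}(\mu) := \sum_i \bigl(\mu_i * (-)\bigr) \otimes f_i \in \Val_{n-k}^* \otimes \Gamma.
\end{displaymath}
The $G$-equivariance of convolution together with the $G$-invariance of $\mu$ forces $\widehat{\pd}(\mu)$ to lie in $(\Val_{n-k}^* \otimes \Gamma)^G$, and under the identification $(\Val_0 \otimes \Gamma \otimes \Gamma^*)^G \cong \Val_0^G \cong \R$ from the statement, $c(\mu, \nu \otimes f^*)$ is precisely the evaluation $\langle \widehat{\pd}(\mu), \nu \otimes f^*\rangle$.

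Injectivity of $\widehat{\pd}$ is immediate from Alesker's Poincar\'e duality: if $\widehat{\pd}(\mu) = 0$, then $\mu_i * \nu = 0$ for every $\nu \in \Val_{n-k}$ and every $i$; since the convolution pairing $\Val_k^{sm} \otimes \Val_{n-k} \to \R$ is non-degenerate in the smooth argument, each $\mu_i$ vanishes, and so $\mu = 0$. For surjectivity we argue by dimensions. Both $(\Val_k \otimes \Gamma)^G$ and $(\Val_{n-k} \otimes \Gamma^*)^G$ are finite-dimensional by the preceding proposition. Haar averaging over $G$ provides canonical identifications
\begin{displaymath}
(\Val_k^* \otimes \Gamma^*)^G \cong \bigl((\Val_k \otimes \Gamma)^G\bigr)^*, \qquad (\Val_{n-k}^* \otimes \Gamma)^G \cong \bigl((\Val_{n-k} \otimes \Gamma^*)^G\bigr)^*,
\end{displaymath}
so in particular all four spaces are finite-dimensional and occur in two dual pairs. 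Applying the same construction after swapping $(k,\Gamma)$ with $(n-k,\Gamma^*)$ yields an injection $\widehat{\pd}' \colon (\Val_{n-k} \otimes \Gamma^*)^G \hookrightarrow (\Val_k^* \otimes \Gamma^*)^G$, and combining it with $\widehat{\pd}$ gives
\begin{displaymath}
\dim(\Val_k \otimes \Gamma)^G \leq \dim(\Val_{n-k}^* \otimes \Gamma)^G = \dim(\Val_{n-k} \otimes \Gamma^*)^G \leq \dim(\Val_k \otimes \Gamma)^G.
\end{displaymath}
All dimensions agree, so $\widehat{\pd}$ is an isomorphism.

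The only delicate point in the argument is that the Alesker convolution $\mu * \nu$ is defined only when at least one factor is smooth. This is resolved once and for all by the automatic smoothness statement of the previous proposition, which guarantees that the components $\mu_i$ of a $G$-invariant tensor valuation can be chosen in $\Val^{sm}$; with that in hand, the proof reduces to a clean combination of Alesker-Poincar\'e duality with the standard averaging identifications described above.
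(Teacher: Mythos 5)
The paper states this proposition without a proof, and the argument it omits is exactly the standard one you reconstruct (paralleling the treatment of tensor valuations in the cited Wannerer paper). Your reasoning is correct: the automatic smoothness from the preceding proposition makes the convolution functionals $\mu_i * (-)$ well-defined and continuous, Alesker's Poincar\'e duality gives injectivity of $\widehat{\pd}$, and the Haar-averaging identifications $(\Val_{n-k}^*\otimes\Gamma)^G\cong\bigl((\Val_{n-k}\otimes\Gamma^*)^G\bigr)^*$ together with the symmetric injection $\widehat{\pd}'$ force all four dimensions to coincide, yielding surjectivity.
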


We remark that there is another version of Poincar\'e duality which uses the Alesker product instead of the convolution. Following \cite{wannerer_area_measures} we put a hat to distinguish between the two dualities. 

\begin{proposition}
Let $(\Gamma_1,\rho_1),(\Gamma_2,\rho_2)$ be finite-dimensional $G$-modules. Then the following diagram commutes
 \begin{displaymath}
 \xymatrix{
  (\Val \otimes \Gamma_1 \otimes \Gamma_2)^G \ar[r]^-{a^{\Gamma_1,\Gamma_2}} \ar[d]^{\widehat{\pd}} & (\Val \otimes \Gamma_1)^G \otimes (\Val \otimes \Gamma_2)^G \ar[d]^{\widehat{\pd} \otimes \widehat{\pd}}\\
 (\Val^* \otimes \Gamma_1 \otimes \Gamma_2)^G \ar[r]^-{c_G^*} & (\Val^* \otimes \Gamma_1)^G \otimes (\Val^* \otimes \Gamma_2)^G
 }
\end{displaymath}
\end{proposition}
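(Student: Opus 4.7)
The plan is to reduce the identity to the scalar Fundamental Theorem of Algebraic Integral Geometry (FTAIG), which in its tensor-valued form is essentially due to Fu and was used in a similar spirit in \cite{bernig_hug}.

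First, I would unpack the diagram by testing with elements $\nu_1 \otimes \nu_2 \in (\Val \otimes \Gamma_1^*)^G \otimes (\Val \otimes \Gamma_2^*)^G$, the space dual to the bottom-right corner via the convolution pairing. Under this test, the bottom route evaluates, by definition of $c_G^*$ and of $\widehat{\pd}$, to
$$\bigl\langle \widehat{\pd}(\tau),\, c(\nu_1 \otimes \nu_2) \bigr\rangle = \bigl\langle \widehat{\pd}(\tau),\, \nu_1 * \nu_2\bigr\rangle,$$
which is the degree-zero component of the triple convolution $\tau * (\nu_1 * \nu_2)$, contracted against the canonical pairing $(\Gamma_1\otimes\Gamma_2)\otimes(\Gamma_1^*\otimes\Gamma_2^*)\to\R$. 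Writing $a^{\Gamma_1,\Gamma_2}(\tau) = \sum_{i,j} c^{\tau}_{i,j}\,\mu_i \otimes \phi_j$ in fixed bases of $(\Val\otimes\Gamma_1)^G$ and $(\Val\otimes\Gamma_2)^G$, the top route evaluates to
$$\sum_{i,j} c^{\tau}_{i,j}\,\bigl\langle \widehat{\pd}(\mu_i), \nu_1\bigr\rangle\,\bigl\langle \widehat{\pd}(\phi_j), \nu_2\bigr\rangle,$$
each factor being a $\Gamma$-contracted, degree-zero convolution of the same type.

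Next, I would invoke the FTAIG in the $\Gamma$-valued setting: the additive kinematic coproduct $a^{\Gamma_1,\Gamma_2}$ is Poincar\'e dual to the componentwise convolution $c$. In the scalar case $\Gamma_1=\Gamma_2=\R$, this is precisely Fu's classical result, proved by writing the additive kinematic integral $\int_G \mu(K+gL)\,dg$ as a Minkowski-convolution integral over $G$ and comparing it term by term with the integral formula defining convolution on $V$. Because the $G$-action on $\Gamma_i$ enters here only as an equivariant twist that commutes with the integration over $G$, Fu's argument extends verbatim to tensor coefficients: applied to the scalar valuations obtained by pairing all the $\Gamma_i$-factors with dual vectors, it produces exactly the identity between the two expressions above.

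The main obstacle is purely notational: one must keep track of the three Poincar\'e-duality identifications $(\Val \otimes \Gamma)^{G,*} \cong (\Val \otimes \Gamma^*)^G \cong (\Val^* \otimes \Gamma)^G$ and ensure that the same identification is used on both routes of the diagram, so that the convolution-contractions match. Once the conventions are fixed and the componentwise definition of $c$ is used to pull the $\Gamma_i$-factors outside, the commutativity follows immediately from the scalar FTAIG.
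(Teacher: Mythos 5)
Your overall strategy---unwind the diagram to the pairing identity $\langle\widehat{\pd}(\tau),\nu_1*\nu_2\rangle=\sum_{i,j}c^{\tau}_{i,j}\langle\widehat{\pd}(\mu_i),\nu_1\rangle\langle\widehat{\pd}(\phi_j),\nu_2\rangle$ and invoke a Fundamental Theorem of Algebraic Integral Geometry---is the one the paper takes; the paper simply cites the analogous statement for area measures (Wannerer, Theorem 4.6). Your first paragraph, the unwinding, is correct.

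The gap is in the proposed reduction to the \emph{scalar} FTAIG. If $\tau\in(\Val\otimes\Gamma_1\otimes\Gamma_2)^G$ and $u_i\in\Gamma_i^*$, the contraction of $\tau$ against $u_1\otimes u_2$ is a smooth valuation that is generically \emph{not} $G$-invariant, so the scalar FTAIG---a statement about $\Val^G$, the only place the kinematic coproduct is even defined---cannot be applied to it. Moreover the $\Gamma$-valued additive kinematic integral $\int_G(\mathrm{id}\otimes\rho_2(g^{-1}))\tau(K+gL)\,dg$ carries a $g$-dependent twist; contracting against $u_2$ turns this into a $g$-dependent test covector, and the resulting integral is not $\int_G\phi(K+gL)\,dg$ for any fixed scalar $\phi$. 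So ``pairing with dual vectors'' does not reduce the tensor statement to the scalar one as a black box. What actually works, and is what the cited proofs in \cite{wannerer_area_measures} and \cite{bernig_hug} do, is to run the FTAIG argument from the start in the $\Gamma$-valued setting, carrying the module factors along and using exactly your observation that the $G$-action commutes with the $G$-integral; contraction happens only at the very end, once the identity holds at the level of $\Gamma_1\otimes\Gamma_2$-valued objects. In other words, your structural insight is the right one, but it justifies redoing the computation $\Gamma$-equivariantly rather than invoking the scalar theorem on paired valuations.
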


\proof
The proof is analogous to \cite[Theorem 4.6]{wannerer_area_measures}. 
\endproof

\begin{definition}
Let $\Gamma \subset C^\infty(G)$ be a finite-dimensional $G$-submodule. The map $M^\Gamma:\Area^G_{G/\{1\}} \to (\Val \otimes \Gamma^*)^G$ such that 
\begin{displaymath}
 \langle M^\Gamma \Phi(K),f\rangle=\Phi(K,f), \quad \Phi \in \Area^G_{G/\{1\}}, f \in \Gamma,
\end{displaymath}
is called \emph{moment map}.
\end{definition}

In the particular case $\Gamma=\Sym^lV$, the moment map was already used in \cite{wannerer_area_measures} and \cite{bernig_dual_area_measures}.

\begin{lemma} \label{lemma_kernel_moment_maps}
\begin{enumerate}
 \item[(i)]  \begin{displaymath}
  \bigcap_\Gamma \ker M^\Gamma=\{0\},
 \end{displaymath}
where the intersection is over all finite-dimensional subrepresentations of $C^\infty(G)$.
\item[(ii)] The linear span of the images of $(M^\Gamma)^*:(\Val^* \otimes \Gamma)^G \to \Area^{G*}_{G/\{1\}}$, where $\Gamma$ ranges over all finite-dimensional subrepresentations of $C^\infty(G)$, equals $\Area^{G*}_{G/\{1\}}$. 
\end{enumerate}
\end{lemma}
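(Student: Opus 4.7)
For (i), my plan is to invoke the Peter--Weyl theorem. Suppose $\Phi \in \bigcap_\Gamma \ker M^\Gamma$; unpacking the definition of the moment map, this says that $\Phi(K, f) = 0$ for every $K \in \mathcal{K}(V)$ and every $f$ lying in some finite-dimensional $G$-subrepresentation of $C^\infty(G)$. Since $G \subset \O(n)$ is closed, it is a compact Lie group, so Peter--Weyl guarantees that the union of all such finite-dimensional subrepresentations --- the algebra of representative functions --- is dense in $C^\infty(G)$ (in the Fr\'echet topology). Because $\Phi(K,\cdot)$ is a signed Radon measure on $G$ by the definition of smooth flag area measure, it defines a continuous functional on $C^\infty(G)$; vanishing on a dense subspace forces $\Phi(K,\cdot) = 0$ for every $K$, hence $\Phi = 0$.

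For (ii), I would simply dualize part (i). By Theorem~\ref{thm_existence1} (applied with $H = \{1\}$ and $G$ transitive on $S^{n-1}$), the space $\Area^G_{G/\{1\}}$ is finite-dimensional, so the intersection in (i) is already trivial for some finite collection $\Gamma_1, \dots, \Gamma_k$. Consequently the product map $\bigoplus_{i} M^{\Gamma_i} \colon \Area^G_{G/\{1\}} \to \bigoplus_i (\Val \otimes \Gamma_i^*)^G$ is injective between finite-dimensional spaces, so its transpose $\sum_i (M^{\Gamma_i})^*$ is surjective onto $\Area^{G*}_{G/\{1\}}$. Using the identification of the dual of $(\Val \otimes \Gamma^*)^G$ with $(\Val^* \otimes \Gamma)^G$ provided by Poincar\'e duality $\widehat{\pd}$ composed with the canonical pairing $\Gamma \otimes \Gamma^* \to \R$, this yields the desired conclusion for arbitrary $\Gamma$ (not just those in the chosen finite collection).

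The only point that will require careful bookkeeping is verifying that the transpose $(M^\Gamma)^*$, viewed through the Poincar\'e duality identification as a map $(\Val^* \otimes \Gamma)^G \to \Area^{G*}_{G/\{1\}}$, is correctly described; in particular one has to check that this identification is compatible with the transpose, so that its image really takes the advertised form. Once that piece of formalism is in place, the entire proof reduces to Peter--Weyl density in (i) plus elementary finite-dimensional duality in (ii), and I do not anticipate any substantive obstacle beyond this bookkeeping.
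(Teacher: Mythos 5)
Your proof is correct and takes essentially the same approach as the paper: part (i) is the identical Peter--Weyl density argument (the paper leaves implicit the continuity of $\Phi(K,\cdot)$ as a functional on $C^\infty(G)$, which you make explicit by noting it is a signed measure). For (ii), the paper argues by contradiction --- if the span $\mathcal H$ of the images were proper, some nonzero $\Phi\in\Area^G_{G/\{1\}}$ would annihilate it, whence $\langle\mu,M^\Gamma\Phi\rangle=0$ for all $\mu\in(\Val^*\otimes\Gamma)^G$, and Poincar\'e duality plus (i) force $\Phi=0$ --- whereas you extract a finite separating subfamily $\Gamma_1,\dots,\Gamma_k$ (possible since $\Area^G_{G/\{1\}}$ is finite-dimensional) and invoke surjectivity of the transpose of an injective map; this is a mild reorganization using exactly the same three ingredients, namely (i), finite-dimensionality, and the Poincar\'e-duality identification of $\bigl((\Val\otimes\Gamma^*)^G\bigr)^*$ with $(\Val^*\otimes\Gamma)^G$, and your flagged bookkeeping concern about that identification is absorbed in the paper by the same appeal to the Poincar\'e-duality propositions established earlier.
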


\proof
\begin{enumerate}
 \item[(i)] Suppose that $M^\Gamma \Phi=0$ for all $\Gamma$. This means that $\Phi(K,f)=0$ for all $f \in \Gamma$ and all $\Gamma$. Since $\bigoplus_\Gamma \Gamma$ is a dense subspace of $C^\infty(G)$ by the theorem of Peter-Weyl \cite{broecker_tomdieck}, this implies that $\Phi(K,f)=0$ for every $f \in C^\infty(G)$, hence $\Phi=0$.
\item[(ii)] Let $\mathcal H \subset \Area^{G*}_{G/\{1\}}$ be the linear span of the images of $(M^\Gamma)^*$. If $\mathcal H \neq \Area^{G*}_{G/\{1\}}$, then we find some $0 \neq \Phi \in \Area^G_{G/\{1\}}$ with $\langle L,\Phi\rangle=0$ for all $L \in \mathcal H$. But this implies that $\langle \mu,M^\Gamma \Phi\rangle=0$ for all $\mu \in (\Val^* \otimes \Gamma)^G$. By Poincar\'e duality, this means that $M^\Gamma \Phi=0$ for all $\Gamma$, hence $\Phi=0$ by (i). This is a contradiction. Hence $\mathcal H=\Area^{G*}_{G/\{1\}}$.  
\end{enumerate}
\endproof

Let $D:\Omega^{n-1}(SV) \to \Omega^n(SV)$ denote the Rumin operator \cite{rumin94}. 

\begin{lemma}
Let $\Gamma \subset C^\infty(G)$ be a finite-dimensional $G$-submodule. If $\mu \in (\Val_{n-k} \otimes \Gamma)^G$ is represented by $\eta=\sum_i \eta_i \otimes f_i, \eta_i \in \Omega^{n-1}(SV)^{\tr},f_i \in \Gamma$, then $(\widehat{\pd} \circ M^\Gamma)^* \mu \in \Area^{G*}_{G/\{1\}}$ is represented by 
\begin{displaymath}
 (-1)^k\Pi^*D\eta:=(-1)^k \sum_i f_i\Pi^*D\eta_i \in \Omega^n(M).
\end{displaymath}
\end{lemma}

\proof
Let $\Phi \in \Area^G_{G/\{1\}}$ be given by a form $\omega \in \Omega^m(V \times M)$ of bidegree $(k,m-k)$. Using \cite[Proposition 4.2]{wannerer_area_measures} we find 
\begin{align*}
\langle (\widehat{\pd} \circ M^\Gamma)^* \mu,\Phi\rangle & = \langle \widehat{\pd} \circ M^\Gamma \Phi,\mu\rangle \\
& = (-1)^k \sum_i \int  D\eta_i \wedge \Pi_*(f_i \omega)  \\
& = (-1)^k \int  \sum_i f_i\Pi^*D\eta_i \wedge \omega.
\end{align*}
\endproof

\begin{lemma}
 The additive kinematic formulas are compatible in the following sense. Let $\Gamma_1,\Gamma_2 \subset C^\infty(G)$ be finite-dimensional submodules and let $\Gamma_1 \cdot \Gamma_2$ be the (finite-dimensional) module generated by all $f_1 \cdot f_2, f_1 \in \Gamma_1,f_2 \in \Gamma_2$. Let $q:\Gamma_1 \otimes \Gamma_2 \to \Gamma_1 \cdot \Gamma_2, f_1 \otimes f_2 \mapsto f_1 \cdot f_2$ be the natural projection, Then the following diagram commutes 
 \begin{displaymath}
  \xymatrix{
 \Area^G_{G/\{1\}} \ar[r]^-A \ar[d]^-{M^{\Gamma_1 \cdot \Gamma_2}}  & \Area^G_{G/\{1\}} \otimes \Area^G_{G/\{1\}} \ar[dd]^{M^{\Gamma_1} \otimes M^{\Gamma_2}} \\
 (\Val \otimes (\Gamma_1 \cdot \Gamma_2)^*)^G   \ar[d]^-{\mathrm{id} \otimes q^*}& \\
 (\Val \otimes \Gamma_1^* \otimes \Gamma_2^*)^G \ar[r]^-{a^{\Gamma_1^*,\Gamma_2^*}}  & (\Val \otimes \Gamma_1^*)^G \otimes (\Val \otimes \Gamma_2^*)^G
 }
 \end{displaymath}
\end{lemma}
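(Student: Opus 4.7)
The plan is to show that both compositions in the diagram, evaluated at an arbitrary pair $(K,L)$ of convex bodies and paired with a decomposable test element $f_1 \otimes f_2 \in \Gamma_1 \otimes \Gamma_2$, produce the common expression
\[
\int_G \Phi(K + gL,\ f_1 \cdot g_* f_2)\, dg,
\]
where $(g_* f_2)(h) := f_2(g^{-1}h)$ is the left-translate of $f_2$, viewed as an element of $C^\infty(G) = C^\infty(G/\{1\})$. Since decomposable tensors span $\Gamma_1 \otimes \Gamma_2$ and $(K,L)$ is arbitrary, this equality of pairings will suffice.

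For the top-right route, write $A\Phi = \sum c_{k,l}\Phi_k \otimes \Phi_l$. The defining property of the moment map gives
\[
\bigl\langle (M^{\Gamma_1}\otimes M^{\Gamma_2})(A\Phi)(K,L),\ f_1 \otimes f_2 \bigr\rangle = \sum c_{k,l}\, \Phi_k(K,f_1)\,\Phi_l(L,f_2),
\]
and applying Theorem~\ref{thm_existence1} with $\varphi = f_1$, $\psi = f_2$ (in functional form, via $\mathbb{1}_{g\lambda} = g_*\mathbb{1}_\lambda$) identifies the right-hand side with $\int_G \Phi(K+gL,\ f_1 \cdot g_* f_2)\, dg$.

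For the left-down-right route, set $\tau := (\mathrm{id} \otimes q^*)\, M^{\Gamma_1\cdot\Gamma_2}\Phi \in (\Val \otimes \Gamma_1^* \otimes \Gamma_2^*)^G$. By construction $\langle \tau(K),\ f_1 \otimes f_2\rangle = \langle M^{\Gamma_1\cdot\Gamma_2}\Phi(K),\ q(f_1 \otimes f_2)\rangle = \Phi(K, f_1 \cdot f_2)$. Pairing the defining relation of $a^{\Gamma_1^*, \Gamma_2^*}(\tau)$ with $f_1 \otimes f_2$ yields
\[
\sum c_{ij}^\tau \,\langle \mu_i(K), f_1\rangle\, \langle \phi_j(L), f_2\rangle = \int_G \bigl\langle (\mathrm{id}\otimes\rho_2(g^{-1}))\tau(K+gL),\ f_1 \otimes f_2 \bigr\rangle\, dg.
\]
Since $\Gamma_2 \subset C^\infty(G)$ carries the left-regular representation and $\rho_2$ is its dual on $\Gamma_2^*$, the transpose computation $\langle \rho_2(g^{-1})\beta,\ f_2\rangle = \langle \beta,\ g \cdot f_2\rangle = \langle \beta,\ g_*f_2\rangle$ rewrites the right-hand side as $\int_G \Phi(K+gL,\ f_1 \cdot g_* f_2)\, dg$, matching the previous route.

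The main technical point is the careful bookkeeping of representations and transposes: one must verify that the twist $\mathrm{id} \otimes \rho_2(g^{-1})$ which appears in the tensor-valuation kinematic formula exactly corresponds, under pairing with $f_1 \otimes f_2$, to the translation $g_* f_2$ of the second test function in the flag-area-measure kinematic formula. Once this identification is pinned down, the two kinematic formulas agree termwise and the diagram commutes.
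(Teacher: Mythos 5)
Your proposal is correct and follows essentially the same argument as the paper: both unwind the definitions of the moment map and of $q^*$, apply the tensor-valuation kinematic formula (with $\Gamma_1,\Gamma_2$ replaced by their duals), and match the resulting twist $\mathrm{id}\otimes\rho_2^*(g^{-1})$ against the translated test function $(g^{-1})^*f_2=g_*f_2$ appearing in the flag-area kinematic formula. The only cosmetic difference is that you evaluate both routes and meet at the common expression $\int_G \Phi(K+gL,f_1\cdot g_*f_2)\,dg$, whereas the paper computes a single chain of equalities from the bottom-left composite to the top-right one.
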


\proof
Let $f_i \in \Gamma_i, i=1,2$.
We compute
\begin{align*}
\bigg\langle & \left[a^{\Gamma_1^*,\Gamma_2^*} \circ (\mathrm{id} \otimes q^*) \circ M^{\Gamma_1 \cdot \Gamma_2} \Phi\right](K,L),f_1 \otimes f_2\bigg\rangle \\
& = \int_G \left\langle (\mathrm{id} \otimes \rho_2^*(g^{-1})) \left(q^*  M^{\Gamma_1 \cdot \Gamma_2}\Phi (K+gL)\right), f_1 \otimes f_2\right\rangle dg\\
& = \int_G \langle M^{\Gamma_1 \cdot \Gamma_2}\Phi(K+gL), f_1 \cdot (g^{-1})^*f_2\rangle dg\\
& = \int_G \Phi(K+gL, f_1 \cdot (g^{-1})^*f_2) dg\\
& = A(\Phi)(K,L,f_1,f_2)\\
& = \langle (M^{\Gamma_1} \otimes M^{\Gamma_2}) \circ A(\Phi)(K,L),f_1 \otimes f_2\rangle.
\end{align*}
\endproof

\proof[Proof of Theorem \ref{thm_ftaig}]
We look first at rotation measures. 

Let $\Gamma_1,\Gamma_2 \subset C^\infty(G)$ be two finite-dimensional $G$-submodules. Dualizing the above diagram yields the commutative diagram
\begin{displaymath}
 \xymatrix{
 \Area^{G*}_{G/\{1\}}  &  \Area^{G*}_{G/\{1\}} \otimes \Area^{G*}_{G/\{1\}} \ar[l]_-{A^*}\\
 (\Val \otimes \Gamma_1 \otimes \Gamma_2)^G  \ar[u]^{(\widehat{\pd} \circ (\mathrm{id} \otimes q^*) \circ M^{\Gamma_1 \cdot \Gamma_2})^*} & (\Val \otimes \Gamma_1)^G \otimes (\Val \otimes \Gamma_2)^G \ar[l]_-{c_G}  \ar[u]_{(\widehat{\pd} \circ M^{\Gamma_1})^* \otimes (\widehat{\pd} \circ M^{\Gamma_2})^*}
 }
\end{displaymath}

Let $\mu_a \in (\Val_{n-k_a} \otimes \Gamma_a)^G, a=1,2$ be represented by $\eta^a=\sum_i \eta^a_i \otimes f_i^a$ of bidegree $(n-k_a,k_a-1)$. Then $\mu_1 * \mu_2$ is represented by some $\eta \in \Omega^{n-1}(SV) \otimes \Gamma_1 \otimes \Gamma_2$ such that $D\eta=\sum_{i,j} *_1^{-1}(*_1 D\eta^1_i \wedge *_1 D\eta^2_j) \otimes f_{i}^1 \otimes f_j^2$, which has bidegree $(n-k_1-k_2,k_1+k_2-1)$. It follows that 
\begin{displaymath}
(-1)^{k_1+k_2} \Pi^*(D\eta)=*_1^{-1}(*_1(-1)^{k_1}\Pi^*D\eta^1 \wedge *_1(-1)^{k_2}\Pi^*D\eta^2).
\end{displaymath}
This means that if $L_1 \in \Area^{G*}_{G/\{1\}}$ is in the image of $(\widehat{\pd} \circ M^{\Gamma_1})^*$ and $L_2$ is in the image of $(\widehat{\pd} \circ M^{\Gamma_2})^*$, then the formula to compute $L_1 * L_2$ is correct. Thus, by Lemma \ref{lemma_kernel_moment_maps} the formula holds for all $L_1,L_2 \in \Area^{G*}_{G/\{1\}}$.

In the general case, we have a commutative diagram
\begin{displaymath}
\xymatrix{\Area^{G*}_{G/\{1\}}  & \Area^{G*}_{G/\{1\}}  \otimes \Area^{G*}_{G/\{1\}} \ar[l]_-{*}\\
\Area^{G*}_{G/H} \ar[u]_{\glob^*} & \Area^{G*}_{G/H}  \otimes \Area^{G*}_{G/H} \ar[l]_-{*} \ar[u]_{\glob^* \otimes \glob^*}
}
\end{displaymath}
Since $\glob^*$ is injective, the statement follows from Lemma \ref{lemma_glob_dual} and the fact that $\hat \Pi^*$ commutes with $*_1$.
\endproof

\section{Rotation measures} \label{sec_rotation_measures}

In this section we consider rotation measures, i.e., the case $G:=\SO(n),H:=\{1\}$ and prove Theorems \ref{mainthm_rotation_measures} and \ref{mainthm_algebra}. 

\subsection{Classification of rotation measures}

\proof[Proof of Theorem \ref{mainthm_rotation_measures}]
In the first part of the proof, we follow \cite[Section 3]{abardia_bernig_dann}.

The Lie algebras of $G,\overline{G}$ will be denoted by $\bar{\mathfrak g}, \mathfrak g$. The dimension of the fiber of $\Pi:V \times G \to SV$ is given by $r:=\binom{n-1}{2}$. As above we set $m:=n-1+r$. By definition, a smooth flag area measure $\Phi \in \Area_{\SO(n)/\{1\}}^{\SO(n)}$ of degree $k$ is represented by a translation invariant differential form $\eta \in \Omega^{k,m-k}(V \times G)$. Since $\Phi$ is $G$-invariant, we may assume by averaging over $G$ that $\eta$ is $G$-invariant, i.e., $\eta \in \Omega^m(\bar G)^{\bar G}$. 

Let $\sigma_i,i=1,\ldots,n, \omega_{ij}, 1 \leq i,j \leq n$ denote the components of the Maurer-Cartan form of $\bar G$. Then $\sigma_i,1 \leq i \leq n, \omega_{i,j}, 1\leq i < j \leq n$ span $\bar{\mathfrak g}^*$. We let $X_i, 1 \leq i \leq n, X_{ij}, 1 \leq i<j \leq n$ denote the dual basis of $\mathfrak g$. We let $V_0$ be the span of $X_1$; $V_\sigma$ the span of $X_i, 2 \leq i \leq n$; $V_\omega$ be the span of the $X_{1j}, 2 \leq j \leq n$; and $U$ the span of the $X_{ij}, 2 \leq i<j\leq n$. Schematically, the Lie algebra looks as follows:
\begin{displaymath}
\bar{\mathfrak g}=\left(\begin{matrix} 0 & & \\
V_0 & 0 & \\
V_\sigma & V_\omega & U\end{matrix}\right).
\end{displaymath}

Since $\Pi^*\alpha=\sigma_1$, the quotient of the space of $\bar G$-invariant forms of bidegree $(k,m-k)$ by multiplies of $\Pi^*\alpha$ is the space  
\begin{displaymath}
\Lambda^{k,m-k}(V_\sigma^* \oplus V_\omega^* \oplus U^*) \cong \Lambda^kV_\sigma^* \otimes \Lambda^{m-k}(V_\omega^* \oplus U^*) \cong \Lambda^kV_\sigma^* \otimes \bigoplus_{i=0}^{m-k} \Lambda^{m-k-i}V_\omega^* \otimes \Lambda^i U^*.
\end{displaymath}

If $\eta$ belongs to the sum of terms with $i<\dim U^*=r$, then $\eta \in \mathfrak F^{m,r}$ and hence induces the trivial flag area measure. We thus obtain that 
\begin{displaymath}
\Omega^{k,m-k}(\bar G)^{\bar G}/\langle \Pi^*\alpha,\mathfrak F^{m,r}\rangle \cong \Lambda^k V_\sigma^* \otimes \Lambda^{n-1-k} V_\omega^*.
\end{displaymath}

Next, by Proposition \ref{prop_kernel}, we have to factor out multiples of $\Pi^*d\alpha$, so that 
\begin{align*}
\Area_{\SO(n)/\{1\},k}^{\SO(n)} & \cong \Omega^{k,m-k}(\bar G)^{\bar G}/\langle \Pi^*\alpha,\Pi^*d\alpha,\mathfrak F^{m,r}\rangle \\
& \cong (\Lambda^k V_\sigma^* \otimes \Lambda^{n-1-k} V_\omega^*)/(\Lambda^{k-1} V_\sigma^* \otimes \Lambda^{n-2-k} V_\omega^*).
\end{align*}

In particular, 
\begin{displaymath}
\dim \Area_{\SO(n)/\{1\},k}^{\SO(n)}=\binom{n-1}{k}^2-\binom{n-1}{k-1} \binom{n-1}{k+1}=\frac{1}{n}\binom{n}{k}\binom{n}{k+1}.
\end{displaymath}

We now construct the rotation measures $S_{I,J}$. Uniqueness follows from the fact that smooth convex bodies are dense in the space of all compact convex bodies. Let us prove existence. Let $I=(i_1,\ldots,i_k),J=(j_1,\ldots,j_k)$ and set $I^c=\{2,\ldots,n\} \setminus I$, ordered in such a way that $\sgn(2,\ldots,n)=\sgn(i_1,\ldots,i_k,i_1^c,\ldots,i_{n-k-1}^c)$ and similar for $J^c$.

Define $S_{I,J}$ by the differential form 
\begin{displaymath}
\omega_{I,J}:=\sigma_{i_1} \wedge \ldots \wedge \sigma_{i_k} \wedge \omega_{j_1^c,1} \wedge \ldots \wedge \omega_{j^c_{n-k-1},1} \wedge \rho \in \Omega^{k,m-k}(V \times \SO(n)),
\end{displaymath}
where $\rho$ is the volume form of the fiber of the map $\SO(n) \to S^{n-1},g \mapsto ge_1$. 

Let $K$ be a smooth compact convex body with outer unit normal $\nu:\partial K \to S^{n-1}$. Fix $x \in \partial K$ and $g \in \SO(n)$ with $ge_1=\nu(x)$. Then $ge_2,\ldots,ge_n$ form a positive orthonormal basis of $T_x\partial K$. The vectors $w_l:=(ge_l,S_x(ge_l)) \in T_x\partial K \times T_x\partial K,l=2,\ldots,n$ span $T_{(x,\nu(x))}\nc(K)$. 

We have for $2 \leq i,j \leq n$ 
\begin{displaymath}
\sigma_i(w_l)=\delta_{il}, \quad \omega_{j,1}(w_l)=\langle ge_j,S(ge_l)\rangle.
\end{displaymath}

With $\Sigma_k$ denoting the permutation group of $k$ elements, we thus have
\begin{align*}
\omega_{I,J}(w_2,\ldots,w_n) & = \omega_{I,J}(w_{i_1},\ldots,w_{i_k},w_{i^c_1},\ldots,w_{i^c_{n-k-1}})\\
& = \sum_{\pi \in \Sigma_{n-k-1}} \!\!\!\! \sgn(\pi) \langle ge_{j^c_1},S(ge_{i^c_{\pi_1}})\rangle \cdot \ldots \cdot \langle ge_{j^c_{n-k-1}},S(ge_{i^c_{\pi_{n-k-1}}})\rangle \\ 
& = \det(\pi_{J^\perp} \circ S_x|_{V_I^\perp}:V_I^\perp \to V_J^\perp),
\end{align*}
from which \eqref{eq_rotation_on_smooth} follows.

By the definition of rotation measures (see Definition \ref{def_smooth_dual_area}) and by Proposition \ref{prop_kernel}, every rotation measure is a linear combination of the $S_{I,J}$. The relations \eqref{eq_relation_rotation_measures} and the last part of the statement are an immediate consequence of Proposition \ref{prop_kernel}, since multiples of $\Pi^*d\alpha$ induce the trivial rotation measure. 
\endproof
\endproof

\subsection{Dual rotation measures}

Let us now study more carefully the space of dual rotation measures. By Definition \ref{def_smooth_dual_area}, an element of  $\Area_{\SO(n)/\{1\},k}^{\SO(n)*}$ is represented by a form $\tau \in \Omega^{n-k,k}(\bar G)^{\bar G}$ such that $\tau \in \mathfrak{F}^{n,1},\Pi^*\alpha \wedge \tau=0,\Pi^*d\alpha \wedge \tau=0$.

As above, we have
\begin{align*}
\Omega^{n-k,k}(\bar G)^{\bar G} & \cong \Lambda^{n-k,k}(V_0^* \oplus V_\sigma^* \oplus V_\omega^* \oplus U^*) \\
& \cong \bigoplus_{\epsilon=0,1} \Lambda^\epsilon V_0^* \otimes \Lambda^{n-k-\epsilon}V_\sigma^* \otimes \bigoplus_{i=0}^k \Lambda^{k-i} V_\omega^* \otimes \Lambda^iU^*.
\end{align*}

The condition $\Pi^*\alpha \wedge \tau=0$ is satisfied for the part with $\epsilon=1$ in this sum, and the condition $\tau \in \mathfrak{F}^{n,1}$ is equivalent to $i=0$. We are thus left with the space 
\begin{displaymath}
\Lambda^{n-k-1}V_\sigma^* \otimes \Lambda^{k} V_\omega^*.
\end{displaymath}
Multiplication by the symplectic form $d\alpha$ gives a surjection  
\begin{displaymath}
L:\Lambda^{n-k-1} V_\sigma^* \otimes \Lambda^{k} V_\omega^* \to \Lambda^{n-k} V_\sigma^* \otimes \Lambda^{k+1} V_\omega^*,
\end{displaymath}
and $\Area_{\SO(n)/\{1\},k}^{\SO(n)*}$ is isomorphic to the kernel of this map. 

Let us rewrite this in more invariant terms. Let $V_\sigma^\C:=V_\sigma \otimes \C$ stand for the complexification. The group $\SL(n-1,\C)$ acts on the set of bases of $V_\sigma^\C,V_\omega^\C$ from the right in the natural way. If $Y=(Y_2,\ldots,Y_n)$ is a basis of $V_\sigma^\C$ and $g \in \SL(n-1,\C)$, then $(Yg)_i:=\sum_{j=2}^n Y_jg_{ji}$. The corresponding right operation on $V_\sigma^{*,\C},V_\omega^{*,\C}$ is given by 
\begin{align}
g^*\sigma_i & = \sum_{j=2}^n (g^{-1})_{ij}\sigma_j, \label{eq_action_sigma}\\
g^* \omega_{1i} & = \sum_{j=2}^n \omega_{1j}a_{ji}. \label{eq_action_omega}
\end{align}

As $\SL(n-1,\C)$-representations, we have $V_\sigma^\C \cong (V_\omega^{\C})^*$. The symplectic form is the canonic element of $\Lambda^2 (V^\C_\omega \oplus (V_\omega^{\C})^*)$. The above map $L$ can then be rewritten as an $\SL(n-1,\C)$-equivariant surjection 
\begin{displaymath}
L:\Lambda^{n-k-1} V^\C_\omega \otimes \Lambda^{k} (V_\omega^\C)^* \to  \Lambda^{n-k} V^\C_\omega \otimes \Lambda^{k+1} (V_\omega^{\C})^*.
\end{displaymath}
  
We may $\SL(n-1,\C)$-equivariantly identify
\begin{align*}
\Lambda^{n-k-1} V^\C_\omega \otimes \Lambda^{k} (V_\omega^{\C})^* & \cong \Lambda^k (V_\omega^{\C})^* \otimes \Lambda^k (V_\omega^{\C})^*,\\
\Lambda^{n-k} V_\omega^\C \otimes \Lambda^{k+1} (V_\omega^{\C})^* & \cong \Lambda^{k-1} (V_\omega^{\C})^* \otimes \Lambda^{k+1} (V_\omega^{\C})^*,
\end{align*}
and with this identification, the map $L:\Lambda^k (V^\C_\omega)^* \otimes \Lambda^k (V^\C_\omega)^* \to\Lambda^{k-1} (V^\C_\omega)^* \otimes \Lambda^{k+1} (V^\C_\omega)^*$ is given by
\begin{displaymath}
\tau_1 \wedge \cdots \wedge \tau_k \otimes \rho_1 \wedge \ldots \wedge \rho_k \mapsto \sum_{i=1}^k (-1)^i \tau_1 \wedge \ldots \wedge \widehat{\tau_i} \wedge \ldots \wedge \tau_k \otimes \rho_1 \wedge \ldots \wedge \rho_k \wedge \tau_i. 
\end{displaymath}

Let $\mathcal{L}_k$ be the kernel of this map. It is well-known that $\mathcal{L}_k$ is an irreducible representation of $\SL(n-1,\C)$ \cite[Exercise 15.30]{fulton_harris91}. Moreover, $\mathcal{L}_k \cdot \mathcal{L}_l \subset \mathcal{L}_{k+l}$. 

Let $\mathrm{SL}(n-1,\C)$ act on $X=(x_{ij})_{2 \leq i,j \leq n}$ (from the right) by $(g,X) \mapsto g^tXg$. Consider the map 
\begin{displaymath}
 \Psi:\C[X] \to \bigoplus_{k=0}^{n-1} \Lambda^k (V^\C_\omega)^* \otimes \Lambda^k (V^\C_\omega)^*, x_{ij} \mapsto \frac12 (\omega_{1i} \otimes  \omega_{1j}+\omega_{1j} \otimes \omega_{1i}). 
\end{displaymath}

Then $\Psi$ is an equivariant algebra morphism by \eqref{eq_action_sigma}, \eqref{eq_action_omega} and Theorem \ref{thm_ftaig}. Obviously $\Psi(x_{ij}) \in \mathcal{L}_1$. Hence, $\Psi(\C[X]_k) \subset \mathcal{L}_k$, where $\C[X]_k$ is the part of degree $k$. Since $\mathcal{L}_k$ is irreducible and the image is not trivial (consider $\Psi(x_{23} \ldots x_{2k,2k+1})$ if $k<n/2$ and $\Psi(x_{23}^2\ldots )$ otherwise), we get a surjective map $\Psi:\C[X] \to \Area_{\SO(n)/\{1\}}^{\SO(n)*} \otimes \C$. 

It is easy to check that $I \subset \ker \Psi$, hence we have a surjective map (denoted by the same letter) 
\begin{displaymath}
\Psi:\C[X]/I \to \Area_{\SO(n)/\{1\}}^{\SO(n)*} \otimes \C.
\end{displaymath}

To finish the proof, we will compare the dimensions of both sides. We first need some relations in $\C[X]/I$. 

\begin{lemma} \label{lemma_det_as_xii}
Modulo $I$, we have the following equations.
\begin{enumerate}
\item[(i)] A monomial vanishes if some index is repeated three times, i.e., $x_{aa} x_{ab}=0$, $x_{ab_1}x_{ab_2}x_{ab_3}=0$ for all $a,b,b_1,b_2,b_3 \in \{2,\ldots,n\}$.
\item[(ii)] $\det (x_{ij})_{2 \leq i,j \leq n} = \frac{n!}{2^{n-1}} x_{22}\cdot \ldots \cdot x_{n,n}$.
\item[(iii)] The elementary symmetric polynomials $E_i$ satisfy
\begin{displaymath} 
E_{i}(x_{2,2},\dots,x_{n,n})E_{j}(x_{2,2},\dots,x_{n,n}) =\binom{i+j}{i}E_{i+j}(x_{2,2},\dots,x_{n,n}).
\end{displaymath} 
\end{enumerate}
\end{lemma}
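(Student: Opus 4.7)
The plan is to handle the three parts in order, using only the Plücker-type relations $x_{ij}x_{kl}+x_{ik}x_{jl}+x_{il}x_{jk}=0$ and the symmetry $x_{ij}=x_{ji}$ defining $I$.

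For (i), I would plug the specializations $(i,j,k,l)=(a,a,a,b)$ and $(i,j,k,l)=(a,b_1,a,b_2)$ into the Plücker relation. The first gives $3x_{aa}x_{ab}=0$, hence $x_{aa}x_{ab}=0$ (in particular $x_{aa}^2=0$). The second, after applying $x_{b_1,a}=x_{a,b_1}$, gives
\begin{equation*}
2 x_{a,b_1}x_{a,b_2}+x_{aa}x_{b_1,b_2}=0,
\end{equation*}
so $x_{a,b_1}x_{a,b_2}=-\tfrac12 x_{aa}x_{b_1,b_2}$. Multiplying by $x_{a,b_3}$ and using commutativity together with $x_{aa}x_{a,b_3}=0$ gives $x_{a,b_1}x_{a,b_2}x_{a,b_3}=0$.

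For (iii), I would expand $E_i E_j=\sum_{|A|=i,|B|=j}\prod_{a\in A}x_{aa}\prod_{b\in B}x_{bb}$. By (i) (applied to $x_{aa}^2=0$), all terms with $A\cap B\neq\emptyset$ vanish. The remaining terms are indexed by ordered pairs $(A,B)$ of disjoint subsets; grouping by $C=A\cup B$ gives exactly $\binom{i+j}{i}$ pairs per $C$, yielding $E_iE_j=\binom{i+j}{i}E_{i+j}$.

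For (ii), I would write the determinant as a signed sum over cycle structures:
\begin{equation*}
\det(x_{ij})_{2\le i,j\le n}=\sum_{\sigma\in S_{n-1}}\sgn(\sigma)\prod_{i=2}^n x_{i,\sigma(i)},
\end{equation*}
and reduce each cycle contribution $x_{i_1 i_2}x_{i_2 i_3}\cdots x_{i_k i_1}$ separately. The key reduction is the Plücker relation with $(i,j,k,l)=(i_1,i_2,i_2,i_3)$, which gives $x_{i_1 i_2}x_{i_2 i_3}=-\tfrac12 x_{i_2 i_2}x_{i_1 i_3}$; iterating this $k-2$ times and finally applying $x_{i_1 i_k}^2=-\tfrac12 x_{i_1 i_1}x_{i_k i_k}$ yields the clean formula
\begin{equation*}
x_{i_1 i_2}x_{i_2 i_3}\cdots x_{i_k i_1}=\bigl(-\tfrac12\bigr)^{k-1}x_{i_1 i_1}\cdots x_{i_k i_k}.
\end{equation*}
Combined with $\sgn(\sigma)=(-1)^{n-1-\ell(\sigma)}$, where $\ell(\sigma)$ is the number of cycles, this gives
\begin{equation*}
\det(x_{ij})_{2\le i,j\le n}=\frac{x_{22}\cdots x_{nn}}{2^{n-1}}\sum_{\sigma\in S_{n-1}}2^{\ell(\sigma)}.
\end{equation*}
The final step is the classical identity $\sum_{\sigma\in S_m}t^{\ell(\sigma)}=t(t+1)\cdots(t+m-1)$, which at $t=2$, $m=n-1$ evaluates to $2\cdot 3\cdots n=n!$, producing the prefactor $n!/2^{n-1}$. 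The main obstacle is purely bookkeeping: verifying that the iterated Plücker reduction is legitimate (all intermediate indices stay distinct, so no relation is applied to a product already forced to vanish by (i)) and invoking the right combinatorial identity for the sign-weighted sum over cycle types.
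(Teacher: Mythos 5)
Your proof is correct. Parts (i) and (iii) follow the same route as the paper (the paper dismisses (i) as an easy exercise; you spell out the Plücker specializations $(a,a,a,b)$ and $(a,b_1,a,b_2)$, which is exactly what is meant; (iii) is the same direct expansion using $x_{aa}^2=0$).

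For (ii) you take a genuinely different path. The paper argues by induction on $n$: expanding the determinant along the last column and then each cofactor along its last row, it replaces each product $x_{l,n}x_{nj}$ by $-\frac12 x_{nn}x_{lj}$, regroups to recover the $(n-1)$-size determinant, and obtains the recursion $\det_n = \frac{n}{2}\, x_{nn}\det_{n-1}$. Your argument is global: decompose each permutation into cycles, iterate the reduction $x_{i_1 i_2}x_{i_2 i_3}=-\frac12 x_{i_2 i_2}x_{i_1 i_3}$ to show a $k$-cycle contributes $\left(-\frac12\right)^{k-1}x_{i_1 i_1}\cdots x_{i_k i_k}$, observe that the resulting sign exactly cancels $\sgn(\sigma)=(-1)^{n-1-\ell(\sigma)}$, and finish with the Stirling-number identity $\sum_{\sigma\in S_{n-1}}2^{\ell(\sigma)}=n!$. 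Both proofs rest on the same elementary reduction $x_{ab}x_{bc}\equiv -\frac12 x_{bb}x_{ac}$, but the paper hides the combinatorics inside the recursion $\det_n=\frac n2 x_{nn}\det_{n-1}$, whereas your version makes the origin of the factor $n!$ transparent as a cycle-count generating function. The one thing worth noting: your parenthetical worry about the reduction being ``legitimate'' is unnecessary -- the Plücker relations are identities modulo $I$, so they may be applied freely regardless of whether intermediate products happen to vanish; what matters is only that the cycle indices are pairwise distinct, which they are by definition.
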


\proof
\begin{enumerate}
\item[(i)] Easy exercise.
\item[(ii)] We prove this by induction over $n$, the case $n=2$ being trivial. 

Suppose that $n>2$ and develop the determinant with respect to the last column:
\begin{displaymath}
\det (x_{ij})_{2 \leq i,j \leq n}=\sum_{l=2}^{n-1} (-1)^{l+n} x_{l,n} \det  (x_{ij})_{\substack{2 \leq i \leq n, i \neq l\\2 \leq j \leq n-1}} +x_{n,n} \det(x_{ij})_{2 \leq i,j \leq n-1}.
\end{displaymath}
Now develop the determinant in the first summand with respect to the last row. We obtain some sum of terms containing the factor $x_{l,n} x_{nj}$, which equals $-\frac12 x_{n,n}x_{lj}$. We may thus replace the factor $x_{l,n}$ by $-\frac12 x_{n,n}$, and the last row of $(x_{ij})_{\substack{2 \leq i \leq n, i \neq l\\2 \leq j \leq n-1}}$ by $x_{l2},\ldots,x_{l,n-1}$, which is just the row which was deleted. Rearranging the rows (which gives us another sign $(-1)^{n-l+1}$) we find that 
\begin{align*}
 \det (x_{ij})_{2 \leq i,j \leq n} & =\sum_{l=2}^{n-1} (-1)^{l+n} (-\frac12) (-1)^{n-l+1} x_{n,n}\det(x_{ij})_{2 \leq i,j \leq n-1} \\
 & \quad +x_{n,n} \det(x_{ij})_{2 \leq i,j \leq n-1}\\
 & = \frac{n}{2} x_{n,n} \det(x_{ij})_{2 \leq i,j \leq n-1}.
\end{align*}
\item[(iii)] By definition, 
\begin{displaymath}
E_i(x_{2,2},\dots,x_{n,n})=\sum_{|I|=i} x_{a_1,a_1}\cdot\ldots\cdot x_{a_i,a_i},
\end{displaymath}
where $I=(a_1,\dots,a_i)$ runs over all ordered subsets of size $i$ in $\{2,\dots,n\}$.

Since $x_{aa}^2=0$ for each $a$, we find that 
\begin{align*}
E_i(x_{2,2},\dots,x_{n,n}) E_j(x_{2,2},\dots,x_{n,n}) &=\sum_{|I|=i,|J|=j} \!\!\!x_{a_1,a_1}\cdot\ldots\cdot x_{a_i,a_i} x_{b_1,b_1}\cdot\ldots\cdot x_{b_j,b_j}\\
&=\binom{i+j}{i} \sum_{|K|=i+j}x_{c_1,c_1} \cdot \ldots \cdot x_{c_{i+j},c_{i+j}}\\
&=\binom{i+j}{i} E_{i+j}(x_{2,2},\dots,x_{n,n}).
\end{align*}
\end{enumerate}
\endproof

\begin{lemma} \label{lemma_upper_bound_dimension_algebra}
The dimension of the degree $k$-part of the algebra 
 \begin{displaymath}
\C[X]/I
\end{displaymath}
is at most $\frac{1}{n}\binom{n}{k}\binom{n}{k+1}$.
\end{lemma}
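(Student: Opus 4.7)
The plan is to produce a spanning set of $(\C[X]/I)_k$ of cardinality at most $\frac{1}{n}\binom{n}{k}\binom{n}{k+1}$. I would encode each monomial in the $x_{ij}$ as a multigraph on the vertex set $\{2,\ldots,n\}$, where $x_{ii}$ is a loop at $i$ and $x_{ij}$ (with $i\ne j$) an edge between $i$ and $j$. By Lemma~\ref{lemma_det_as_xii}(i), any monomial whose multigraph contains a vertex of degree $\ge 3$ (loops counting twice) vanishes modulo $I$. Hence $(\C[X]/I)_k$ is spanned by monomials whose multigraph is a disjoint union of loops, isolated edges, simple paths, and simple cycles.

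The Plücker relation with coinciding indices gives
\begin{align*}
2\,x_{ab}x_{bc} &= -\,x_{bb}x_{ac} \quad (a,b,c\ \text{distinct}),\\
2\,x_{ac}^2 &= -\,x_{aa}x_{cc} \quad (a\ne c).
\end{align*}
The first rewrites a two-edge path as (loop at the middle vertex) times (edge between the endpoints); the second rewrites a double edge as two loops. Iterating these moves, every path reduces to loops at all its interior vertices together with an edge between its endpoints, and every cycle to loops at all its vertices. Hence $(\C[X]/I)_k$ is already spanned by the \emph{reduced monomials}
\begin{displaymath}
m_{A,M} := \Bigl(\prod_{a\in A} x_{aa}\Bigr)\Bigl(\prod_{\{j,l\}\in M} x_{jl}\Bigr),
\end{displaymath}
with $A\subset\{2,\ldots,n\}$, $M$ a matching on $\{2,\ldots,n\}\setminus A$, and $|A|+|M|=k$.

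The residual relations come from the Plücker identity on four distinct indices $i,j,k,l\in\{2,\ldots,n\}\setminus A$ multiplied by a reduced monomial supported outside $\{i,j,k,l\}$; such a relation expresses a sum of three reduced monomials, differing only by the matching of $\{i,j,k,l\}$, as zero. A case-check shows that any Plücker relation whose four indices meet $A$ becomes trivial after expansion, using $x_{aa}x_{ab}=0$ repeatedly. The standard noncrossing-matching reduction then implies that, modulo the relations of the first type, matchings of size $m$ on an ordered $2m$-set reduce to noncrossing matchings, of which there are exactly $C_m=\frac{1}{m+1}\binom{2m}{m}$. Summing over the number of loops $l=k-m$,
\begin{displaymath}
\dim(\C[X]/I)_k \ \le\ \sum_{m=0}^{\lfloor k/2\rfloor} \binom{n-1}{k-m}\binom{n-1-k+m}{2m}\,C_m.
\end{displaymath}

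A standard combinatorial identity rewrites the right-hand side as $\binom{n-1}{k}^2-\binom{n-1}{k-1}\binom{n-1}{k+1}$ (provable, for instance, via the Lindström--Gessel--Viennot interpretation as the number of pairs of non-intersecting lattice paths), which equals $\frac{1}{n}\binom{n}{k}\binom{n}{k+1}$ by elementary algebra. The main obstacle is the third paragraph: one must verify case-by-case that Plücker relations straddling $A$ indeed collapse via $x_{aa}x_{ab}=0$, so that no further collapses occur beyond those already encoded in the noncrossing-matching count.
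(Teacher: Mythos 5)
Your proof follows essentially the same route as the paper's: reduce (via the relations of Lemma~\ref{lemma_det_as_xii}(i)) to monomials whose multigraph has maximum degree two, then to products of loops with a matching, then use the Pl\"ucker relation to reduce the matching part to non-crossing matchings, and finally count. Two points, neither fatal:

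First, the upper limit of summation should be $m=k$ (or $\min\{k,n-1-k\}$, after which the binomial vanishes), not $\lfloor k/2\rfloor$: since $|A|+|M|=k$ with $|M|=m$ edges and $|A|=k-m$ loops, the number of edges $m$ can be as large as $k$. With the bound $\lfloor k/2\rfloor$ the sum is strictly smaller than $\tfrac1n\binom{n}{k}\binom{n}{k+1}$ for, e.g., $n=10,k=3$, so the asserted identity would fail. This is evidently a slip, since your description of $m_{A,M}$ makes the correct range clear.

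Second, the step you flag as the ``main obstacle'' --- verifying that Pl\"ucker relations whose four indices meet $A$ collapse to $0=0$, so that ``no further collapses occur'' --- is not needed. For an upper bound on dimension one only needs a \emph{spanning} set, and the reduced monomials $m_{A,M}$ with $M$ non-crossing already span after the path/cycle reduction and the crossing-removal you describe. Extra relations straddling $A$ could only make the span smaller, which is harmless. (Showing no further collapses would be relevant to a \emph{lower} bound, i.e.\ to linear independence, but that is supplied later in the paper by exhibiting a surjection onto $\Area^{\SO(n),*}_{\SO(n)/\{1\}}$.) Finally, the paper closes the count with a direct bijective (card-coloring) argument rather than Lindstr\"om--Gessel--Viennot; both establish the same identity.
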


\begin{proof}
Let us fix $0 \leq l \leq k$ and count the number of monomials with $2l$ indices appearing once and $(k-l)$ indices appearing twice. 

There are $\binom{n-1}{k-l}$ possibilities to choose the $(k-l)$ double indices among $1,\ldots,n-1$. Replacing $x_{ij_1}x_{i j_2}$ by $-\frac12 x_{ii}x_{j_1j_2}$, we may assume that such a double index $i$ appears in a factor $x_{ii}$. 

From the remaining $(n-k+l-1)$ other indices we choose $2l$, which gives us $\binom{n-k+l-1}{2l}$ possibilities. These $2l$ indices $i_1,\ldots,i_{2l}$ will be put into pairs so that we form the product $x_{i_1i_2}\cdots x_{i_{2l-1}i_{2l}}$. 

However, we can use the relations to rule out some combinations. Arrange the $2l$ numbers $i_1,\ldots,i_{2l}$ in a circle. If a monomial contains a factor $x_{i_1 i_2} x_{i_3 i_4}$ such that the lines between $[i_1,i_2]$ and $[i_3,i_4]$ intersect, we may use the relation and replace this factor by  $x_{i_1 i_2}x_{i_3 i_4}=-x_{i_1 i_3}x_{i_4 i_2}-x_{i_1 i_4}x_{i_2 i_3}$. Note that $[i_1,i_3]$ and $[i_4,i_2]$ do not intersect and similarly $[i_1,i_4]$ and $[i_2,i_3]$ do not intersect. Continuing this way we may assume that none of the lines $[i_1,i_2],\ldots,[i_{2l-1}i_{2l}]$ do intersect. Basic combinatorics tells us that the number of such non-intersecting pairings is the Catalan number $\frac{(2l)!}{(l+1)!l!}$. 

Summarizing, we get that the dimension of the degree $k$-part of the algebra is bounded from above by 
\begin{displaymath}
 \sum_{l=0}^k\binom{n-1}{k-l}\binom{n-k+l-1}{2l}\frac{(2l)!}{l!(l+1)!}.
\end{displaymath}

It remains to see that this equals the expression given in the lemma. This can be seen by the following combinatorial argument.

Take a set of $n$ numbered cards with both sides empty. Choose $k$ among these cards and color the front side green. Independently of that, color the back side of $(k+1)$ among the $n$ cards red. The number of different colorings obtained in this way is $\binom{n}{k}\binom{n}{k+1}$. 

Start again with a set of $n$ numbered cards with both sides empty and fix a number $0 \leq l \leq k$. Choose one of the cards and declare it to be $1$-colored (the color will be fixed later). Among the remaining $(n-1)$ cards, choose $(k-l)$ and color the front side green and the back side red. Among the remaining $(n-k+l-1)$ cards, choose $2l$ and declare them to be $1$-colored. Among the $(2l+1)$ $1$-colored cards, we color $l$ in green and $(l+1)$ in red. In this way, we obtain all colorings with precisely $(k-l)$ two-colored cards, such that $k$ are green and $(k+1)$ are red. However, each of these colorings is counted $(2l+1)$ times, since any of the $(2l+1)$ $1$-colored cards can be chosen as the first card to begin with. The total number of colorings is therefore $n \sum_{l=0}^k \binom{n-1}{k-l}\binom{n-k+l-1}{2l} \binom{2l+1}{l} \frac{1}{2l+1}$. \end{proof}

\proof[End of the proof of Theorem \ref{mainthm_algebra}]
Since $\Psi:\C[X]/I \to \Area_{\SO(n)/\{1\}}^{\SO(n)*} \otimes \C$ is surjective and the dimension on the left hand side is not larger than the dimension on the right hand side, the map must be a bijection and the upper bound for the dimension from Lemma \ref{lemma_upper_bound_dimension_algebra} is attained. Obviously, the algebra isomorphism $\Psi$ induces an algebra isomorphism (denoted by the same letter)  
\begin{displaymath}
\Psi:\R[X]/I \to \Area_{\SO(n)/\{1\}}^{\SO(n)*}.
\end{displaymath}
Let us compute the image of $x_{i,j}$. Note that $x_{i,j}$ corresponds to the dual rotation measure given by the form $\tau : =*_1^{-1} \frac12(\sigma_i \wedge \omega_{j,1}+\sigma_j \wedge \omega_{i,1})$. We compute
\begin{displaymath}
 *_1^{-1}(\sigma_i \wedge \omega_{j,1})=*^{-1}\sigma_i \wedge \omega_{j,1}=(-1)^n \sigma_1 \wedge \sigma_{i_1^c} \wedge \ldots \wedge \sigma_{i_{n-2}^c} \wedge \omega_{j,1},
\end{displaymath}
and hence 
\begin{align*}
 *_1^{-1} ( \sigma_i \wedge \omega_{j,1} ) \wedge \omega_{\{i\},\{j\}} & = (-1)^n \sigma_1 \wedge \sigma_{i_1^c} \wedge \ldots \wedge \sigma_{i_{n-2}^c} \wedge \omega_{j,1} \wedge \sigma_{i} \wedge \omega_{j_1^c,1} \wedge \ldots \wedge \omega_{j_{n-2}^c,1} \wedge \rho\\
 & = - \sigma_1 \wedge \ldots \wedge \sigma_n \wedge \omega_{2,1} \wedge \ldots \wedge \omega_{n,1} \wedge \rho.
\end{align*}
It follows that 
\begin{align*}
 \langle \Psi(x_{i,i}),S_{i,i}\rangle & = - \vol \SO(n), \\
 \langle \Psi(x_{i,j}),S_{i,j}\rangle & = \langle \Psi(x_{i,j}),S_{j,i}\rangle=- \frac12 \vol \SO(n), i \neq j.
\end{align*}

In both cases, it follows that $\Psi(x_{ij})=- \vol \SO(n) \cdot \frac{S_{i,j}^*+S_{j,i}^*}{2}$.
\endproof

\begin{remark} In the following, we will identify an element of $\R[X]/I$ with its image under $\Psi$ in $\Area_{\SO(n)/\{1\}}^{\SO(n)*}$. 
\end{remark}

\begin{corollary}\label{subalgebras}
Let $H \subset \SO(n-1)$ be a closed subgroup. Then the image of the transposed globalization map $\glob^*: \Area_{\SO(n)/H}^{\SO(n)*} \to  \Area_{\SO(n)/\{1\}}^{\SO(n),*} \cong \R[X]/I$ is the subalgebra of $\R[X]/I$ consisting of all invariants under $H$ with respect to the right action $h^*X=h^tXh$. 
\end{corollary}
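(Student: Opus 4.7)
The plan is to combine three ingredients: the concrete realization of $\glob^*$ from Lemma~\ref{lemma_glob_dual}, an $H$-averaging argument on representing forms, and a direct computation of the induced $H$-action on $\R[X]/I$.

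First I would verify that $\glob^*$ is an algebra morphism whose image lies in the $H$-invariant subspace. By Lemma~\ref{lemma_glob_dual}, on representing forms $\glob^*$ is simply the pullback $\hat\Pi^*$; as observed at the end of the proof of Theorem~\ref{thm_ftaig}, this operator commutes with $*_1$ and with the wedge product, so $\glob^*$ intertwines the convolution products and its image is a subalgebra. Since the right translation action $R_H$ on $\SO(n)$ commutes with the left $\overline{\SO(n)}$-action, it descends to an action of $H$ by algebra automorphisms on $\Area^{\SO(n),*}_{\SO(n)/\{1\}}$; any form of the shape $\hat\Pi^*\tau$ is manifestly $R_H$-invariant, hence the image of $\glob^*$ is contained in the $H$-fixed subalgebra.

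For the reverse inclusion I would argue by $H$-averaging. From the proof of Theorem~\ref{mainthm_rotation_measures}, each element of $\Area^{\SO(n),*}_{\SO(n)/\{1\}}$ admits a representative in the subspace $\Lambda^\bullet(V_\sigma^*\oplus V_\omega^*)\subset\Lambda^\bullet\bar{\mathfrak g}^*$, which is stable under $R_H$ (since $H\subset\SO(n-1)$ preserves the splitting $\bar{\mathfrak g}^*=\langle\sigma_1\rangle\oplus V_\sigma^*\oplus V_\omega^*\oplus\mathfrak u^*$) and has trivial contraction against $\mathfrak h\subset\mathfrak u$. Given an $H$-invariant class $L$ with such a representative $\tau$, the averaged form $\bar\tau:=\int_H R_h^*\tau\,dh$ represents $L$ (because each $R_h^*\tau$ represents $h\!\cdot\! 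L=L$ by $H$-equivariance of the representation map), is $R_H$-invariant by construction, and remains in $\Lambda^\bullet(V_\sigma^*\oplus V_\omega^*)$. Hence $\bar\tau$ is $H$-basic for the submersion $V\times\SO(n)\to V\times\SO(n)/H$ and descends to some $\tau'\in\mathcal{J}^{n,tr}(V\times\SO(n)/H)^{\overline{\SO(n)}}$ with $\glob^*[\tau']=L$; descent of the defining conditions $\tau\in\mathfrak F^{n,1}$, $\Pi^*\alpha\wedge\tau=0$, $\Pi^*d\alpha\wedge\tau=0$ is automatic since they are themselves $R_H$-equivariant. This selection of an $H$-basic representative is the step I expect to require the most care.

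Finally I would identify the $H$-action on $\R[X]/I$ through $\Psi$. Right translation by $h\in H$ pulls back the Maurer--Cartan form $\theta$ to $h^{-1}\theta h$; combined with $h$ fixing $e_1$ (so $h_{11}=1$ and $h_{k1}=0$ for $k\geq 2$), this yields $R_h^*\omega_{1i}=\sum_{j=2}^n h_{ji}\,\omega_{1j}$. Since $\Psi(x_{i,j})$ is a nonzero scalar multiple of the dual class of the symmetrization of $\omega_{1i}\otimes\omega_{1j}$ (end of the proof of Theorem~\ref{mainthm_algebra}), one obtains
\[
R_h^*x_{i,j}=\sum_{k,l=2}^n h_{ki}h_{lj}\,x_{k,l}=(h^tXh)_{i,j},
\]
which is the action claimed in the statement. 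Combining the three steps identifies the image of $\glob^*$ with the invariant subalgebra $\bigl(\R[X]/I\bigr)^H$ for this action.
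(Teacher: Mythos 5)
Your proof is correct and follows the same conceptual route as the paper's one-sentence argument (``$\glob^*$ is an injective algebra morphism and is $H$-equivariant''), just with the details fully supplied: you establish image $\subseteq$ invariants from the pullback description $\glob^* = \hat\Pi^*$, you obtain the reverse inclusion by $H$-averaging a representative in the $R_H$-stable, $\mathfrak h$-horizontal model subspace so that it becomes basic and descends, and you verify by the Maurer--Cartan computation that the $R_H$-action on the generators $x_{i,j}$ is exactly $h^*X = h^tXh$. The only cosmetic point worth flagging is that the representatives produced in Section~\ref{sec_rotation_measures} live in $\sigma_1 \wedge \Lambda^\bullet(V_\sigma^*\oplus V_\omega^*)$ rather than in $\Lambda^\bullet(V_\sigma^*\oplus V_\omega^*)$ itself, but since $\sigma_1 = \Pi^*\alpha$ is $R_H$-invariant and $\mathfrak h$-horizontal this does not affect your averaging step.
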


\begin{proof}
The statement follows directly by recalling that the inclusion $\glob^*$ preserves the algebra structure of both spaces and is $H$-equivariant.
\end{proof}

\begin{example}\label{example_Area}
We consider the case of area measures, i.e., $H=\SO(n-1)$. By Corollary~\ref{subalgebras}, $\Area^{\SO(n),*}:=\Area_{\SO(n)/\SO(n-1)}^{\SO(n),*}$ is the subalgebra of $\R[X]/I$ of all elements which are invariant under the action $h^*X=h^tXh, h\in\SO(n-1)$. Since $h^t=h^{-1}$, these invariants contain the elementary symmetric functions of $X$. By Lemma \ref{lemma_det_as_xii}(ii), they can be written as rescalings of the elementary symmetric functions of the diagonal elements $x_{ii}$, $2\leq i\leq n$. 

The linear map $\Xi:\R[t] \to (\R[X]/I)^H, t^i \mapsto i!E_i(x_{2,2},\ldots,x_{n,n})$ induces by Lemma \ref{lemma_det_as_xii} an injective algebra morphism 
\begin{displaymath}
\tilde \Xi:\R[t]/\langle t^{n}\rangle \to  (\R[X]/I)^H = \Area^{\SO(n),*}. 
\end{displaymath}
Since the dimensions on both sides agree, this map is an algebra isomorphism. Hence, 
\begin{displaymath}
\Area^{\SO(n),*} \cong\R[t]/\langle t^{n}\rangle,
\end{displaymath}
which is of course well-known. 

The additive kinematic formulas are given by 
\begin{equation} \label{eq_kinematic_area}
 A(S_i)=\frac{1}{\omega_n} \sum_{k+l=i} \binom{i}{k} S_k \otimes S_l,
\end{equation}
see \cite{bernig_hug}, or Schneider \cite[Theorem 4.4.6]{schneider_book14}. It follows that 
\begin{displaymath}
 S_k^* \cdot S_l^*=\frac{1}{\omega_n} \binom{k+l}{k} S_{k+l}^*.
\end{displaymath}
Clearly $t$ is mapped to some multiple $c S_1^*$. We will see later that $c=\omega_n$. Then $t^k$ is mapped to $\omega_n k! S_k^*$, as can be shown by induction over $k$.
\end{example}


\section{Algebraic structure of $\FlagArea^{(p),\SO(n),*}$} \label{sec_algebraic_structure_flag_case}

The aim in this section is to prove Proposition \ref{prop_algebra_flag_case}. Recall first that $\sigma_i,\omega_{ij}$ are the coordinates of the Maurer-Cartan form of $\overline{\SO(n)}$. The volume form of the unit sphere is $\omega_{21} \wedge \ldots \wedge \omega_{n1}$. We have the structure equations 
\begin{displaymath}
d\sigma_i=- \sum_{j=1}^n \omega_{ij} \wedge \sigma_j, d\omega_{ij}=-\sum_{k=1}^n \omega_{ik} \wedge \omega_{kj}.
\end{displaymath}
Unwinding the definitions in Section \ref{sec_rotation_measures}, we have 
\begin{displaymath}
x_{ij}=\frac12 (\sigma_i \wedge \omega_{j1}+\sigma_j \wedge \omega_{i1});
\end{displaymath}
in particular $x_{ii}=\sigma_i \wedge \omega_{i1}$. 

\begin{proof}[Proof of Proposition~\ref{prop_algebra_flag_case}]
Recall that $\Flag_{1,p+1} \cong \SO(n)/H$ with $H=\mathrm{S}(\O(p)\times\O(q))$. By Corollary~\ref{subalgebras}, $\glob^*$ maps $\FlagArea^{(p),\SO(n),*}$ bijectively to the algebra of $H$-invariant elements in $\R[X]/I$, where the action is given by $h^*X:=h^tXh=h^{-1}Xh$. 

Assume first that $p \neq q$. We claim that the linear map  
\begin{displaymath}
\Xi:\R[x,y] \to (\R[X]/I)^H,
\end{displaymath}
which sends $x^iy^j$ to $i! E_i(x_{2,2},\ldots,x_{p+1,p+1}) j! E_j(x_{p+2,p+2},\ldots,x_{n,n})$ is an algebra morphism.

It is clear that the image of each monomial is $H$-invariant. The compatibility with the product follows from Lemma \ref{lemma_det_as_xii}.

Obviously, $x^{p+1},y^{q+1} \in \ker \Xi$, hence there is an induced algebra morphism
\begin{equation}\label{tilde_Xi}
\tilde \Xi: \R[x,y]/\langle x^{p+1},y^{q+1}\rangle \to (\R[X]/I)^H.
\end{equation}

We claim that this map is injective. To do so, introduce a bigrading on $(\R[X]/I)^H$ by declaring that 
\begin{displaymath}
\deg x_{ab}= \begin{cases} (1,0) & 2 \leq a,b \leq p+1\\ 
(\frac12,\frac12) & 2 \leq a \leq p+1, p+2 \leq b \leq n \\
 (\frac12,\frac12) & 2 \leq b \leq p+1, p+2 \leq a \leq n\\
(0,1) & p+2 \leq a,b \leq n.
\end{cases} 
\end{displaymath}
It is easily checked that the ideal $I$ is bigraded, so we indeed have a bigrading on the quotient. The image of $x^iy^j, 0 \leq i \leq p, 0 \leq j \leq q$ is of bidegree $(i,j)$. To prove injectivity of $\tilde \Xi$, it is therefore enough to prove that $\tilde \Xi(x^iy^j) \neq 0$ for $0 \leq i \leq p, 0 \leq j \leq q$. But $\tilde \Xi(x^iy^j)$ corresponds to the form 
\begin{equation} \label{eq_xiyj}
i!  E_i(\sigma_2 \wedge \omega_2,\ldots,\sigma_{p+1} \wedge \omega_{p+1}) j!  E_j(\sigma_{p+2} \wedge \omega_{p+2},\ldots,\sigma_n \wedge \omega_n),
\end{equation}
which is obviously non zero. 

To conclude the proof that $\tilde \Xi$ is an algebra isomorphism, it is enough to compare dimensions. The dimension of the $k$-homogeneous part of the left hand side is the number of monomials $x^iy^j$ with $i+j=k, 0 \leq i \leq p, 0 \leq j \leq q$, which is easily computed as $\min\{p,q,k,n-k-1\}+1$. The $k$-homogeneous part on the right hand side is isomorphic to 
$\FlagArea^{(p),\SO(n)}_{k}$, which is of the same dimension by \cite[Theorem 4]{abardia_bernig_dann}. 

Let us now consider the case $p=q$. The above proof goes through word by word, except that $\dim \FlagArea^{(p),\SO(n)}_{k}=\min\{p,q,k,n-k-1\}+2$ if $k=p=q$. We define 
\begin{displaymath}
\Xi:\R[x,y,u] \to (\R[X]/I)^H, 
\end{displaymath} 
similarly as above, with 
\begin{displaymath}
\Xi(u):=\det (x_{ij})_{\substack{2 \leq i \leq p+1\\p+2 \leq j \leq n}}.
\end{displaymath}

By Lemma \ref{lemma_det_as_xii}, we have $\Xi(xu)=\Xi(yu)=0$. More precisely, each monomial in the development of the determinant contains each index at least once, while each term in $x$ (resp. $y$) contains each index twice. 

We next compute $\Xi(u^2)$ by using $x_{ai}x_{aj}=-\frac12 x_{aa}x_{ij}$, which is a consequence of Lemma \ref{lemma_det_as_xii}. 
\begin{align*}
\Xi(u^2)&=\left(\sum_{\sigma \in \Sigma_{p}} \sgn(\sigma) x_{2,\sigma(p+2)}\ldots x_{p+1,\sigma(n)}\right)\!\!\! \left(\sum_{\pi \in\Sigma_{p}} \sgn(\pi) x_{2,\pi(p+2)}\ldots x_{p+1,\pi(n)}\right)\\
&=\left(\frac{-1}{2}\right)^p x_{2,2}\cdot\ldots\cdot x_{p+1,p+1}\! \sum_{\sigma,\pi \in\Sigma_p}\! \sgn(\sigma)\sgn(\pi) x_{\sigma(p+2)\pi(p+2)} \ldots x_{\sigma(n)\pi(n)}\\
&=\left(\frac{-1}{2}\right)^p x_{2,2}\cdot\ldots\cdot x_{p+1,p+1} p!\sum_{\pi\in\Sigma_p}\sgn(\pi)x_{p+2,\pi(p+2)}\ldots x_{n,\pi(n)}\\
&=\left(-\frac12\right)^p p! E_{p}(x_{2,2},\ldots, x_{p+1,p+1})\det(x_{i,j})_{p+2\leq i,j\leq n}\\
&= \left(-\frac12\right)^p p! E_{p}(x_{2,2},\ldots, x_{p+1,p+1}) \frac{(p+1)!}{2^p} E_p(x_{p+2,p+2},\ldots,x_{n,n})\\
& = (-1)^p \frac{(p+1)}{ 2^{2p} } \Xi(x^py^p).
\end{align*}

It follows that there is an induced algebra morphism
\begin{displaymath}
\tilde \Xi:\R[x,y,u]/\langle x^{p+1},y^{p+1},xu,yu,u^2-(-1)^p \frac{ (p+1) }{  2^{2p} }x^py^p\rangle \to (\R[X]/I)^H.
\end{displaymath}  
We argue as above to prove that $\tilde \Xi$ is injective. However, if $p$ is even, there are two elements whose images under $\tilde \Xi$ are of bidegree $(\frac{p}{2},\frac{p}{2})$, namely $x^\frac{p}{2}y^\frac{p}{2}$ and $u$. We show that the images of these elements are linearly independent. 

Let $\{X_i\}_{2\leq i\leq p+1}$ be the dual basis to $\{\sigma_{i}\}_{2\leq i\leq p+1}$ and let $\{X_{i,1}\}_{p+2\leq i\leq n}$ be the dual basis to $\{\omega_{i,1}\}_{p+2\leq i\leq n}$. 
By \eqref{eq_xiyj}, the form corresponding to $\tilde\Xi(x^{\frac{p}{2}}y^{\frac{p}{2}})$ is non-zero but vanishes evaluated at $(X_2,\dots,X_{p+1},X_{p+2,1},\dots,X_{n,1})$. For the associated form to $\tilde \Xi(u)$, we have 
\begin{align*}
&\det\left(\frac12 (\sigma_{i}\wedge\omega_{j,1} + \sigma_j\wedge\omega_{i,1})\right)_{\substack{2 \leq i \leq p+1\\p+2 \leq j \leq n}}(X_2,\dots,X_{p+1},X_{p+2,1},\dots,X_{n,1})\\
&= \frac{1}{2^p}\! \sum_{\pi \in \Sigma_p} \sgn(\pi) \sigma_2 \wedge \omega_{\pi(p+2),1} \wedge \dots \wedge\sigma_{p+1} \wedge \omega_{\pi(n),1}(X_2,\dots,X_{p+1},X_{p+2,1},\dots,X_{n,1})\\
&=(-1)^{\frac{p(p-1)}{2}}\frac{p!}{2^p}\neq 0.
\end{align*}

Finally, comparing dimensions again, we see that $\tilde \Xi$ is an algebra isomorphism.
\end{proof}

\section{Bases for $\FlagArea^{(p),\SO(n)}$ and $\FlagArea^{(p),\SO(n),*}$} \label{sec_bases_flag_case}

 In the following, $0\leq p\leq n-1, q:=n-p-1, m_k:=\min\{p,q,k,n-k-1\}$.

\subsection{Bases for $\FlagArea^{(p),\SO(n)}$}

We first recall the bases of $\FlagArea^{(p),\SO(n)}$ introduced in \cite{abardia_bernig_dann}.

For $\max\{0,k-q\} \leq a \leq \min\{k,p\}$, $\hat \eta_{k,a} \in \Omega^{n-1}(V \times \Flag_{1,p+1})$ is defined as the coefficient of $\alpha^a \beta^{k-a}$ in the expansion of 
\begin{equation}\label{expansion_hat_tau}
 \hat \eta_{\alpha,\beta}:=\bigwedge_{i=2}^{p+1}(\alpha \sigma_i+\omega_{i,1}) \wedge \bigwedge_{j=p+2}^n (\beta \sigma_{j}+\omega_{j,1}).
\end{equation}

In the case $2p=2k=n-1$ we set
\begin{equation}\label{omega_especial}
\hat \eta_{ex} := \sigma_{p+2} \wedge \dots \wedge \sigma_n \wedge \omega_{p+2,1} \wedge \dots \wedge \omega_{n,1} \in \Omega^{n-1}(V \times \Flag_{1,p+1}).
\end{equation}

We denote
\begin{equation}\label{eq_omega_a}
\hat \omega_{k,a} := \frac{\omega_n}{\vol(\Flag_{1,p+1})} \hat \eta_{k,a} \wedge \rho \in \Omega^m(V \times \Flag_{1,p+1}),\ \max\{0,k-q\}\leq a\leq\min\{k,p\}, 
\end{equation}
and, if $n$ is odd and $2p=n-1$, 
\begin{equation}\label{eq_omega_ex}
\hat \omega_{ex}:=\frac{\omega_n}{\vol(\Flag_{1,p+1})}\hat \eta_{ex} \wedge \rho \in \Omega^m(V \times \Flag_{1,p+1}).
\end{equation}
Here $\rho$ denotes the volume form of the fiber of the map $\Pi:V\times\Flag_{1,p+1}\to V\times S^{n-1}$. We remark that the factor in the definition of $\hat \omega_{k,a}$ does not appear explicitly in \cite{abardia_bernig_dann}, but implicitly by the fact that the volume form on the fiber should be normalized to volume $1$ (see \cite[Corollary 4.6]{abardia_bernig_dann}).

The smooth flag area measure associated to the form $\hat\omega_{k,a}$ (resp. $\hat \omega_{ex}$) is denoted by $\Phi_{k,a}\in\FlagArea^{(p),\SO(n)}$ (resp. $\Phi_{ex}$) and is defined for $0\leq p,k\leq n-1$, $\max\{0,k-q\}\leq a\leq\min\{p,k\}$. 

Another basis for $\FlagArea^{(p),\SO(n)}$ was given in \cite{abardia_bernig_dann}. This basis contains the elements $S_k^{(p)}$ in $\FlagArea^{(p),\SO(n)}$ previously introduced in \cite{hinderer_hug_weil}. The smooth flag area measures of this basis are denoted by $S_{k}^{(p),i}$ and defined, for $0\leq p,k\leq n-1$, $0\leq i\leq m_{k}$, as
\begin{equation}\label{eq_S_in_Phi}
S_k^{(p),i}= c_{n,k,p,i} \sum_{a=\min\{p,k\}-m_{k}}^{\min\{p,k\}-i} \binom{\min\{p,k\}-a}{i} \Phi_{k,a},
\end{equation}
where 
\begin{displaymath}
c_{n,k,p,i}=\binom{n-1}{k}^{-1}\binom{m_{k}}{i}^{-1}\binom{|k-q|+m_{k}}{i}^{-1}\binom{n-1}{i}.
\end{displaymath}
For the exceptional case $2p=2k=n-1$, the following notation is used:
$$\tilde S_{\frac{n-1}{2}}^{(\frac{n-1}{2})}=\Phi_{ex}.$$

For the indicated ranges, both sets $\{S_k^{(p),i}\}$ and $\{\Phi_{k,a}\}$ constitute a basis of $\FlagArea^{(p),\O(n)}$ and the sets $\{S_k^{(p),i},\tilde S_{\frac{n-1}{2}}^{(\frac{n-1}{2})}\}$ and $\{\Phi_{k,a},\Phi_{ex}\}$ a basis of $\FlagArea^{(p),\SO(n)}$. A straightforward computation shows that the inverse relation is given as follows.

\begin{lemma}\label{lemma:lincom}
\begin{displaymath}
\Phi_{j,a}=\sum_{s=\min\{p,j\}-a}^{\min\{p,q,j,n-j-1\}} (-1)^{a+\min\{p,j\}+s}c_{n,j,p,s}^{-1}\binom{s}{\min\{p,j\}-a}S_j^{(p),s}.
\end{displaymath}
\end{lemma}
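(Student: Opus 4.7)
The plan is to invert the explicit triangular linear relation \eqref{eq_S_in_Phi} that expresses $S_j^{(p),i}$ as a linear combination of the $\Phi_{j,a}$. Fix $j$ throughout and abbreviate $m := \min\{p,j\}$ and $m' := \min\{p,q,j,n-j-1\}=m_{j}$. First I would make the substitution $s = m - a$ in \eqref{eq_S_in_Phi}; under this change of variable the index $a$ ranges over $[m-m',\,m-i]$ exactly when $s$ ranges over $[i,m']$, so \eqref{eq_S_in_Phi} becomes
\begin{displaymath}
c_{n,j,p,i}^{-1}\, S_j^{(p),i} \;=\; \sum_{s=i}^{m'} \binom{s}{i}\, \Phi_{j,\,m-s}.
\end{displaymath}
This is an upper-triangular system (in $i$ and $s$) with unit diagonal $\binom{i}{i}=1$, so it is certainly invertible.

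Next I would invoke the classical binomial inversion formula: whenever $f(i)=\sum_{s\geq i}\binom{s}{i}g(s)$ is a finite sum, then $g(s)=\sum_{i\geq s}(-1)^{i-s}\binom{i}{s}f(i)$. This is a one-line consequence of the identity $\sum_{i=s}^{t}(-1)^{i-s}\binom{t}{i}\binom{i}{s}=\delta_{s,t}$, which in turn follows from Vandermonde or from the fact that the matrix $\bigl(\binom{s}{i}\bigr)$ and the matrix $\bigl((-1)^{i-s}\binom{i}{s}\bigr)$ are mutually inverse. Applying this with $g(s)=\Phi_{j,\,m-s}$ and $f(i)=c_{n,j,p,i}^{-1}S_j^{(p),i}$ yields
\begin{displaymath}
\Phi_{j,\,m-s} \;=\; \sum_{i=s}^{m'} (-1)^{i-s}\, \binom{i}{s}\, c_{n,j,p,i}^{-1}\, S_j^{(p),i}.
\end{displaymath}

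Finally I would substitute back $s = m - a$, rename the summation index from $i$ to $s$, and use $(-1)^{s-(m-a)}=(-1)^{a+m+s}$ (valid modulo $2$) to obtain
\begin{displaymath}
\Phi_{j,a} \;=\; \sum_{s=m-a}^{m'} (-1)^{a+m+s}\, c_{n,j,p,s}^{-1}\, \binom{s}{m-a}\, S_j^{(p),s},
\end{displaymath}
which is precisely the stated formula with $m=\min\{p,j\}$ and $m'=\min\{p,q,j,n-j-1\}$. There is no real obstacle here beyond bookkeeping on the summation limits; the only subtle point is verifying that the range $m-a\leq s\leq m'$ in the final formula corresponds correctly under the inversion to the original range $m-m'\leq a\leq m-i$ in \eqref{eq_S_in_Phi}, which is why the paper calls this a \emph{straightforward computation}.
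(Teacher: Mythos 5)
Your proposal is correct and fills in exactly the ``straightforward computation'' the paper alludes to: both approaches amount to inverting the triangular system \eqref{eq_S_in_Phi} via binomial inversion (with the substitution $s=m-a$ turning it into the standard form). The finite range is handled correctly, since after the change of variable both $s$ and $i$ run over $\{0,\dots,m_j\}$, so the $(m_j+1)\times(m_j+1)$ matrices $\bigl(\binom{s}{i}\bigr)$ and $\bigl((-1)^{i-s}\binom{i}{s}\bigr)$ are genuinely mutually inverse.
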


\begin{lemma} \label{lemma_glob_phi}
Let $\glob:\FlagArea^{(p),\SO(n)} \to \Area^{\SO(n)}$ be the globalization map. Then 
\begin{align*}
\glob S_k^{(p),i} & = S_k,\\
\glob \Phi_{k,a} & =\binom{q}{k-a}\binom{p}{a} S_k.
\end{align*}
\end{lemma}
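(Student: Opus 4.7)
The plan is to establish the second identity $\glob\Phi_{k,a}=\binom{q}{k-a}\binom{p}{a}S_k$ first by an explicit pushforward of the defining differential forms, and then to deduce $\glob S_k^{(p),i}=S_k$ by linearity via \eqref{eq_S_in_Phi}.

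By Lemma \ref{lemma_globalization}, $\glob\Phi_{k,a}$ is represented by $\hat\Pi_*\hat\omega_{k,a}$, where the projection $\hat\Pi:V\times\Flag_{1,p+1}\to V\times S^{n-1}$ coincides with the fibration $\Pi$ appearing in the definition of flag area measures; in particular $\rho$ is the fiber volume form of $\hat\Pi$, normalized so that the fiber has unit volume (see the remark after \eqref{eq_omega_ex}). Expanding the product in \eqref{expansion_hat_tau} as
\[
\hat\eta_{k,a}=\sum_{\substack{A\subset\{2,\dots,p+1\},\,|A|=a\\ B\subset\{p+2,\dots,n\},\,|B|=k-a}}\epsilon_{A,B}\,\sigma_{A\cup B}\wedge\omega_{(A\cup B)^c,1},
\]
each summand has the shape of one of the terms appearing in the translation and $\SO(n-1)$-invariant form $\eta_k:=\sum_{|I|=k,\,I\subset\{2,\dots,n\}}\epsilon_I\,\sigma_I\wedge\omega_{I^c,1}$ that represents $S_k$ on $SV$, and summing over $a$ recovers $\eta_k$ since every $I$ of size $k$ decomposes uniquely as $A\sqcup B$.

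The crux is to show that $\hat\Pi_*(\sigma_I\wedge\omega_{I^c,1}\wedge\rho)$ equals the same multiple of $\eta_k$ (modulo multiples of $\alpha$) for every $k$-subset $I\subset\{2,\dots,n\}$. This pushforward is translation invariant, and by the $\SO(n-1)$-equivariance of $\hat\Pi$ it is $\SO(n-1)$-invariant on $SV$; modulo multiples of $\alpha$, the space of such forms of bidegree $(k,n-1-k)$ is one-dimensional, spanned by $\eta_k$. Transitivity of $\SO(n-1)$ on $k$-subsets of $\{2,\dots,n\}$ then forces the constant to be independent of $I$. Combining with the combinatorial count of $\binom{p}{a}\binom{q}{k-a}$ pairs $(A,B)$ contributing to a given $I$, one obtains
\[
\hat\Pi_*\hat\omega_{k,a}=\binom{p}{a}\binom{q}{k-a}\,c\cdot\eta_k
\]
for a constant $c$ depending only on $k$ and the global normalizations. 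The calibration case $p=0$ (where $\Flag_{1,1}=S^{n-1}$ and $\Phi_{k,0}=S_k$ tautologically) then pins down $c$ so that $c\cdot\eta_k$ represents $S_k$, yielding $\glob\Phi_{k,a}=\binom{q}{k-a}\binom{p}{a}S_k$.

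For the first identity, applying $\glob$ to \eqref{eq_S_in_Phi} and inserting the formula just established gives
\[
\glob S_k^{(p),i}=c_{n,k,p,i}\sum_a\binom{\min\{p,k\}-a}{i}\binom{q}{k-a}\binom{p}{a}\,S_k,
\]
so it remains to verify the binomial identity $\sum_a\binom{\min\{p,k\}-a}{i}\binom{q}{k-a}\binom{p}{a}=c_{n,k,p,i}^{-1}$, which is a routine Chu--Vandermonde-type computation (the sanity check $p=0,\,i=0$ reduces to $\binom{n-1}{k}=c_{n,k,0,0}^{-1}$). The main technical obstacle is the invariant-forms step: one must carefully justify that, after fiber integration and modding out by $\alpha$, the answer is a scalar multiple of $\eta_k$ independent of $I$, and track the remaining normalizations of $\rho$ and of the prefactor $\omega_n/\vol(\Flag_{1,p+1})$ built into $\hat\omega_{k,a}$; once this is done cleanly, the binomial identity at the end is straightforward.
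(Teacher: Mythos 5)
Your proposal takes the opposite logical route from the paper: the paper quotes $\glob S_k^{(p),i}=S_k$ from \cite[Theorem 3]{abardia_bernig_dann} and then deduces the $\Phi_{k,a}$ formula via the inverse linear relation of Lemma~\ref{lemma:lincom}, whereas you try to compute $\glob\Phi_{k,a}$ directly by fiber integration and then recover the first identity from \eqref{eq_S_in_Phi}. The plan of pushing forward each term of $\hat\eta_{k,a}$ to a multiple of $\eta_k$ and counting the terms is reasonable, but it has a concrete error in the calibration step.

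Your claim that \emph{for $p=0$, $\Phi_{k,0}=S_k$ tautologically} is false. For $p=0$ we have $q=n-1$, so the lemma itself gives $\glob\Phi_{k,0}=\binom{n-1}{k}\binom{0}{0}S_k=\binom{n-1}{k}S_k$, and since $\Flag_{1,1}=S^{n-1}$ makes $\glob$ the identity, $\Phi_{k,0}=\binom{n-1}{k}S_k$. The same follows from \eqref{eq_S_in_Phi}: for $p=0$ one has $m_k=0$, so $S_k^{(0),0}=c_{n,k,0,0}\Phi_{k,0}=\binom{n-1}{k}^{-1}\Phi_{k,0}$, hence $\Phi_{k,0}=\binom{n-1}{k}S_k^{(0),0}$. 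With your stated calibration you would force $c\cdot\eta_k$ to equal $\binom{n-1}{k}^{-1}S_k$ rather than $S_k$, and the final formula would acquire a spurious $\binom{n-1}{k}^{-1}$. To fix this, you would need to know independently which form represents $S_k$ (that is, what normalization the classical $S_k$ carries in the convention of \cite{abardia_bernig_dann}); at that point you are essentially invoking the cited result, as the paper does.

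Two further points, less serious but worth tightening. First, the invariance argument for $\hat\Pi_*(\sigma_I\wedge\omega_{I^c,1}\wedge\rho)$ should not be phrased via $\SO(n-1)$-equivariance of $\hat\Pi$ alone: the pushforward is $\overline{\SO(n)}$-invariant because the Maurer--Cartan forms are left-invariant and the projection is $\overline{\SO(n)}$-equivariant, while the independence of the constant from $I$ comes from the \emph{right} $\SO(n-1)$-action permuting the frame indices, which covers the identity on $SV$ and preserves fiber integration. These are two distinct group actions and conflating them obscures what is actually being used. Second, the closing binomial identity $\sum_a\binom{\min\{p,k\}-a}{i}\binom{q}{k-a}\binom{p}{a}=c_{n,k,p,i}^{-1}$ is not a standard Chu--Vandermonde instance for $i>0$; it is precisely the identity underlying \cite[Theorem 3]{abardia_bernig_dann}, so asserting it as routine effectively re-imports the quoted theorem by the back door.
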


\proof
The first equation was shown in \cite[Theorem 3]{abardia_bernig_dann}. The second equation can be deduced from the first one and Lemma \ref{lemma:lincom}.
\endproof

\subsection{Bases for $\FlagArea^{(p),\SO(n),*}$}

\begin{corollary} \label{cor_dual_bases}
 The dual bases are related by 
$$
  \Phi_{k,a}^* = \sum_{i=0}^{\min\{k,n-k-1,p,q\}} c_{n,k,p,i} \binom{\min\{k,p\}-a}{i} S_k^{(p),i,*},$$
  $$
  S_k^{(p),i,*} = c_{n,k,p,i}^{-1} \sum_{a=\min\{p,k\}-i}^{\min\{p,k\}} (-1)^{a+\min\{p,k\}+i}\binom{i}{\min\{p,k\}-a} \Phi_{k,a}^*.
$$
\end{corollary}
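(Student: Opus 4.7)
The plan is to deduce the two formulas purely by dualizing the change of basis on $\FlagArea^{(p),\SO(n)}$ given by \eqref{eq_S_in_Phi} and Lemma \ref{lemma:lincom}. Recall the elementary linear algebra fact: if $\{e_\alpha\}$ and $\{f_\beta\}$ are two bases of a finite-dimensional vector space with $e_\alpha=\sum_\beta A_{\alpha\beta}f_\beta$, then their dual bases in $V^*$ satisfy $f_\beta^*=\sum_\alpha A_{\alpha\beta}e_\alpha^*$. Since both $\{S_k^{(p),i}\}$ and $\{\Phi_{k,a}\}$ are bases of the same graded component $\FlagArea^{(p),\SO(n)}_k$, we may apply this componentwise in degree $k$.

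For the first formula, I would read off the change-of-basis coefficients from \eqref{eq_S_in_Phi}: the coefficient of $\Phi_{k,a}$ in $S_k^{(p),i}$ is $c_{n,k,p,i}\binom{\min\{k,p\}-a}{i}$, understood to be zero outside the range of summation. Transposing gives
\[
\Phi_{k,a}^* \;=\; \sum_{i} c_{n,k,p,i}\binom{\min\{k,p\}-a}{i}\,S_k^{(p),i,*}.
\]
The only subtlety is the summation range: the constraint $a\leq \min\{p,k\}-i$ from \eqref{eq_S_in_Phi} becomes $i\leq \min\{p,k\}-a$, which is already enforced by the vanishing of $\binom{\min\{k,p\}-a}{i}$ for $i>\min\{p,k\}-a$; hence I may extend the sum to $0\leq i\leq m_k=\min\{k,n-k-1,p,q\}$ without changing its value.

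For the second formula, I would apply the same dualization to Lemma \ref{lemma:lincom}: reading off that the coefficient of $S_k^{(p),s}$ in $\Phi_{k,a}$ is $(-1)^{a+\min\{p,k\}+s}c_{n,k,p,s}^{-1}\binom{s}{\min\{p,k\}-a}$, transposition yields
\[
S_k^{(p),i,*} \;=\; c_{n,k,p,i}^{-1}\sum_{a}(-1)^{a+\min\{p,k\}+i}\binom{i}{\min\{p,k\}-a}\,\Phi_{k,a}^*,
\]
where the range on $a$ inherited from Lemma \ref{lemma:lincom} (namely $a\geq \min\{p,k\}-i$, together with $a\leq \min\{p,k\}$ forced by the binomial coefficient) gives exactly the stated bounds $\min\{p,k\}-i\leq a\leq \min\{p,k\}$, and $c_{n,k,p,s}^{-1}$ may be pulled out since $s=i$ is fixed.

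There is no real obstacle: the only thing to verify is the bookkeeping on the summation indices and the observation that the two resulting linear maps are mutually inverse, which is automatic from the fact that \eqref{eq_S_in_Phi} and Lemma \ref{lemma:lincom} are inverse to each other. In particular, no new combinatorial identity needs to be proved, and one does not need to re-derive the basis property — it is given by \cite{abardia_bernig_dann} and already used above.
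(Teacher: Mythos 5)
Your argument is correct and is precisely the route the paper intends: the corollary is stated without proof because it is the immediate transpose of the change-of-basis formulas \eqref{eq_S_in_Phi} and Lemma~\ref{lemma:lincom}, and your bookkeeping on the summation ranges (using the vanishing of the binomial coefficients outside the original bounds) is the only content. Nothing further is needed.
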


\medskip
Let us introduce another basis of $\FlagArea^{(p),\SO(n),*}$. 
For $\max\{0,k-q\} \leq a \leq \min\{k,p\}$, we define $ \tilde\tau_{k,a} \in \mathcal{J}^{n,tr}$ by
\begin{equation}\label{expansion_tau_tilde}
\tilde\tau_{k,a}:=*_1^{-1} E_{a}(x_{22},\ldots,x_{p+1,p+1}) E_{k-a}(x_{p+2,p+2},\ldots,x_{n,n}).
\end{equation}

Let $\tilde\Phi_{k,a}^*$ be the element in $\FlagArea^{(p),\SO(n),*}$ given by $\tilde \tau_{k,a}$. If $(k,p) \neq (\frac{n-1}{2},\frac{n-1}{2})$, the elements of $\{\tilde\Phi_{k,a}^*\}_{\max\{0,k-q\}\leq a\leq\min\{p,k\}}$ constitute a basis of $\FlagArea^{(p),\SO(n),*}_k$ by \cite[Theorem~4]{abardia_bernig_dann}.

In the exceptional case $2p=2k=n-1$, we define
\begin{displaymath}
\tilde \tau_{ex}:=*_1^{-1} \det(x_{ij})_{\substack{2 \leq i \leq p+1\\p+2 \leq j \leq n}} \in\mathcal{J}^{n,tr}.
\end{displaymath}

The smooth dual flag area measure associated to the form $\tilde\tau_{ex}$ will be denoted by $\tilde\Phi_{ex}^*$. For $2p=2k=n-1$, the elements of $\{\tilde\Phi_{k,a}^*\}_{0\leq a\leq k}$ together with $\tilde\Phi_{ex}^*$ constitute a basis of $\FlagArea^{(p),\SO(n),*}_k$. This follows again from \cite[Theorem~4]{abardia_bernig_dann} and the fact that $\hat\tau_{ex} \wedge \tilde\tau_{ex} \neq 0$ but $\hat\tau_{k,a} \wedge \tilde\tau_{ex}=0$. 

The following lemma expresses the elements of the dual basis of $\{\Phi_{k,a}\}\cup\{\Phi_{ex}\}$ and $\{S_k^{(p),i}\}$ in the algebra given in Proposition~\ref{prop_algebra_flag_case}. 

\begin{lemma}\label{lemma_dual_omega}
\begin{enumerate}
\item[(i)] For $0\leq k\leq n-1$ and $\max\{0,k-q\}\leq a\leq\min\{p,k\}$
\begin{displaymath}
\Phi_{k,a}^*
=\frac{1}{\omega_n} \binom{q}{k-a}^{-1}\binom{p}{a}^{-1}\frac{1}{a!(k-a)!}x^ay^{k-a}. 
\end{displaymath}
\medskip
\item[(ii)] For $0\leq k\leq n-1$ and $0\leq i\leq \min\{p,q,k,n-k-1\}$, 
\begin{align*}
S_{k}^{(p),i,*} & =\frac{(-1)^{i+\min\{p,k\}}}{\omega_n c_{n,k,p,i}}  \cdot \\
& \quad \cdot \sum_{a=\min\{p,k\}-i}^{\min\{p,k\}} (-1)^a \frac{1}{a!(k-a)!} \binom{i}{\min\{p,k\}-a}\binom{q}{k-a}^{-1}\binom{p}{a}^{-1}x^ay^{k-a}.
\end{align*}

\medskip
\item[(iii)] If $2p=2k=n-1$, then
\begin{align*}
\Phi_{ex}^* &=\frac{(-1)^p}{\omega_n\, p!}u.
\end{align*}
\end{enumerate}
\end{lemma}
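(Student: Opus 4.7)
The strategy is to route each dual flag area measure through the auxiliary basis $\{\tilde\Phi_{k,a}^*\}$ (supplemented by $\tilde\Phi_{ex}^*$ in the exceptional case), which is directly visible in the algebra isomorphism of Proposition~\ref{prop_algebra_flag_case}. Indeed, from the definition of $\Xi$ in the proof of that proposition one has $\Xi(x^ay^{k-a})=a!(k-a)!\,E_a(x_{2,2},\ldots,x_{p+1,p+1})\,E_{k-a}(x_{p+2,p+2},\ldots,x_{n,n})$ and $\Xi(u)=\det(x_{ij})_{2\le i\le p+1,\,p+2\le j\le n}$, and the map $\Psi$ of Theorem~\ref{mainthm_algebra} sends a polynomial $P\in\R[X]/I$ to the dual rotation measure represented by $*_1^{-1}P$. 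Combined with the definitions of $\tilde\tau_{k,a}$ and $\tilde\tau_{ex}$, this yields the direct identifications
\begin{displaymath}
\tilde\Phi_{k,a}^* = \frac{1}{a!(k-a)!}\,x^ay^{k-a}, \qquad \tilde\Phi_{ex}^* = u
\end{displaymath}
inside $\FlagArea^{(p),\SO(n),*}$. The lemma therefore reduces to expressing the primal-dual basis $\{\Phi_{k,a}^*,\Phi_{ex}^*\}$ in terms of $\{\tilde\Phi_{k,a}^*,\tilde\Phi_{ex}^*\}$, which I will accomplish via the pairing formula $\langle L,\Phi\rangle=\int\tilde\tau\wedge\hat\omega$ from Definition~\ref{def_smooth_dual_area}.

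\textbf{The core pairing (Part~(i)).} Using $x_{ii}=\sigma_i\wedge\omega_{i,1}$, I expand $E_aE_{k-a}$ as a signed sum over pairs $S\subset\{2,\ldots,p+1\}$ and $T\subset\{p+2,\ldots,n\}$ with $|S|=a$, $|T|=k-a$. Applying $*_1^{-1}$ leaves the $\omega_{\cdot,1}$-factors on $S\cup T$ unchanged while sending the $\sigma$-factors to $\{1\}\cup S^c\cup T^c$, where $S^c$ and $T^c$ denote the respective complements. Expanding $\hat\eta_{j,b}$ from \eqref{expansion_hat_tau} analogously, the disjointness of $\sigma$-indices required for a nonvanishing top form forces $(j,b)=(k,a)$ and pairs each $(S,T)$-term of $\tilde\tau_{k,a}$ with exactly one $(S,T)$-term of $\hat\eta_{k,a}$. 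After verifying that all $\binom{p}{a}\binom{q}{k-a}$ matched summands contribute with the same sign, and using that $\rho$ is normalised to volume $1$ on the fibre of $\Pi$, one obtains
\begin{displaymath}
\langle\tilde\Phi_{k,a}^*,\Phi_{j,b}\rangle = \omega_n\binom{p}{a}\binom{q}{k-a}\delta_{jk}\delta_{ab},
\end{displaymath}
so that $\tilde\Phi_{k,a}^*=\omega_n\binom{p}{a}\binom{q}{k-a}\Phi_{k,a}^*$. Substituting this into the identification from the previous paragraph proves~(i).

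\textbf{Parts~(ii) and~(iii).} Part~(ii) is obtained by substituting (i) into the change-of-basis formula for $S_k^{(p),i,*}$ in Corollary~\ref{cor_dual_bases} and rewriting the sign as $(-1)^{a+\min\{p,k\}+i}=(-1)^{i+\min\{p,k\}}(-1)^a$. For~(iii), I repeat the pairing analysis with $\tilde\tau_{ex}=*_1^{-1}\det(x_{ij})$. An inspection of the determinant expansion shows that every resulting monomial has its $\sigma$- and $\omega_{\cdot,1}$-indices partitioning $\{2,\ldots,n\}$ into two subsets of size $p$; comparing with $\hat\eta_{k,a}$ reveals that the $\omega_{\cdot,1}$-indices of the two forms always overlap, so $\tilde\tau_{ex}\wedge\hat\eta_{k,a}=0$ for every admissible $(k,a)$. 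Only the pairing $\langle\tilde\Phi_{ex}^*,\Phi_{ex}\rangle$ survives, and among the $2^p$ sign-expansions of the factors $x_{ij}=\tfrac12(\sigma_i\wedge\omega_{j,1}+\sigma_j\wedge\omega_{i,1})$ in the determinant only the choice $\epsilon=(1,\ldots,1)$ survives the wedge with $\hat\eta_{ex}$; the $p!$ permutations then contribute the same top form, giving $\langle\tilde\Phi_{ex}^*,\Phi_{ex}\rangle=(-1)^p\omega_n\,p!$ and hence $\Phi_{ex}^*=\frac{(-1)^p}{\omega_n p!}u$.

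\textbf{Main obstacle.} The delicate point is the sign and normalisation bookkeeping. After the reordering of wedge factors inside $E_aE_{k-a}$ (respectively $\det(x_{ij})$) and the application of $*_1^{-1}$, one must verify that all matched summands contribute with the same sign so that a pure binomial coefficient (respectively the factor $(-1)^p p!$), rather than an alternating sum, appears. In parallel, the normalisation factor $\omega_n/\vol(\Flag_{1,p+1})$ built into $\hat\omega_{k,a}$ and $\hat\omega_{ex}$ must be carefully balanced against the fibre volume contribution to $\int$ so that precisely the coefficient $\omega_n^{-1}$ announced in the lemma emerges.
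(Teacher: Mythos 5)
Your proof follows the paper's own route: reduce to the auxiliary basis $\tilde\Phi_{k,a}^*$ (and $\tilde\Phi_{ex}^*$), identify $\tilde\Phi_{k,a}^*=\frac{x^ay^{k-a}}{a!(k-a)!}$ and $\tilde\Phi_{ex}^*=u$ using Lemma~\ref{lemma_det_as_xii}(iii), and then compute the pairings $\langle\tilde\Phi_{k,a}^*,\Phi_{j,b}\rangle$ and $\langle\tilde\Phi_{ex}^*,\Phi_{ex}\rangle$; parts~(ii) and~(iii) are then handled exactly as in the paper. The one step you flag but do not actually carry out---verifying that all $\binom{p}{a}\binom{q}{k-a}$ matched subset-summands contribute with a uniform sign---is where the paper's argument is slicker. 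Rather than comparing $(S,T)$-terms directly, it packages the forms into generating functions: using \cite[Eq.~(47)]{bernig_fu06} one has $\tilde\tau_{\tilde\alpha,\tilde\beta}=\sigma_1\wedge\bigwedge_{i=2}^{p+1}(\tilde\alpha\sigma_i+\omega_{i,1})\wedge\bigwedge_{j=p+2}^n(\tilde\beta\sigma_j+\omega_{j,1})$, whence
\begin{displaymath}
\hat\tau_{\alpha,\beta}\wedge\tilde\tau_{\tilde\alpha,\tilde\beta}=(\alpha+\tilde\alpha)^p(\beta+\tilde\beta)^{q}\,\sigma_1\wedge\cdots\wedge\sigma_n\wedge\omega_{2,1}\wedge\cdots\wedge\omega_{n,1},
\end{displaymath}
and extracting the coefficient of $\alpha^a\beta^{k-a}\tilde\alpha^{p-b}\tilde\beta^{q-l+b}$ yields both the orthogonality $\delta_{jk}\delta_{ab}$ and the constant $\omega_n\binom{p}{a}\binom{q}{k-a}$ at once, with no case-by-case sign bookkeeping. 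You should replace your asserted sign check by this generating-function identity (or verify the signs explicitly) to make the proof of~(i) self-contained; everything else in your proposal is sound.
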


\begin{proof}
\begin{enumerate}
\item[(i)]
Let
\begin{align}
\tilde \tau_{\alpha,\beta} & :=*_1^{-1} \sum_{i=0}^p \alpha^{p-i} E_i(\sigma_2 \wedge \omega_{2,1},\ldots,\sigma_{p+1} \wedge \omega_{p+1,1}) \sum_{j=0}^q \beta^{q-j} E_j(\sigma_{p+2} \wedge \omega_{p+2,1},\ldots,\sigma_{n} \wedge \omega_{n,1}) \nonumber \\
& = \sigma_1\wedge\bigwedge_{i=2}^{p+1}(\alpha \sigma_i+\omega_{i,1})\wedge \bigwedge_{j=p+2}^n (\beta \sigma_j+\omega_{j,1}), \label{eq_expansion_tilde_tau}
\end{align}
where the second equation follows from  \cite[Eq. (47)]{bernig_fu06}. Then the coefficient of $\alpha^{p-a} \beta^{q-k+a}$ in $\tilde \tau_{\alpha,\beta}$ is $\tilde \tau_{k,a}$. 

Let $k,a$ be fixed. We first compute $\langle \tilde\Phi_{l,b}^*,\Phi_{k,a}\rangle$ for fixed indices $l,b$ by using the expansions in \eqref{expansion_hat_tau} and \eqref{eq_expansion_tilde_tau}. For $\alpha,\beta,\tilde\alpha,\tilde\beta\in\R$, one can easily check that 
\begin{displaymath}
\hat\tau_{\alpha,\beta} \wedge \tilde\tau_{\tilde\alpha,\tilde\beta} =(\alpha+\tilde\alpha)^p (\beta+\tilde\beta)^{n-p-1}\sigma_1 \wedge \sigma_2 \wedge \dots \wedge \sigma_n \wedge\omega_{2,1} \wedge \dots \wedge \omega_{n,1}.
\end{displaymath}
Since $\hat\tau_{k,a}\wedge\tilde\tau_{l,b}$ is the coefficient of $\alpha^{a}\beta^{k-a}\tilde\alpha^{p-b}\tilde\beta^{n-l-1-p+b}$ in this expression, we find that $\hat\tau_{k,a}\wedge\tilde\tau_{l,b}=0$ if $b\neq a$ or $l\neq k$. In the case $b=a$ and $l=k$, we have, by Definition~\ref{def_smooth_dual_area},

\begin{align*}
\langle \tilde\Phi_{k,a}^*,\Phi_{k,a}\rangle &= \frac{\omega_n}{\vol(\Flag_{1,p+1})} \int \tilde\tau_{k,a} \wedge \hat\tau_{k,a} \wedge \rho\\
& = 
 \frac{\omega_n}{\vol(\Flag_{1,p+1})} \binom{p}{a}\binom{n-p-1}{k-a}\int \sigma_1\wedge\dots\wedge\sigma_n\wedge\omega_{2,1}\wedge\dots\wedge\omega_{n,1}\wedge\rho
\\&=\omega_n \binom{p}{a}\binom{n-p-1}{k-a}.  
\end{align*}
Hence, 
\begin{equation}\label{dual_in_tau}
\Phi^*_{k,a}= \frac{1}{\omega_n} \binom{q}{k-a}^{-1}\binom{p}{a}^{-1}\tilde\Phi_{k,a}^*.
\end{equation}

The statement now follows from the fact that $\tilde \Phi_{k,a}^*= \frac{x^{a}}{a!} \frac{y^{k-a}}{(k-a)!}$, which is a consequence of Lemma \ref{lemma_det_as_xii}(iii).

\item[(ii)] This follows from Corollary \ref{cor_dual_bases} and (i). 

\item[(iii)] We first show that 
\begin{displaymath}
\Phi_{ex}^*=\frac{(-1)^{p}}{\omega_n\, p!}\tilde\Phi_{ex}^*,
\end{displaymath}
which follows from 
\begin{align*}
&\langle \tilde\Phi_{ex}^*,\Phi_{ex}\rangle=\frac{\omega_n}{\vol(\Flag_{1,p+1})}
\int \tilde \tau_{ex} \wedge \sigma_{p+2}\wedge\dots\wedge\sigma_n\wedge\omega_{p+2,1}\wedge\dots\wedge\omega_{n,1} \\
&=\frac{\omega_n}{\vol(\Flag_{1,p+1})}
(-1)^{\binom{p}{2}} p!
\int *_1^{-1}(\sigma_{p+2}\wedge \dots \wedge \sigma_n \wedge \omega_{2,1} \wedge \dots \wedge \omega_{p+1,1})\\
& \quad  \wedge \sigma_{p+2}\wedge\dots\wedge\sigma_n\wedge\omega_{p+2,1}\wedge\dots\wedge\omega_{n,1} \wedge \rho\\
&=\frac{\omega_n}{\vol(\Flag_{1,p+1})}\, p!
\int \sigma_{1}\wedge \dots \wedge \sigma_{p+1} \wedge \omega_{2,1} \wedge \dots \wedge \omega_{p+1,1} \\
& \quad \wedge \sigma_{p+2}\wedge\dots\wedge\sigma_n\wedge\omega_{p+2,1}\wedge\dots\wedge\omega_{n,1} \wedge \rho\\
&=\frac{\omega_n}{\vol(\Flag_{1,p+1})}
(-1)^{p} p!
\int \sigma_{1}\wedge \dots \wedge \sigma_n \wedge \omega_{2,1} \wedge \dots \wedge \omega_{n,1} \wedge \rho\\
&=(-1)^{p}p!\omega_n.
\end{align*}
The proof is finished by noting that $\tilde \Phi_{ex}^*=u$.
\end{enumerate}
\end{proof}

The globalization map $\glob:\FlagArea^{(p),\SO(n)} \to \Area^{\SO(n)}$ induces an algebra morphism $\glob^*:\Area^{\SO(n),*} \to \FlagArea^{(p),\SO(n),*}$. In terms of the algebraic descriptions it is given as $t \mapsto x+y$. This follows from Lemma \ref{lemma_glob_dual} since $t$ is mapped to $E_1(x_{22},\ldots,x_{nn})=E_1(x_{22},\ldots,x_{p+1,p+1})+E_1(x_{p+2,p+2},\ldots,x_{nn})$. By Lemma \ref{lemma_dual_omega} and \eqref{eq_S_in_Phi} we thus have
\begin{align*}
\langle t,S_1\rangle & = \langle t,\glob S_1^{(p),0}\rangle = \langle \glob^*(t),S_1^{(p),0}\rangle\\
& =\left\langle x+y,S_1^{(p),0}\right\rangle =\left\langle \omega_n(p\Phi_{1,1}^*+q\Phi_{1,0}^*),\frac{1}{n-1} (\Phi_{1,0}+\Phi_{1,1})\right\rangle=\omega_n,
\end{align*}
which implies that $t=\omega_n S_1^*$. Hence the constant $c$ from the end of Section~\ref{sec_rotation_measures} equals $\omega_n$.

\section{Explicit additive kinematic formulas} \label{sec_explicit_kinematic_flag_case}

The aim in this section is to obtain explicit additive kinematic formulas for $\FlagArea^{(p),\SO(n)}$. We denote by
$$A_{1,p+1}^G:\FlagArea^{(p),G}\to\FlagArea^{(p),G}\otimes\FlagArea^{(p),G}$$
the additive kinematic operator for flag area measures in $\FlagArea^{(p),G}$ with $G$ either $\O(n)$ or $\SO(n)$.

\begin{theorem} \label{thm_kinformulas_phi}
\begin{enumerate}
\item[(i)] For $0\leq k\leq n-1$ and $\max\{0,k-q\}\leq a\leq\min\{p,k\}$, the additive kinematic formulas for $\O(n)$ are given by
\begin{displaymath}
A_{1,p+1}^{\O(n)}(\Phi_{k,a})=\frac{1}{\omega_n} \sum_{j=0}^k\sum_{b=\max\{0,j-k+a\}}^{\min\{a,j\}}
c^{k,a}_{j,b}\Phi_{j,b}\otimes\Phi_{k-j,a-b},
\end{displaymath}
where
\begin{displaymath}
c^{k,a}_{j,b}=\binom{q}{j-b}^{-1}\binom{p}{b}^{-1}\binom{q-k+j+a-b}{j-b}\binom{p-a+b}{b}.
\end{displaymath}
\item[(ii)] The additive kinematic formulas for $\SO(n)$ are given as follows. If $(k,p) \neq (n-1,\frac{n-1}{2})$, then
\begin{displaymath}
A^{\SO(n)}_{1,p+1}(\Phi_{k,a})=A_{1,p+1}^{\O(n)}(\Phi_{k,a}).
\end{displaymath}
If $k=2p=n-1$, then
\begin{displaymath}
A^{\SO(n)}_{1,p+1}(\Phi_{n-1,\frac{n-1}{2}})=A_{1,p+1}^{\O(n)}(\Phi_{n-1,\frac{n-1}{2}})+\frac{(-1)^p(p+1)}{2^{2p}  \omega_n}\Phi_{ex}\otimes\Phi_{ex},
\end{displaymath}
and
\begin{displaymath}
A^{\SO(n)}_{1,p+1}(\Phi_{ex})=\frac{1}{\omega_n}\left(\Phi_{ex}\otimes\Phi_{0,0}+\Phi_{0,0}\otimes\Phi_{ex}\right).
\end{displaymath}
\end{enumerate}
\end{theorem}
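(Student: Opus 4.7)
By Theorem \ref{thm_ftaig}, the coproduct $A_{1,p+1}^{\SO(n)}$ on $\FlagArea^{(p),\SO(n)}$ dualizes to the convolution product on $\FlagArea^{(p),\SO(n),*}$, which by Proposition \ref{prop_algebra_flag_case} is nothing but polynomial multiplication in $\R[x,y]/\langle x^{p+1},y^{q+1}\rangle$ (respectively in $\R[x,y,u]/\langle x^{p+1},y^{p+1},xu,yu,u^2-(-1)^p\tfrac{p+1}{2^{2p}}x^py^p\rangle$ when $p=q$). Since Lemma \ref{lemma_dual_omega}(i) gives the explicit formula
\begin{displaymath}
\Phi_{k,a}^* = \frac{1}{\omega_n \binom{q}{k-a}\binom{p}{a}\,a!(k-a)!}\,x^a y^{k-a},
\end{displaymath}
the entire theorem is reduced to a monomial multiplication in a concrete algebra. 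Writing $A_{1,p+1}^{\O(n)}(\Phi_{k,a})=\sum c^{k,a}_{j,b}\Phi_{j,b}\otimes\Phi_{k-j,a-b}$ is then equivalent to the identity $\Phi_{j,b}^*\cdot\Phi_{k-j,a-b}^*=c^{k,a}_{j,b}\Phi_{k,a}^*$ in the algebra.

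\textbf{Proof of (i).} Multiply the two monomial expressions for $\Phi_{j,b}^*$ and $\Phi_{k-j,a-b}^*$ given by Lemma \ref{lemma_dual_omega}(i); the product is $x^ay^{k-a}$ times an explicit scalar, and comparing with $\Phi_{k,a}^*$ yields
\begin{displaymath}
c^{k,a}_{j,b}=\frac{1}{\omega_n}\cdot\frac{\binom{q}{k-a}\binom{p}{a}\,a!(k-a)!}{\binom{q}{j-b}\binom{p}{b}\binom{q}{k-j-a+b}\binom{p}{a-b}\,b!(j-b)!(a-b)!(k-j-a+b)!}.
\end{displaymath}
The standard identities $\binom{p}{a}\binom{a}{b}=\binom{p}{b}\binom{p-b}{a-b}$ and $\binom{q}{k-a}\binom{k-a}{j-b}=\binom{q}{j-b}\binom{q-j+b}{k-a-j+b}$, used to rewrite the quotients $\binom{p}{a}\binom{a}{b}/\binom{p}{a-b}=\binom{p-a+b}{b}$ and $\binom{q}{k-a}\binom{k-a}{j-b}/\binom{q}{k-j-a+b}=\binom{q-k+j+a-b}{j-b}$, bring this expression exactly into the form claimed in the theorem. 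The range of indices $\max\{0,j-k+a\}\leq b\leq\min\{a,j\}$ comes from positivity of the binomial arguments, and outside this range the relevant $\Phi_{j,b}$ vanishes.

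\textbf{Proof of (ii).} When $(k,p)\neq(n-1,\tfrac{n-1}{2})$, the target $\Phi_{k,a}^*$ is a pure $x^ay^{k-a}$-monomial, and the product of two such monomials in the $\SO(n)$-algebra is computed purely from the relations $x^{p+1}=y^{p+1}=0$; in particular no $u$-contribution can arise, so the coefficients in the $\SO(n)$-case coincide with those in the $\O(n)$-case. The exceptional situation occurs precisely when $k=2p=n-1$, so $a=p$ and $\Phi_{n-1,p}^*$ is a multiple of $x^py^p$. Here the relation $u^2=(-1)^p\tfrac{p+1}{2^{2p}}x^py^p$ produces one additional contribution. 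Using Lemma \ref{lemma_dual_omega}(iii),
\begin{displaymath}
\Phi_{ex}^*\cdot\Phi_{ex}^* = \frac{1}{\omega_n^2 (p!)^2}u^2 = \frac{(-1)^p(p+1)}{\omega_n^2 (p!)^2 2^{2p}}x^py^p = \frac{(-1)^p(p+1)}{\omega_n\,2^{2p}}\Phi_{n-1,p}^*,
\end{displaymath}
which is exactly the extra coefficient $\tfrac{(-1)^p(p+1)}{2^{2p}\omega_n}$ of $\Phi_{ex}\otimes\Phi_{ex}$ in $A^{\SO(n)}_{1,p+1}(\Phi_{n-1,p})$. For the last formula, $\Phi_{ex}^*=\tfrac{(-1)^p}{\omega_n p!}u$ together with $xu=yu=0$ implies $\Phi_{j,b}^*\cdot\Phi_{ex}^*=0$ whenever $(j,b)\neq(0,0)$, while $\Phi_{0,0}^*\cdot\Phi_{ex}^*=\tfrac{1}{\omega_n}\Phi_{ex}^*$, and symmetrically for the right factor; dualizing yields the stated two-term expression for $A^{\SO(n)}_{1,p+1}(\Phi_{ex})$.

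\textbf{Main obstacle.} There is no genuine obstacle beyond binomial bookkeeping: the heart of the argument is just multiplication of monomials in an explicit algebra and a transparent use of the relation $u^2=(-1)^p\tfrac{p+1}{2^{2p}}x^py^p$. The only subtle point is ensuring that the exceptional term $\Phi_{ex}\otimes\Phi_{ex}$ really only contributes to the case $k=2p=n-1$ (and in no other $(k,a)$), which is immediate from the bidegree of $u^2$ in the bigrading used in the proof of Proposition \ref{prop_algebra_flag_case}.
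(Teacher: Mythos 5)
Your proposal is correct and follows essentially the same route as the paper: dualize via Theorem \ref{thm_ftaig}, use the monomial expressions from Lemma \ref{lemma_dual_omega} in the algebra of Proposition \ref{prop_algebra_flag_case}, and read off the coefficients from the product $\Phi_{j,b}^*\cdot\Phi_{k-j,a-b}^*$ (resp. $(\Phi_{ex}^*)^2$ and $\Phi_{0,0}^*\cdot\Phi_{ex}^*$ in the exceptional case); your binomial simplifications check out. The only thing the paper adds beyond this is an independent consistency check of the constants via the globalization map, which is not needed for the proof itself.
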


\begin{proof}
\begin{enumerate}
\item[(i)] Using the algebraic structure of $\FlagArea^{(p),\O(n),*}$ and Theorem~\ref{thm_ftaig} we compute the coefficient $c^{k,a}_{j,b,l,c}$ of $\Phi_{j,b} \otimes \Phi_{l,c}$ in $A^{\O(n)}_{1,p+1}(\Phi_{k,a})$ as
\begin{align*}
c^{k,a}_{j,b,l,c} & =\left\langle A^{\O(n)}_{1,p+1}(\Phi_{k,a}),\Phi_{j,b}^*\otimes\Phi_{l,c}^*\right\rangle\\
& =\left\langle \Phi_{k,a},A_{1,p+1}^{\O(n),*}(\Phi_{j,b}^*\otimes\Phi_{l,c}^*)\right\rangle\\
& =\langle \Phi_{k,a},\Phi_{j,b}^* \cdot \Phi_{l,c}^* \rangle.
\end{align*}

We have 
\begin{align*}
\Phi_{j,b}^* \cdot \Phi_{l,c}^* & = \frac{1}{\omega_n^2} \binom{q}{j-b}^{-1} \binom{p}{b}^{-1}\binom{q}{l-c}^{-1}\binom{p}{c}^{-1}
\\&\qquad \cdot\frac{1}{b!(j-b)!c!(l-c)!}x^{b}y^{j-b}x^{c}y^{l-c}\\
& =  \frac{1}{\omega_n}\binom{q}{j-b}^{-1} \binom{p}{b}^{-1}\binom{q}{l-c}^{-1}\binom{p}{c}^{-1}
\\&\qquad \cdot \binom{b+c}{b} \binom{j+l-b-c}{j-b} \binom{q}{j+l-b-c}\binom{p}{b+c}  \Phi_{j+l,b+c}^*.
\end{align*}

It follows that $c^{k,a}_{j,b,l,c}$ equals zero unless $k=j+l$ and $a=b+c$ and the result follows. 

Let us double check the constants in this formula. Clearly the additive kinematic formulas commute with the globalization map $\glob:\FlagArea^{(p),\O(n)} \to \Area^{\O(n)}$. By Lemma \ref{lemma_glob_phi}, $\glob \otimes \glob$ applied to the right hand side gives 
\begin{align*}
\frac{1}{\omega_n} & \sum_{j=0}^k\sum_{b=\max\{0,j-k+a\}}^{\min\{a,j\}} 
c^{k,a}_{j,b} \glob \Phi_{j,b} \otimes \glob \Phi_{k-j,a-b} \\
& = \frac{1}{\omega_n} \sum_{j=0}^k\sum_{b=\max\{0,j-k+a\}}^{\min\{a,j\}}
c^{k,a}_{j,b} \binom{q}{j-b} \binom{p}{b} S_{j} \otimes \binom{q}{k-j-a+b} \binom{p}{a-b} S_{k-j} \\
& = \frac{1}{\omega_n} \sum_{j=0}^k \sum_{b=\max\{0,j-k+a\}}^{\min\{a,j\}}
\binom{q-k+j+a-b}{j-b}\binom{p-a+b}{b} \binom{q}{k-j-a+b} \binom{p}{a-b} S_{j} \otimes S_{k-j} \\
& = \frac{1}{\omega_n} \binom{q}{k-a} \binom{p}{a} \sum_{j=0}^k  \sum_{b=\max\{0,j-k+a\}}^{\min\{a,j\}}
\binom{a}{b}   \binom{k-a}{j-b} S_{j} \otimes S_{k-j} \\
& = \frac{1}{\omega_n} \binom{q}{k-a} \binom{p}{a} \sum_{j=0}^k  \binom{k}{j} S_{j} \otimes S_{k-j}.
\end{align*}
The globalization of the left hand side in the kinematic formula is $\binom{q}{k-a} \binom{p}{a}A(S_k)$, which equals the globalization of the right hand side by \eqref{eq_kinematic_area}.

\item[(ii)] 
For the proof of additive kinematic formulas for $\SO(n)$, we first observe that the above argument remains the same unless we are in the exceptional case $2p=k=n-1$. In this case, it remains to compute the coefficient $\langle A_{1,p+1}^{\SO(n)}(\Phi_{n-1,\frac{n-1}{2}}),\Phi_{ex}^*\otimes\Phi_{ex}^*\rangle$. For that, we compute
\begin{align*}
\nonumber (\Phi_{ex}^*)^2&=\frac{1}{\omega_n^2 p!^2} u^2
\\&=\frac{(-1)^p(p+1)}{\omega_n^2 2^{2p} p!^2} x^{p}y^{p}
\\&=\frac{(-1)^p (p+1)}{2^{2p} \omega_n} \Phi_{n-1,\frac{n-1}{2}}^*
\end{align*}
from which the result follows. 

Finally, the additive kinematic formula for $\Phi_{ex}$ has to be a linear combination of $\Phi_{ex}\otimes\Phi_{k,0}$, $\Phi_{k,0}\otimes\Phi_{ex}$ and the formula follows directly.
\end{enumerate}
\end{proof}

\begin{corollary}
Let $0\leq p\leq n-k-1$ and consider the flag area measures $S_{k}^{(p)}$ introduced in \cite{hinderer_hug_weil}. Then,
\begin{displaymath}
A^{\SO(n)}_{1,p+1}(S_k^{(p)})=\frac{1}{\omega_{n-p}}\sum_{j=0}^k \binom{k}{j} S_j^{(p)}\otimes S_{k-j}^{(p)}.
\end{displaymath}
\end{corollary}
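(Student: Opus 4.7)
The plan is to deduce the corollary from the algebra isomorphism $\FlagArea^{(p),\SO(n),*}\cong\R[x,y]/\langle x^{p+1},y^{q+1}\rangle$ of Proposition \ref{prop_algebra_flag_case}, combined with Theorem \ref{thm_ftaig}, which identifies the algebra product with the transpose of the additive kinematic coproduct.

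The key identification I would establish is that $S_k^{(p)}$ corresponds, via duality, to the element $\frac{(x+y)^k}{k!\,\omega_{n-p}}$ in the algebra. By Lemma \ref{lemma_dual_omega}(i),
$$\frac{(x+y)^k}{k!\,\omega_{n-p}} = \frac{\omega_n}{\omega_{n-p}}\sum_{a=0}^{\min\{k,p\}}\binom{q}{k-a}\binom{p}{a}\,\Phi_{k,a}^*,$$
so this amounts to showing that the functional on the right-hand side agrees with $(S_k^{(p)})^*$. I would verify this either by comparing against the Steiner-type definition of $S_k^{(p)}$ in \cite{hinderer_hug_weil}, or by imitating the argument at the end of Section \ref{sec_bases_flag_case} (where the analogous identification $t=\omega_n S_1^*$ for ordinary area measures is confirmed by an explicit pairing), using Lemma \ref{lemma_glob_phi} to normalize the constant.

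With this identification in place, the algebraic product gives
$$(S_j^{(p)})^*\cdot(S_{k-j}^{(p)})^* = \frac{(x+y)^k}{j!\,(k-j)!\,\omega_{n-p}^2} = \frac{1}{\omega_{n-p}}\binom{k}{j}\,(S_k^{(p)})^*,$$
with no truncation since $k\leq n-p-1\leq n-1$ ensures $(x+y)^k\neq 0$. By Theorem \ref{thm_ftaig}, this algebra product equals the transpose of $A^{\SO(n)}_{1,p+1}$; pairing with $S_k^{(p)}$ and reading off the coefficients of $S_j^{(p)}\otimes S_{k-j}^{(p)}$ yields the claimed formula.

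The main obstacle is the first step, namely pinning down that the scalar factor in $(S_k^{(p)})^*$ is $1/(k!\omega_{n-p})$ rather than $1/(k!\omega_n)$, and verifying that $A(S_k^{(p)})$ has no components outside the span of $\{S_j^{(p)}\otimes S_{k-j}^{(p)}\}_j$. A completely explicit alternative route is to expand $S_k^{(p)} = \frac{\omega_{n-p}}{\omega_n}\sum_{a=0}^{\min\{k,p\}}\binom{k}{a}\binom{p}{a}^{-1}\binom{q}{k-a}^{-1}\Phi_{k,a}$ and then apply Theorem \ref{thm_kinformulas_phi}(i) term by term; the corollary then reduces to the factorial identity $\binom{q-k+j+c}{j-b}\binom{q}{k-j-c}=\binom{k-b-c}{j-b}\binom{q}{k-b-c}$ together with the multinomial rearrangement $\binom{k}{b+c}\binom{b+c}{c}\binom{k-b-c}{j-b}=\binom{k}{j}\binom{j}{b}\binom{k-j}{c}$, both verifiable directly from the factorial definitions.
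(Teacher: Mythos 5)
Your plan is in the right spirit, but the ingredient you are missing — and whose absence creates both obstacles you flag — is the fact that under the hypothesis $p\le n-k-1$ one has $m_k=\min\{p,k\}$, so that \eqref{eq_S_in_Phi} with $i=\min\{p,k\}$ collapses to the single term $a=0$; combined with \cite[Theorem~3]{abardia_bernig_dann} this gives $S_k^{(p)}=\frac{\omega_{n-p}}{\omega_n}\binom{q}{k}^{-1}\Phi_{k,0}$. This shows your ``alternative route'' expansion of $S_k^{(p)}$ as a sum over $a$ of $\Phi_{k,a}$'s is simply wrong, and it also shows that the dual element in $\R[x,y]/\langle x^{p+1},y^{q+1}\rangle$ that annihilates $\Phi_{k,a}$ for $a\ge 1$ and pairs to $1$ with $S_k^{(p)}$ is $\frac{\omega_n}{\omega_{n-p}}\binom{q}{k}\Phi_{k,0}^*=\frac{y^k}{k!\,\omega_{n-p}}$, not $\frac{(x+y)^k}{k!\,\omega_{n-p}}$ (the latter equals $\frac{\omega_n}{\omega_{n-p}}\glob^*S_k^*$ and has non-zero components along every $\Phi_{k,a}^*$). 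Because $\frac{(x+y)^j}{j!\,\omega_{n-p}}$ is not a dual basis element, pairing $A(S_k^{(p)})$ with it does \emph{not} isolate the coefficient of $S_j^{(p)}\otimes S_{k-j}^{(p)}$, so the ``read off the coefficients'' step is a genuine gap, as you yourself suspected.

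The gap is closed precisely by the proportionality $S_k^{(p)}\propto\Phi_{k,0}$: since $\Phi_{j,b}^*\cdot\Phi_{l,c}^*$ is proportional to $x^{b+c}y^{j+l-b-c}\propto\Phi_{j+l,b+c}^*$, the pairing $\langle S_k^{(p)},\Phi_{j,b}^*\cdot\Phi_{l,c}^*\rangle$ vanishes unless $j+l=k$ and $b=c=0$, which forces $A(S_k^{(p)})$ to lie in the span of $\Phi_{j,0}\otimes\Phi_{k-j,0}\propto S_j^{(p)}\otimes S_{k-j}^{(p)}$. Once this is in hand your pairing computation does give $\binom{k}{j}/\omega_{n-p}$. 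The paper takes a shorter route: it substitutes $S_k^{(p)}=\frac{\omega_{n-p}}{\omega_n}\binom{q}{k}^{-1}\Phi_{k,0}$ directly into the $a=0$ case of Theorem \ref{thm_kinformulas_phi}(i), converts each $\Phi_{j,0}$ back into $S_j^{(p)}$, and simplifies the binomials via $\binom{q}{k}^{-1}\binom{q-k+j}{j}\binom{q}{k-j}=\binom{k}{j}$. That avoids the dual algebra entirely at this stage; the hard work was already encoded in Theorem \ref{thm_kinformulas_phi}.
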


\begin{proof}
This follows from the case $a=0$ in Theorem \ref{thm_kinformulas_phi}, since 
\begin{displaymath}
S_k^{(p)}=\frac{\omega_{n-p}}{\omega_{n}}S_k^{(p),\min\{k,p\}}=\frac{\omega_{n-p}}{\omega_{n}} \binom{q}{k}^{-1} \Phi_{k,0}\end{displaymath}
by \cite[Theorem~3]{abardia_bernig_dann} and by \eqref{eq_S_in_Phi}. 
\end{proof}

\proof[Proof of Theorem \ref{thm_kin_S}]
By the previous results, it suffices to compute $A_{1,p+1}^{\O(n)}$. Indeed, for $2p=n-1$, by definition $\tilde S^{(\frac{n-1}{2})}_{\frac{n-1}{2}}=\Phi_{ex}$ and $\Phi_{n-1,\frac{n-1}{2}}=S_{n-1}^{(p),0}$, hence $A_{1,p+1}^{\SO(n)}$ can be directly deduced from $A_{1,p+1}^{\O(n)}$ and Theorem~\ref{thm_kinformulas_phi}.

By proceeding as in Theorem~\ref{thm_kinformulas_phi}, we have that the coefficient of $S_{j}^{(p),b}\otimes S_{l}^{(p),c}$ in $A^{\O(n)}_{1,p+1}(S_k^{(p),i})$ is given by
\begin{displaymath}
\langle S_{k}^{(p),i}, S_{j}^{(p),b,*} \cdot S_{l}^{(p),c,*}\rangle.
\end{displaymath}

Let 
\begin{displaymath}
S_{k}^{(p),i,*}=\sum_{a=m'_{k}-i}^{m_{k}}K_{k,i,a}x^ay^{k-a}
\end{displaymath}
with $K_{k,i,a}$ the constant given in Lemma~\ref{lemma_dual_omega}(ii) and let
\begin{displaymath}
x^ay^{k-a}=\tilde K_{k,a} \Phi_{k,a}^*
\end{displaymath}
with $\tilde K_{k,a}$ the constant obtained from Lemma~\ref{lemma_dual_omega}(i). 

Using the defined constants and \eqref{eq_S_in_Phi}, we have
\begin{align*}
\langle S&_{k}^{(p),i}, S_{j}^{(p),b,*} \cdot S_{l}^{(p),c,*}\rangle =
\left\langle S_{k}^{(p),i},\sum_{s=m'_{j}-b}^{m_{j}}\sum_{r=m'_{l}-c}^{m_{l}}K_{j,b,s}K_{l,c,r}\tilde K_{j+l,s+r}\Phi_{j+l,r+s}^{*}\right\rangle\\
& =\sum_{t=m'_{k}-m_{k}}^{m'_{k}-i}\sum_{s=m'_{j}-b}^{m_{j}}\sum_{r=m'_{l}-c}^{m_{l}}c_{n,k,p,i}\binom{m'_{k}-t}{i}K_{j,b,s}K_{l,c,r}\tilde K_{j+l,s+r}\langle \Phi_{k,t},\Phi_{j+l,r+s}^*\rangle.
\end{align*}
Hence, the coefficient of $S_{j}^{(p),b}\otimes S_{l}^{(p),c}$ in $A^{\O(n)}_{1,p+1}(S_k^{(p),i})$ is zero unless $k=j+l$ and $t=s+r$ and the result follows by substituting the constants and taking into account the possible range for $t,s$ and $r$.
\endproof

\def\cprime{$'$}


\begin{thebibliography}{10}

\bibitem{abardia_bernig_dann}
Judit Abardia-Ev\'{e}quoz, Andreas Bernig, and Susanna Dann.
\newblock Flag area measures.
\newblock {\em Mathematika}, 65(4):958--989, 2019.

\bibitem{alesker_mcullenconj01}
Semyon Alesker.
\newblock {Description of translation invariant valuations on convex sets with
  solution of P. McMullen's conjecture.}
\newblock {\em Geom. Funct. Anal.}, 11(2):244--272, 2001.

\bibitem{alvarez_fernandes}
J.~C. {\'A}lvarez~Paiva and E.~Fernandes.
\newblock Gelfand transforms and {C}rofton formulas.
\newblock {\em Selecta Math. (N.S.)}, 13(3):369--390, 2007.

\bibitem{berline_getzler_vergne}
Nicole Berline, Ezra Getzler, and Mich{\`e}le Vergne.
\newblock {\em Heat kernels and {D}irac operators}.
\newblock Grundlehren Text Editions. Springer-Verlag, Berlin, 2004.
\newblock Corrected reprint of the 1992 original.

\bibitem{bernig_sun09}
Andreas Bernig.
\newblock {A Hadwiger type theorem for the special unitary group.}
\newblock {\em Geom. Funct. Anal.}, 19:356--372, 2009.

\bibitem{bernig_g2}
Andreas Bernig.
\newblock Integral geometry under {$G_2$} and {${\rm Spin}(7)$}.
\newblock {\em Israel J. Math.}, 184:301--316, 2011.

\bibitem{bernig_qig}
Andreas Bernig.
\newblock {Invariant valuations on quaternionic vector spaces}.
\newblock {\em J. Inst. Math. Jussieu}, 11:467--499, 2012.

\bibitem{bernig_dual_area_measures}
Andreas Bernig.
\newblock Dual area measures and local additive kinematic formulas.
\newblock {\em Geom. Dedicata}, 203(1):85--110, 2019.

\bibitem{bernig_fu06}
Andreas Bernig and Joseph H.~G. Fu.
\newblock Convolution of convex valuations.
\newblock {\em Geom. Dedicata}, 123:153--169, 2006.

\bibitem{bernig_fu_hig}
Andreas Bernig and Joseph H.~G. Fu.
\newblock Hermitian integral geometry.
\newblock {\em Ann. of Math.}, 173:907--945, 2011.

\bibitem{bernig_fu_solanes}
Andreas Bernig, Joseph H.~G. Fu, and Gil Solanes.
\newblock Integral geometry of complex space forms.
\newblock {\em Geom. Funct. Anal.}, 24(2):403--492, 2014.

\bibitem{bernig_fu_solanes_proceedings}
Andreas Bernig, Joseph H.~G. Fu, and Gil Solanes.
\newblock Dual curvature measures in {H}ermitian integral geometry.
\newblock In {\em Analytic aspects of convexity}, volume~25 of {\em Springer
  INdAM Ser.}, pages 1--17. Springer, Cham, 2018.

\bibitem{bernig_hug}
Andreas Bernig and Daniel Hug.
\newblock Kinematic formulas for tensor valuations.
\newblock {\em J. Reine Angew. Math.}, 736:141--191, 2018.

\bibitem{bernig_solanes}
Andreas Bernig and Gil Solanes.
\newblock {Classification of invariant valuations on the quaternionic plane.}
\newblock {\em J. Funct. Anal.}, 267:2933--2961, 2014.

\bibitem{bernig_solanes17}
Andreas Bernig and Gil Solanes.
\newblock Kinematic formulas on the quaternionic plane.
\newblock {\em Proc. Lond. Math. Soc. (3)}, 115(4):725--762, 2017.

\bibitem{bernig_voide}
Andreas Bernig and Floriane Voide.
\newblock Spin-invariant valuations on the octonionic plane.
\newblock {\em Israel J. Math.}, 214(2):831--855, 2016.

\bibitem{borel49}
Armand Borel.
\newblock Some remarks about {L}ie groups transitive on spheres and tori.
\newblock {\em Bull. Amer. Math. Soc.}, 55:580--587, 1949.

\bibitem{boroczky_domokos_solanes}
Karoly~J. B\"or\"oczky, M\'aty\'as Domokos, and Gil Solanes.
\newblock Dimension of the space of unitary equivariant translation invariant
  tensor valuations.
\newblock Preprint arXiv:2003.04649.

\bibitem{broecker_tomdieck}
Theodor Br{\"o}cker and Tammo tom Dieck.
\newblock {\em Representations of compact {L}ie groups}, volume~98 of {\em
  Graduate Texts in Mathematics}.
\newblock Springer-Verlag, New York, 1995.
\newblock Translated from the German manuscript, Corrected reprint of the 1985
  translation.

\bibitem{federer_book}
Herbert Federer.
\newblock {\em Geometric measure theory}.
\newblock Die Grundlehren der mathematischen Wissenschaften, Band 153.
  Springer-Verlag New York Inc., New York, 1969.

\bibitem{fu90}
Joseph H.~G. Fu.
\newblock Kinematic formulas in integral geometry.
\newblock {\em Indiana Univ. Math. J.}, 39(4):1115--1154, 1990.

\bibitem{fulton_harris91}
William Fulton and Joe Harris.
\newblock {\em {Representation theory. A first course.}}
\newblock {Graduate Texts in Mathematics. 129. New York etc.: Springer-Verlag,.
  xv, 551 p., 144 ill. }, 1991.

\bibitem{haberl10}
Christoph Haberl.
\newblock Minkowski valuations intertwining with the special linear group.
\newblock {\em J. Eur. Math. Soc. (JEMS)}, 14(5):1565--1597, 2012.

\bibitem{hinderer_hug_weil}
Wolfram Hinderer, Daniel Hug, and Wolfgang Weil.
\newblock Extensions of translation invariant valuations on polytopes.
\newblock {\em Mathematika}, 61(1):236--258, 2015.

\bibitem{hug_schneider_localtensor}
Daniel Hug and Rolf Schneider.
\newblock Local tensor valuations.
\newblock {\em Geom. Funct. Anal.}, 24(5):1516--1564, 2014.

\bibitem{hug_schneider_schuster_a}
Daniel Hug, Rolf Schneider, and Ralph Schuster.
\newblock The space of isometry covariant tensor valuations.
\newblock {\em Algebra i Analiz}, 19(1):194--224, 2007.

\bibitem{hug_schneider_schuster_b}
Daniel Hug, Rolf Schneider, and Ralph Schuster.
\newblock Integral geometry of tensor valuations.
\newblock {\em Adv. in Appl. Math.}, 41(4):482--509, 2008.

\bibitem{hug_tuerk_weil}
Daniel Hug, Ines T{\"u}rk, and Wolfgang Weil.
\newblock Flag measures for convex bodies.
\newblock In {\em Asymptotic geometric analysis}, volume~68 of {\em Fields
  Inst. Commun.}, pages 145--187. Springer, New York, 2013.

\bibitem{hug_weis17}
Daniel Hug and Jan~A. Weis.
\newblock Crofton formulae for tensor-valued curvature measures.
\newblock In {\em Tensor valuations and their applications in stochastic
  geometry and imaging}, volume 2177 of {\em Lecture Notes in Math.}, pages
  111--156. Springer, Cham, 2017.

\bibitem{Kiderlen_Vedel_Jensen}
Markus Kiderlen and Eva B. Vedel~Jensen (eds.).
\newblock {\em Tensor Valuations and Their Applications in Stochastic Geometry
  and Imaging}.
\newblock Springer Lecture notes in mathematics, 2017.

\bibitem{ludwig_2005}
Monika Ludwig.
\newblock Minkowski valuations.
\newblock {\em Trans. Amer. Math. Soc.}, 357(10):4191--4213 (electronic), 2005.

\bibitem{montgomery_samelson43}
Deane Montgomery and Hans Samelson.
\newblock Transformation groups of spheres.
\newblock {\em Ann. of Math. (2)}, 44:454--470, 1943.

\bibitem{rumin94}
Michel Rumin.
\newblock {Differential forms on contact manifolds. (Formes diff{\'e}rentielles
  sur les vari{\'e}t{\'e}s de contact.)}.
\newblock {\em J. Differ. Geom.}, 39(2):281--330, 1994.

\bibitem{schneider75}
Rolf Schneider.
\newblock Kinematische {B}er{\"u}hrma{\ss}e f{\"u}r konvexe {K}{\"o}rper und
  {I}ntegralrelationen f{\"u}r {O}berfl{\"a}chenma{\ss}e.
\newblock {\em Math. Ann.}, 218(3):253--267, 1975.

\bibitem{schneider78}
Rolf Schneider.
\newblock Curvature measures of convex bodies.
\newblock {\em Ann. Mat. Pura Appl. (4)}, 116:101--134, 1978.

\bibitem{schneider_book14}
Rolf Schneider.
\newblock {\em Convex bodies: the {B}runn-{M}inkowski theory}, volume 151 of
  {\em Encyclopedia of Mathematics and its Applications}.
\newblock Cambridge University Press, Cambridge, expanded edition, 2014.

\bibitem{schuster_wannerer18}
Franz Schuster and Thomas Wannerer.
\newblock Minkowski valuations and generalized valuations.
\newblock {\em J. Eur. Math. Soc. (JEMS)}, 20(8):1851--1884, 2018.

\bibitem{schuster_wannerer}
Franz~E. Schuster and Thomas Wannerer.
\newblock {${\rm GL}(n)$} contravariant {M}inkowski valuations.
\newblock {\em Trans. Amer. Math. Soc.}, 364(2):815--826, 2012.

\bibitem{solanes}
Gil Solanes.
\newblock Contact measures in isotropic spaces.
\newblock {\em Adv. Math.}, 317:645--664, 2017.

\bibitem{wannerer_area_measures}
Thomas Wannerer.
\newblock Integral geometry of unitary area measures.
\newblock {\em Adv. Math.}, 263:1--44, 2014.

\bibitem{wannerer_unitary_module}
Thomas Wannerer.
\newblock The module of unitarily invariant area measures.
\newblock {\em J. Differential Geom.}, 96(1):141--182, 2014.

\end{thebibliography}
\end{document}